\newtheorem{theorem}{Theorem}
\newtheorem{lemma}[theorem]{Lemma}
\newtheorem{proposition}[theorem]{Proposition}
\newtheorem{corollary}[theorem]{Corollary}
\newtheorem{remar}[theorem]{Remark}
\newenvironment{remark}{\begin{remar}\rm}{\end{remar}}
\newcommand{\bfind}[1]{\index{#1}{\bf #1}}
\newcommand{\n}{\par\noindent}
\newcommand{\sn}{\par\smallskip\noindent}
\newcommand{\mn}{\par\medskip\noindent}
\newcommand{\bn}{\par\bigskip\noindent}
\newcommand{\pars}{\par\smallskip}
\newcommand{\parm}{\par\medskip}
\newcommand{\parb}{\par\bigskip}
\newcommand{\sep}{^{\rm sep}}
\newcommand{\chara}{\mbox{\rm char}\,}
\newcommand{\ic}{\mbox{\rm IC}\,}
\newcommand{\kras}{\mbox{\rm kras}}
\newcommand{\Kras}{\mbox{\rm Kras}}
\newcommand{\Gal}{\mbox{\rm Gal}\,}
\newcommand{\rme}{\mbox{\rm e}\,}
\newcommand{\rmf}{\mbox{\rm f}\,}
\newcommand{\eu}{\mathfrak}
\newcommand{\N}{\mathbb N}
\newcommand{\cal}{\mathcal}
\begin{document}
\title{Tame Key polynomials}


\author{Arpan Dutta and Franz-Viktor Kuhlmann}

\address{Department of Mathematics, IISER Mohali, Knowledge City, Sector 81, Manauli PO,
SAS Nagar, Punjab, India, 140306.}
\email{arpan.cmi@gmail.com}

\address{Institute of Mathematics, ul.~Wielkopolska 15, 70-451 Szczecin, Poland}
\email{fvk@usz.edu.pl}

\date{12.\ 7.\ 2022}

\thanks{The first author is supported by a Post-Doctoral Fellowship of the National Board of 
Higher Mathematics, India. The second author is supported by Opus grant 2017/25/B/ST1/01815 
from the National Science Centre of Poland.}

\keywords{Valuation, minimal pairs, key polynomials, pseudo Cauchy sequences, valuation
transcendental extensions, tame fields}
\subjclass[2010]{12J20, 13A18, 12J25}

\begin{abstract}
We introduce a new method of constructing complete sequences of key polynomials for simple
extensions of tame fields. In our approach the key polynomials are taken to be the minimal 
polynomials over the base field of suitably constructed elements in its algebraic closure,
with the extensions generated by them forming an
increasing chain. In the case of algebraic extensions, we generalize the results to 
countably generated infinite tame extensions over henselian but not necessarily tame fields.
In the case of transcendental extensions, we demonstrate the central role that is played by
the implicit constant fields, which reveals the tight connection with the algebraic case.
\end{abstract}

\maketitle
%
%
%
\section{Introduction}
In this paper we will work with (Krull) valuations on fields and their extensions to
rational function fields. Note that we {\it always identify equivalent valuations}.
For basic information on valued fields and for notation, see Section~\ref{sectprel}.
The value group $v(L^\times)$ of a valued field $(L,v)$ will be denoted by $vL$, and its
residue field by $Lv$. The value of an element $a$ 
will be denoted by $va$, and its residue by $av$.

Take a valued field $(K,v)$. It is an important task to describe, analyze and 
classify all extensions of the valuation $v$ from $K$ to the rational function
field $K(x)$. In order to be able to compute the value of every element of $K(x)$ 
with respect to $v$, it suffices to be able to compute
the value of all polynomials in $x$, that is, we only have to deal with the polynomial
ring $K[x]$. Indeed, if $f,g\in K[x]$, then necessarily, $v\frac f g =vf-vg$. We know the
values of all elements in $K$. If in addition we know the 
value $vx$, then everything would be easy if for every polynomial
\begin{equation}                                   \label{pol}
f(x)\>=\>\sum_{i=0}^n c_ix^i\in K[x]
\end{equation}
the following equation would hold:
\begin{equation}                                            \label{minval}
vf(x)\>=\> \min_{0\leq i\leq n} vc_i + ivx\>.
\end{equation}
We can define valuations on $K(x)$ in this way by choosing $vx$ to be any 
element in some ordered abelian group which contains $vK$. If we choose $vx=0$, we 
obtain the \bfind{Gau{\ss} valuation}.

But what if Equation~(\ref{minval}) does not always hold? Then there are 
{\it polynomials of unexpected value}, the value of which is larger than the minimum 
of the values of its monomials. This observation has led to the theory of {\it key
polynomials}, on which by now an abundant number of articles are available. Several of 
them present {\it complete sequences of key polynomials} in order to describe, or 
construct, all extensions.

In the present paper, we add a new aspect. When working over tame fields $(K,v)$,
we are able to prove stronger results than those for the case of general valued base 
fields. An algebraic extension $(L|K,v)$ is called \bfind{tame} if it is 
\bfind{unibranched}, i.e., the extension of $v$ from $K$ to $L$ is unique, 
and every finite subextension $E|K$ of $L|K$ satisfies the following conditions:
\sn
(TE1) the ramification index $(vE:vK)$ is not divisible by $\chara Kv$.
\n
(TE2) the residue field extension $Ev|Kv$ is separable.
\n
(TE3) the extension $(E|K,v)$ is \bfind{defectless}, i.e.,
\[
[E:K]\>=\>(vE:vK)[Ev:Kv]\>.
\]
A henselian field $(K,v)$ is called a \bfind{tame field} if its algebraic closure 
$\widetilde{K}$ with 
the unique extension of the valuation is a tame extension. The Lemma of Ostrowski
(see \cite{En,[R1]}) shows that every henselian valued field of residue characteristic $0$ 
is a tame field.

For the formulation of our main theorems we will need some more definitions. For an 
arbitray extension $(K(x)|K,v)$, we set 
\[
v(x-K)\>:=\> \{v(x-c)\mid c\in K\}\>. 
\]

A transcendental extension $(K(x)|K,v)$ is \bfind{valuation algebraic} if 
$vK(x)/vK$ is a torsion group and the residue field extension $K(x)v|Kv$ is algebraic;
otherwise, it is called \bfind{valuation transcendental}. In \cite{KTrans} we introduced
the notion of an extension $(K(x)|K,v)$ being weakly pure; here we will give a simpler, but
equivalent, definition (for the equivalence, see Section~\ref{sectwp}). We say that the 
extension $(K(x)|K,v)$ is \bfind{weakly pure} ({\bf in} $x$) if there is $a\in K$ 
such that $v(x-a)=\max v(x-\widetilde{K})$, or $x$ is limit of a pseudo Cauchy sequence in 
$(K,v)$ of transcendental type. For background on pseudo Cauchy sequences, see \cite{Ka}.

\pars
Given an arbitrary simple extension $(K(x)|K,v)$ and a polynomial $f\in K[X]$, then 
we define
\begin{equation}                               \label{deltax}   
\delta(f)\>:=\> \max\{ v(x-a)\mid a \mbox{ is a root of } f \}\>. 
\end{equation}
A root $a$ of $f$ such that $\delta(f) = v(x-a)$ is said to be a \bfind{maximal root of $f$}. 
A monic polynomial $Q(X)\in K[X]$ is said to be a \bfind{key polynomial for $(K(x)|K,v)$} if 
\[ 
\deg f < \deg Q \>\Rightarrow\> \delta(f) < \delta(Q) \mbox{ for all } f\in K[X]\>. 
\]
Further, a sequence $(Q_i)_{i\in S}$ of key polynomials is said to be \bfind{complete} if
for every $f\in K[X]$ there exists $i\in S$ such that $\deg Q_i \leq \deg f$ and 
$\delta(f) \leq \delta(Q_i)$. 

\pars
Given any extension $(F|K,v)$, we take $\ic(F|K,v)$ to be the relative algebraic closure 
of $K$ in a fixed henselization of $(F,v)$ and call it the \bfind{implicit constant
field} of $(F|K,v)$. Since the henselization $F^h$ of any valued field $(F,v)$ is unique 
up to valuation preserving isomorphism over $F$, the implicit constant field
is unique up to valuation preserving isomorphism over~$K$. In \cite{KTrans} the notion of 
``weakly pure'' is instrumental in constructing an extension of $v$ from $K$ to the rational
function field $K(x)$ such that $\ic(F|K,v)$ is equal to any given countably generated 
separable-algebraic extension of $K$.

\pars
The following is our main theorem for the case of simple transcendental extensions:
\begin{theorem}                         \label{MT1}
Let $(K,v)$ be a tame valued field and $(K(x)|K,v)$ a transcendental extension. 
Then there exist monic irreducible polynomials $\{Q_i\}_{i\in S}$ 
and $\{Q_\nu\}_{\nu\in\Omega}$ over 
$K$, where $S$ is an initial segment of $\N$, $\Omega=\emptyset$ or $\Omega= 
\{ \nu|\nu<\lambda \}$ for some limit ordinal $\lambda$, having the following 
properties:
\sn 
(i) \ $\{ Q_i \}_{i\in S} \cup \{Q_\nu\}_{\nu\in\Omega}$ forms a complete sequence of key 
polynomials for $(K(x)|K,v)$,
\sn 
(ii) \ $\deg Q_1=1$,
\sn 
(iii) \ there exist unique maximal roots $a_i$ of $Q_i$ and $z_\nu$ of $Q_\nu\,$, 
\sn
(iv) \ $K(a_{i-1}) \subsetneq K(a_i)$ and $\deg Q_{i-1} < \deg Q_i$ for $1<i\in S$,
\sn
(v) \ $v(x-a_i)>v(x-a_{i-1})$ for $1<i\in S$,
\sn
(vi) \ $v(x-a_i)=\max v(x-K(a_i))$ if $i\in S$ is not its last element or 
$\Omega=\emptyset$,
\sn
(vii) \ $v(x-a_n)=\max v(x-\widetilde{K})$ if $S= \{1,\dotsc,n\}$ and $\Omega=\emptyset$,
\sn
(viii) \ if $\Omega\neq\emptyset$, then $S$ is finite, and if $n$ is its last element, then 
$(z_\nu)_{\nu<\lambda}$ is a pseudo Cauchy sequence of transcendental type in 
$(K(a_n),v)$ and we have that $\deg Q_n = \deg Q_\nu$ for all $\nu\in\Omega$,
\sn
(ix) \ \ $\ic(K(x)|K,v)=K(a_i\mid i\in S)$, which is equal to $K(a_n)$ if 
$S= \{1,\dotsc,n\}$,
\sn
(x) \ for each $k\in S$, $\{ Q_i\}_{ 1\leq i\leq k}$ forms a complete sequence of key 
polynomials for $(K(a_k)|K,v)$.
\pars
With $L:= \ic(K(x)|K,v)$, the extension $(L(x)|L,v)$ is weakly pure.
The extension $(K(x)|K,v)$ is valuation algebraic if and only if
$S=\N$ or $\Omega\neq \emptyset$. In both cases, the extension $(L(x)|L,v)$ is immediate.
In the case of $S=\N$, $(a_i)_{i\in\N}$ is a pseudo Cauchy sequence in $(L,v)$ of
transcendental type with $x$ as its limit.
\end{theorem}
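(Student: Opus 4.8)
The plan is to obtain Theorem~\ref{MT1} from a recursive construction of the elements $a_i$ inside the implicit constant field $L:=\ic(K(x)|K,v)$, and then to read off the four closing assertions from the structure of $L$ together with Kaplansky's theory of pseudo Cauchy sequences \cite{Ka}. Observe first that, since $L|K$ is algebraic and $(K,v)$ is tame, $(L,v)$ is again a tame field, $L|K$ is separable, $\widetilde L=\widetilde K$, $vL/vK$ is torsion and $Lv|Kv$ is algebraic; these facts carry the degree- and rank-theoretic bookkeeping throughout. Also, since $L$ is by definition the relative algebraic closure of $K$ in $K(x)^h$, it is relatively algebraically closed there, and as $L\subseteq K(x)^h$ we get $L(x)^h=K(x)^h$, hence $\ic(L(x)|L,v)=L$.

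The construction itself (which yields (i)--(x)) would go as follows: take $a_1\in K$ realizing $\max v(x-K)$ when this maximum exists, and, as long as $(K(a_i)|K,v)$ is not weakly pure, produce $a_{i+1}\in\widetilde K$ with $K(a_i)\subsetneq K(a_{i+1})$, with $v(x-a_{i+1})>v(x-a_i)$, with $\deg Q_{i+1}>\deg Q_i$ where $Q_{i+1}$ is the minimal polynomial of $a_{i+1}$ over $K$, and with $a_{i+1}$ a maximal root of $Q_{i+1}$, while arranging the bookkeeping so that the chain eventually exhausts $L$. The main obstacle of the whole argument lies here: one must choose $a_{i+1}$ inside $L$ so that, in addition, no polynomial of degree smaller than $\deg Q_{i+1}$ realizes a $\delta$-value $\geq\delta(Q_{i+1})=v(x-a_{i+1})$ --- this is what makes $Q_{i+1}$ a key polynomial and gives (x) by induction on the degree --- and it is precisely here that tameness is indispensable, since the Lemma of Ostrowski and defectlessness force an element of $\widetilde K$ strictly closer to $x$ than all of $K(a_i)$ to generate an extension whose degree, ramification index and residue degree grow as predicted; the same input gives the uniqueness of the maximal root in (iii) and the equalities in (v)--(vii). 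The process stops exactly when $(K(a_n)|K,v)$ becomes weakly pure; if it never stops we are in the case $S=\N$, and the $\Omega$-part (viii) arises when $(K(a_n)|K,v)$ is weakly pure of pseudo Cauchy type, with $Q_\nu$ the minimal polynomials over $K$ of the terms of the associated sequence, re-indexed so that the degree has stabilised at $\deg Q_n$; completeness in (i) then follows from the standard correspondence between pseudo Cauchy sequences of transcendental type and key polynomials.

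Granting (i)--(x), I turn to the closing paragraph. Weak purity of $(L(x)|L,v)$ is read off case by case using $\ic(L(x)|L,v)=L$ and $\widetilde L=\widetilde K$: if $\Omega=\emptyset$ and $S=\{1,\dots,n\}$, then by (vii) and (ix) we have $a_n\in K(a_n)=L$ with $v(x-a_n)=\max v(x-\widetilde L)$; if $\Omega\neq\emptyset$, then by (viii)--(ix), $L=K(a_n)$ and $(z_\nu)_{\nu<\lambda}$ is a pseudo Cauchy sequence of transcendental type in $(L,v)$ whose pseudo limit is $x$, since the $z_\nu$ are maximal roots of key polynomials with strictly increasing $\delta$-values; and if $S=\N$, then (v) gives $v(x-a_i)<v(x-a_{i+1})$, so $v(a_{i+1}-a_i)=v(x-a_i)$ is strictly increasing, making $(a_i)_{i\in\N}$ a pseudo Cauchy sequence in $(L,v)$ with pseudo limit $x$; as $x$ is transcendental over $L$, this sequence must be of transcendental type, because a pseudo Cauchy sequence of algebraic type has all its pseudo limits algebraic \cite{Ka}. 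This simultaneously proves the last sentence of the theorem and weak purity in the case $S=\N$.

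It remains to prove the valuation-algebraic dichotomy, which I would first reduce to a statement over $L$: because $L|K$ is algebraic, $(K(x)|K,v)$ is valuation algebraic if and only if $(L(x)|L,v)$ is (this is purely transcendence-theoretic, using that $vL/vK$ is torsion and $Lv|Kv$ is algebraic). If $\Omega\neq\emptyset$ or $S=\N$, then by the previous paragraph $x$ is the pseudo limit of a pseudo Cauchy sequence in $L$ of transcendental type, so $(L(x)|L,v)$ is immediate by \cite{Ka} --- this gives the ``in both cases'' assertion --- and immediate extensions are valuation algebraic, hence so is $(K(x)|K,v)$. Conversely, suppose $\Omega=\emptyset$ and $S=\{1,\dots,n\}$; passing to $(L(x)|L,v)$ and replacing $x$ by $y:=x-a_n\in L(x)$, we may assume $vy=\max v(y-\widetilde L)$. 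If $vy$ is non-torsion over $vL$ then $(L(x)|L,v)$ is already valuation transcendental; otherwise pick $\pi\in\widetilde L$ with $v\pi=vy$ and note that maximality of $vy$ forces $v(y-a)=vy$, i.e. $(a/y)v\neq 1$, for every $a\in\widetilde L$ with $va=vy$, so $(y/\pi)v$ cannot lie in the algebraically closed field $\widetilde Lv$ and is therefore transcendental over $\widetilde Lv$; hence $(L(\pi)(x)|L(\pi),v)$, and with it $(L(x)|L,v)$ and $(K(x)|K,v)$, is valuation transcendental. The only delicate points here are this last residue-field computation and making the transcendence-theoretic reduction from $K$ to $L$ precise.
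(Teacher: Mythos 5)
Your high-level plan is the paper's plan: build a chain $a_1,a_2,\dots$ inside $L=\ic(K(x)|K,v)$, take $Q_i$ to be the minimal polynomial of $a_i$ over $K$, stop when weak purity of $(K(a_n)(x)|K(a_n),v)$ is reached, and in the pseudo Cauchy case index the approximants by $\Omega$. Your closing paragraph (weak purity of $(L(x)|L,v)$, the valuation-algebraic dichotomy, the reduction to $L$, and the residue computation showing valuation transcendence when $v(x-a_n)=\max v(x-\widetilde K)$) is sound and is essentially a self-contained re-derivation of the paper's Lemma~\ref{WP} plus the standard Kaplansky facts.

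However, there is a genuine gap precisely at the point you yourself flag as ``the main obstacle.'' You need (a) to construct the next element $a_{i+1}$ from $a_i$ and (b) to prove the \emph{key polynomial} property: any $z\in\widetilde K$ with $v(x-z)>v(x-a_i)$ satisfies $[K(z):K]\ge[K(a_{i+1}):K]$, and likewise the uniqueness of the maximal root in (iii). Saying that ``tameness is indispensable, since the Lemma of Ostrowski and defectlessness force an element strictly closer to $x$\dots to generate an extension whose degree, ramification index and residue degree grow as predicted'' is a description of what must be shown, not an argument. The Lemma of Ostrowski and defectlessness by themselves give equality $[E:K]=(vE:vK)[Ev:Kv]$; they do not, by themselves, tell you that a better approximant must have at least the degree of $a_{i+1}$. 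The paper closes this gap with the Krasner-constant machinery: one constructs $a_{i+1}$ so that $a_{i+1}-a_i$ is a \emph{homogeneous approximation} of $x-a_i$ (Lemmas~\ref{lstronghom} and \ref{exkrap}, which are where tameness of the extension really enters, via (TE1)--(TE2) applied to $z-a_i$), and then proves $\Kras(a_{i+1},K)=v(x-a_i)$ (Lemma~\ref{KS4,5lem}). Combined with the Krasner-type Proposition~\ref{vkras}, this yields $a_{i+1}\in K(z)^h=K(z)$ for any $z$ with $v(x-z)>v(x-a_i)$, which is exactly the degree bound (KS6) underlying (i), (iii), and (x); the same inequality $\Kras(a_{i+1},K)<v(x-a_{i+1})$ gives the uniqueness of the maximal root. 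None of this appears, even in outline, in your proposal. Similarly, the assertion in (viii) that $\deg Q_\nu=\deg Q_n$ is claimed by ``re-indexing,'' but it is again a consequence of the same Krasner argument ($v(x-z_\nu)\ge v(x-a_n)>\Kras(a_n,K)$ forces $K(z_\nu)=K(a_n)$), not something you can arrange by choice of indices. You also do not handle the degenerate case in which $v(x-K)$ already has no maximum (then $S=\{1\}$, $a_1=0$, and $x$ is the limit of a transcendental-type pseudo Cauchy sequence in $K$ by algebraic maximality of tame fields). In short: the skeleton matches the paper, but the Krasner-constant/homogeneous-approximation core that makes (i), (iii), (viii), and (x) true is missing, and invoking tameness at the level of generality you do does not supply it.
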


From assertion (ix) we conclude:
\begin{corollary}                         \label{MT1cor}
Let $(K,v)$ be a tame field and $(K(x)|K,v)$ a transcendental extension. Then
$\ic(K(x)|K,v)$ is a countably generated separable-algebraic extension of $K$.
\end{corollary}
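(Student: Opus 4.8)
The plan is to derive everything from assertion (ix) of Theorem~\ref{MT1}, which already identifies $\ic(K(x)|K,v)$ with the field $K(a_i\mid i\in S)$ generated by the maximal roots $a_i$ of the key polynomials $Q_i$. Since $S$ is an initial segment of $\N$, this field is a countable union of the simple extensions $K(a_k)$, hence countably generated as a field extension of $K$; thus only two things remain to be checked, that the extension is algebraic and that it is separable.

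For algebraicity, I would argue as follows: each $a_i$ is a root of the polynomial $Q_i\in K[X]$, so each $a_i$ is algebraic over $K$, and therefore $K(a_i\mid i\in S)$, being generated over $K$ by algebraic elements, is an algebraic extension of $K$. (If $S=\{1,\dots,n\}$ is finite, (ix) even gives $\ic(K(x)|K,v)=K(a_n)$, a simple algebraic extension, and the conclusion is immediate.)

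For separability, I would invoke the hypothesis that $(K,v)$ is a tame field. By definition a tame field is henselian and its algebraic closure $\widetilde K$ is a tame extension; in particular every finite subextension of $\widetilde K|K$ satisfies (TE2), and more to the point, a standard fact about tame fields is that they are \emph{separably tame} — the residue characteristic exponent does not divide the ramification index, residue extensions are separable, and there is no defect, which together force every algebraic extension of a tame field inside $\widetilde K$ to be separable over $K$ (equivalently, a tame field is closed under purely inseparable algebraic extensions, so $K=K^{1/p}$ when $\chara Kv=p>0$). Hence the algebraic extension $K(a_i\mid i\in S)|K$ is automatically separable. Combining this with the previous paragraph, $\ic(K(x)|K,v)=K(a_i\mid i\in S)$ is a countably generated separable-algebraic extension of $K$, as claimed.

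The only genuine subtlety — and the step I would be most careful about — is the separability claim: one must be sure that ``tame field'' as defined via condition (TE2) really does rule out inseparable algebraic extensions, i.e.\ that a tame field is perfect whenever its residue characteristic is positive. This is part of the standard structure theory of tame fields (see \cite{[R1]}); once it is cited, the corollary follows with no further computation.
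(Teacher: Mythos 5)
Your proof is correct, and the structure (read everything off assertion~(ix) of Theorem~\ref{MT1}) is the same as the paper's one-line derivation. Countable generation and algebraicity are immediate as you say, and your worry about separability is well placed but resolves the way you hope: if $\chara K = p > 0$ (hence $\chara Kv = p$) and $K(b)|K$ were purely inseparable of degree $p$, then $K(b)v|Kv$ is purely inseparable, so by (TE2) it is trivial; then (TE3) forces $(vK(b):vK)=p$, contradicting (TE1). Thus a tame field is perfect, so $\widetilde K = K\sep$ and the extension $K(a_i\mid i\in S)|K$ is separable.

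The one difference worth noting is the route to separability. You invoke perfectness of the tame base field, which is a property of $K$ alone. The argument the paper has on hand is more local: part 1) of Lemma~\ref{kspcs} shows directly that each $a_i$ lies in $K\sep$, a fact about the key sequence construction that holds even over merely henselian base fields. That version would yield the same corollary for Theorem~\ref{MT1gen} (with $\ic(K(x)|K,v)$ inside the tame extension $L'$, hence inside $K\sep$), whereas your version is tied to the tameness of $K$ itself. For the corollary as stated this makes no difference, but it explains why the paper can defer separability to a general lemma rather than to a structural fact about tame fields.
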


Instead of Theorem~\ref{MT1}, we will prove the following generalization:
\begin{theorem}                         \label{MT1gen}
Let $(K,v)$ be a henselian valued field and $(K(x)|K,v)$ a transcendental extension.  
Assume that there is a tame extension $(L'|K,v)$ such that for some extension of $v$
from $K(x)$ to $L'(x)$, the extension $(L'(x)|L',v)$ is weakly pure. Then the assertions 
of Theorem~\ref{MT1} hold, and $\ic(K(x)|K,v)\subseteq L'$.
\end{theorem}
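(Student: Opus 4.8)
The plan is to prove Theorem~\ref{MT1gen} by descending from the tame overfield $L'$ to the base field $K$, with the implicit constant field $L:=\ic(K(x)|K,v)$ as the bridge; Theorem~\ref{MT1} is recovered as the special case in which $K$ is tame, since then a suitable $L'$ — $K$ itself, or a finite tame extension of it — can be produced directly. Step~1 locates $L$. Fix the given extension of $v$ from $K(x)$ to $L'(x)$ inside a common algebraic closure. Since $K(x)\subseteq L'(x)$, the henselization $K(x)^h$ embeds over $K(x)$, compatibly with $v$, into $L'(x)^h$, and $E(x)^h=K(x)^h$ for any algebraic $E|K$ inside $K(x)^h$. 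An element of $\widetilde K$ lying in $K(x)^h$ thus lies in $L'(x)^h$, is algebraic over $L'$, and hence lies in $\ic(L'(x)|L',v)$; since $(L'(x)|L',v)$ is weakly pure, $\ic(L'(x)|L',v)=L'$ (weak purity of a transcendental extension of a henselian field leaves the base field relatively algebraically closed in its henselization). Therefore $L\subseteq L'$, so $L|K$ is a subextension of the tame extension $L'|K$ and is itself tame; in particular $(L,v)$ is henselian, $\widetilde L=\widetilde K$, $L(x)^h=K(x)^h$, and $\ic(L(x)|L,v)=L$. This gives the inclusion $\ic(K(x)|K,v)\subseteq L'$, and, together with Step~3, assertion~(ix).

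Step~2, which I expect to be the main obstacle, is to show that $(L(x)|L,v)$ is weakly pure; one must produce either $a\in L$ with $v(x-a)=\max v(x-\widetilde K)$, or a pseudo Cauchy sequence in $(L,v)$ of transcendental type with limit $x$. The idea is to transfer the relevant datum down from $L'$, using that $L'\cap K(x)^h=L$. If $(L'(x)|L',v)$ is weakly pure via some $a\in L'$ with $v(x-a)=\max v(x-\widetilde K)$, put $Q:=\min(a/K)$ and use the tameness of the finite extension $K(a)|K$ together with a Krasner-type estimate on the conjugates of $a$ to locate a root of $Q$ in $K(x)^h$ realizing $\max v(x-\widetilde K)$ — if necessary after perturbing by an element of $L$ of large enough value to keep the maximum while staying in $K(x)^h$ — so that it lands in $\ic(K(x)|K,v)=L$. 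If instead $x$ is a limit of a pseudo Cauchy sequence $(b_\nu)$ in $L'$ of transcendental type, replace each $b_\nu$, which is algebraic over $K$, by a sufficiently close approximant from $\ic(K(x)|K,v)$, using tameness to control the approximation so that the pseudo-limit relation to $x$ and the transcendental type survive. This step is exactly where the weaker hypothesis of Theorem~\ref{MT1gen} is converted into the hypothesis that fuels the argument of Theorem~\ref{MT1}.

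Step~3 builds the complete sequence of key polynomials over $K$. With $L|K$ tame and $(L(x)|L,v)$ weakly pure, one runs the same construction as when the base field is tame, with tameness of the finite subextensions of $L|K$ taking over the role of tameness of $K$: inductively produce $a_1,a_2,\dots\in L$ with $a_1\in K$, the degrees of $Q_i:=\min(a_i/K)$ strictly increasing, $K(a_{i-1})\subsetneq K(a_i)$, $v(x-a_i)>v(x-a_{i-1})$, each $a_i$ a maximal root of $Q_i$, and $v(x-a_i)=\max v(x-K(a_i))$ until $L$ is exhausted; the construction simultaneously shows that the $K(a_i)$ exhaust $L$, whence $S$ is an initial segment of $\N$ and $L|K$ is countably generated, which gives (ii)--(vi), (ix), (x). It stops with finite $S=\{1,\dots,n\}$ and $\Omega=\emptyset$ precisely when $v(x-a_n)=\max v(x-\widetilde K)$ is attained in $K(a_n)=L$ (the valuation-transcendental case), giving (vii); with finite $S$ and $\Omega\neq\emptyset$ when $(L(x)|L,v)$ is immediate via a transcendental-type pseudo Cauchy sequence in $K(a_n)=L$, whose approximants of common degree $\deg Q_n$ are the $Q_\nu$, giving (viii); and it runs through all of $\N$ when $(a_i)_{i\in\N}$ is itself a transcendental-type pseudo Cauchy sequence in $L$ with limit $x$. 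Completeness of $\{Q_i\}\cup\{Q_\nu\}$ for $(K(x)|K,v)$, and of $\{Q_i\}_{i\le k}$ for $(K(a_k)|K,v)$, follows from the choice of the $a_i$ as maximal roots realizing the largest available values at each degree level, so that no $f\in K[X]$ can escape the sequence; the final assertions about weak purity, valuation-algebraicity, immediacy of $(L(x)|L,v)$, and the pseudo Cauchy sequence in $(L,v)$ are read off from Steps~1 and~2.
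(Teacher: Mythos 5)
Your Step~1 is sound and matches the paper (it is essentially Lemma~\ref{3.7KTrans} together with part~3 of Lemma~\ref{WP2}), but your overall strategy inverts the paper's and runs into a genuine gap exactly where you flag it. The paper does \emph{not} first establish that $(L(x)|L,v)$ is weakly pure and then build the key polynomials over $K$; instead, Proposition~\ref{existcks} constructs the key sequence $(a_i)_{i\in S}$ for $(K(x)|K,v)$ directly, using $L'$ only as a reservoir in which the $a_i$ live, so that Lemma~\ref{WP2} guarantees at each step that $v(x-K(a_{i-1}))$ has a maximum not attained globally. Once the sequence is built and Theorem~\ref{thIC} identifies $K_{\eu S}$ with $L:=\ic(K(x)|K,v)$, weak purity of $(L(x)|L,v)$ falls out \emph{afterwards}, case by case (from (CKS1)/(CKS2) or Proposition~\ref{SNpure}). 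You instead try to deduce weak purity of $(L(x)|L,v)$ as a preliminary Step~2 by ``transferring the datum down from $L'$''.

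That Step~2 is not justified, and I don't see how to make it work without essentially redoing the paper's machinery. In the case that some $a\in L'$ satisfies $v(x-a)=\max v(x-\widetilde K)$, you propose to locate, ``if necessary after perturbing,'' a root of $\min(a/K)$ that lies in $L$ and still realizes the maximum. But nothing guarantees that $v(x-a)>\Kras(a,K)$, which is what Krasner's Lemma (Proposition~\ref{vkras}) would need to force $a$ into $K(x)^h$; and there is no reason a random perturbation by an element of $L$ preserves both the maximality of $v(x-\cdot)$ and membership in $L$. The paper's way of achieving exactly this control is the inductive construction through homogeneous approximations (Lemma~\ref{exkrap}, Lemma~\ref{KS4,5lem}), which is precisely what you are trying to avoid. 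The pcs subcase of your Step~2 has the same problem: asserting that each $b_\nu\in L'$ can be replaced by ``a sufficiently close approximant in $\ic(K(x)|K,v)$'' begs the question, since you would need to know that such approximants exist in $L$ with controlled distances — which is the content of the theorem, not a tool for proving it. Likewise, in Step~3 the claim that ``the construction simultaneously shows that the $K(a_i)$ exhaust $L$'' is not automatic; in the paper this is the substance of Theorem~\ref{thIC}. So your outline is more a plan of what one would like to be true than a proof: the hard work is concentrated in Step~2, and it is missing.
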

This theorem indeed implies Theorem~\ref{MT1} since if $(K,v)$ is a tame field, then the
extension $(\widetilde{K}|K,v)$ is tame, and by \cite[Proposition 5.2]{K66} the
extension $(\widetilde{K}(x)|\widetilde{K},v)$ is always weakly pure. 

We will construct the key polynomials $\{Q_i\}_{i\in S}$ by first constructing the 
sequence $\{a_i\}_{i\in S}$ and then taking $Q_i$ to be the minimal polynomial of $a_i$ 
over $K$. For this purpose, we revisit the notion of {\it homogeneous sequences} that was
introduced in \cite{KTrans} (see also \cite{BK}). We develop a stronger version, which we 
call {\it key sequences}, in Section~\ref{sectks}.
This will be done simultaneously for transcendental and algebraic simple extensions.
The latter leads to our main theorem for the algebraic case:

\begin{theorem}                      \label{MT2}
Let $(K,v)$ be a henselian valued field and $(K(x)|K,v)$ a tame algebraic
extension. Then there exist monic irreducible polynomials $\{Q_i\}_{i\in S}$ over 
$K$, where $S=\{1,\ldots,n\}$ for some $n\in\N$, having the following properties:
\sn 
(i) \ $\{Q_i\}_{i\in S}$ forms a complete sequence of key polynomials for $(K(x)|K,v)$,
\sn 
(ii) \ $\deg Q_1=1$, 
\sn 
(iii) \ there exist unique maximal roots $a_i$ of $Q_i\,$, 
\sn
(iv) \ $K(a_{i-1}) \subsetneq K(a_i)$ and $\deg Q_{i-1} < \deg Q_i$ for $1<i\in S$,
\sn
(v) \ $v(x-a_i)>v(x-a_{i-1})$ for $1<i\in S$,
\sn
(vi) \ $v(x-a_i)=\max v(x-K(a_i))$ for all $i\in S$,
\sn
(vii) \ $a_n=x$.
\sn
Further, also assertion (x) of Theorem~\ref{MT1} holds.
\end{theorem}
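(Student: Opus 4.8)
The plan is to deduce Theorem~\ref{MT2} as the algebraic shadow of the construction that proves Theorem~\ref{MT1gen}, running the same key-sequence machinery but now starting inside the algebraic extension $(K(x)|K,v)$ itself rather than inside a transcendental one. Concretely, I would first observe that since $(K(x)|K,v)$ is tame and $(K,v)$ is henselian, the extension is unibranched and defectless, so $v(x-\widetilde K)$ has a maximum; this is the analogue of the ``weakly pure'' hypothesis and provides the starting point. I would then construct the $a_i$ recursively exactly as in the transcendental case: set $a_1$ to be a best approximation to $x$ in $K$ (so $\deg Q_1=1$, giving (ii)), and having built $a_{i-1}$ with $K(a_{i-1})\subsetneq K(x)$, use the key-sequence results of Section~\ref{sectks} to produce $a_i$ with $v(x-a_i)>v(x-a_{i-1})$, $v(x-a_i)=\max v(x-K(a_i))$, and $K(a_{i-1})\subsetneq K(a_i)\subseteq K(x)$. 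Taking $Q_i$ to be the minimal polynomial of $a_i$ over $K$ gives monic irreducible polynomials with strictly increasing degrees (iv) and unique maximal roots (iii), by the uniqueness part of the key-sequence construction.

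The key structural point, and where the proof genuinely uses algebraicity, is termination: because each step forces $K(a_{i-1})\subsetneq K(a_i)\subseteq K(x)$ and $[K(x):K]<\infty$, the chain of fields $K(a_1)\subsetneq K(a_2)\subsetneq\cdots$ must stop after finitely many steps, so $S=\{1,\dots,n\}$ for some $n$. At the final stage the recursion can no longer produce a proper extension inside $K(x)$, and I would argue that this forces $a_n=x$ — i.e.\ $v(x-a)$ for $a$ ranging over $K(a_n)=K(x)$ is unbounded, which for a valued field means $x\in K(x)$ is approximated arbitrarily well by itself, the only possibility being $a_n=x$ (equivalently, $\delta$ of the minimal polynomial of $x$ is $\infty$ and $a_n$ realizes it). This gives (vii), and then (v) and (vi) hold automatically from the construction, the last one even for $i=n$ since $v(x-a_n)=v(0)=\infty=\max v(x-K(x))$.

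For completeness of the sequence, item (i), I would show that for every $f\in K[X]$ there is some $i$ with $\deg Q_i\le\deg f$ and $\delta(f)\le\delta(Q_i)$: given $f$, if $\deg f\ge\deg Q_n=[K(x):K]$ then $\delta(f)\le\infty=\delta(Q_n)$ and we take $i=n$; otherwise $\deg f<\deg Q_n$, and we pick the smallest $i$ with $\deg f<\deg Q_i$, so $\deg Q_{i-1}\le\deg f$ and, by the defining property of the key polynomial $Q_i$ (since $\deg f<\deg Q_i$ implies $\delta(f)<\delta(Q_i)$), we are done — using that $\delta(Q_{i-1})=v(x-a_{i-1})<v(x-a_i)=\delta(Q_i)$ together with the fact that no polynomial of degree below $\deg Q_i$ can beat $\delta(Q_i)$. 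Finally, for the last sentence (``assertion (x) of Theorem~\ref{MT1} holds''), I would note that for each $k\in S$ the truncated chain $a_1,\dots,a_k$ is itself a key sequence whose terminal field is $K(a_k)$, so the same argument applied inside $(K(a_k)|K,v)$ — which is again a tame algebraic extension — shows $\{Q_i\}_{1\le i\le k}$ is a complete sequence of key polynomials for it; this is really the assertion that the construction is ``self-similar'' under truncation, which should already be packaged in the key-sequence lemmas of Section~\ref{sectks}.

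The main obstacle I anticipate is not the termination argument (which is a clean finiteness bound) but verifying that the inductive step truly applies in the generality of Theorem~\ref{MT2}: one must check that at each stage the pair consisting of $x$ and the current approximant $a_{i-1}$ still satisfies the hypotheses needed to invoke the key-sequence construction — in particular that $(K(a_{i-1})(x)|K(a_{i-1}),v)$ inherits enough tameness/approximation structure — and that the maximal root $a_i$ can be chosen to lie in $K(x)$ rather than merely in $\widetilde K$. Handling the boundary case where the recursion terminates because $v(x-a_{i-1})$ already equals $\max v(x-\widetilde K)$ while $a_{i-1}\ne x$ would, if it could occur, contradict $v(x-\widetilde K)=\infty$ forced by $x$ being algebraic over the henselian $K$; ruling this out cleanly is the delicate point, and I expect it to follow from the defectlessness of $(K(x)|K,v)$ via (TE3).
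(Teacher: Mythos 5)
Your overall strategy matches the paper's: run the key-sequence construction (Proposition~\ref{existcks}) on the algebraic extension, let the finiteness of $[K(x):K]$ force termination, take $Q_i$ to be the minimal polynomial of $a_i$, and deduce the itemized properties from the key-sequence axioms (KS1)--(KS6) and Proposition~\ref{kskp}. Two of the steps, however, are not carried out correctly.

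The completeness argument (item (i)) has a genuine gap. You pick the smallest $i$ with $\deg f < \deg Q_i$, so $\deg Q_{i-1}\le \deg f$, and then invoke the key-polynomial property of $Q_i$ to get $\delta(f) < \delta(Q_i)$. But completeness requires $\delta(f)\le\delta(Q_{i-1})$, and $\delta(f)<\delta(Q_i)$ alone does not rule out $\delta(Q_{i-1})<\delta(f)<\delta(Q_i)$: nothing a priori forbids a polynomial of intermediate degree from having $\delta$ strictly between the two. What closes the gap is the stronger statement (KS6) of Proposition~\ref{KS5,6}: if $z\in\widetilde K$ with $v(x-z)>v(x-a_{i-1})$, then $[K(z):K]\ge[K(a_i):K]=\deg Q_i$. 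Applying this to a maximal root $z$ of $f$ yields $\deg f\ge [K(z):K]\ge\deg Q_i$, contradicting $\deg f<\deg Q_i$; hence $\delta(f)\le\delta(Q_{i-1})$. This is exactly how the paper argues in Proposition~\ref{kskp}, via the Krasner-type rigidity (KS5)--(KS6), not via the bare definition of ``key polynomial.''

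Second, you flag but do not resolve the concern that $a_i$ lands in $K(x)$ rather than merely in $\widetilde K$, and hence that the chain $K(a_1)\subsetneq K(a_2)\subsetneq\cdots$ really lives inside the finite extension $K(x)$ (which is what makes your termination argument bite). This is settled by Lemma~\ref{KSinL}: $K_{\eu S}\subseteq K(x)^h$, and since $(K,v)$ is henselian and $K(x)|K$ is algebraic, $K(x)^h=K(x)$. Similarly, your informal argument for (vii) (``$x$ is approximated arbitrarily well by itself'') is better replaced by the precise observation already built into the construction: once $x\in K(a_i)$, the maximality condition (KS3) forces $v(x-a_i)=\max v(x-K(a_i))=v(x-x)=\infty$, hence $a_i=x$; this is the content of (CKS1) in Proposition~\ref{existcks}.
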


\parm
In the case of a transcendental extension $(K(x)|K,v)$, the implicit constant field 
$L:= \ic(K(x)|K,v)$ can be 
an infinite extension of $K$ (cf.\ \cite[Proposition 3.16]{KTrans}). In the situation of 
Theorem~\ref{MT1}, it is generated by the elements $a_i\,$, $i\in\N$, and assertion (x) 
shows that for every $k\in\N$, $\{ Q_i\}_{ 1\leq i\leq k}$ forms a complete sequence of 
key polynomials for $(K(a_k)|K,v)$. So we are tempted to state that 
$\{ Q_i\}_{i\in S}$ forms a complete sequence of key polynomials for $(L|K,v)$.
However, so far our definition of key polynomials does not cover cases of extensions that
are not simple. In order to address the case of implicit constant fields that are infinite 
extensions of the base field, and more generally, countably generated algebraic extensions,
we generalize our definition in the following way. For the value defined in (\ref{deltax}) 
we will now write $\delta_x(f)$. A sequence $(Q_i)_{i\in S}$ of monic irreducible 
polynomials $Q_i(X)\in K[X]$, where $S$ is as before, will be said to be a \bfind{strongly
complete sequence of key polynomials for $(L|K,v)$} if the sequence $(\deg Q_i)_{i\in S}$ 
is strictly increasing and there are roots $a_i$ of $Q_i$ such 
that the following conditions hold:
\sn
(SCKP1) \ $L=K(a_i\mid i\in S)$, 
\sn
(SCKP2) \ for all $k\in S$, $(Q_i)_{i\leq k}$ is a complete sequence of key polynomials for 
$(K(a_k)|K,v)$.
\sn
It is not a priori clear that in the settings we considered so far, every complete sequence 
of key polynomials for the corresponding algebraic extensions is also strongly complete.
However, assertion (x) of Theorems~\ref{MT1} and~\ref{MT2} implies:
\begin{proposition}                                \label{SCS1}
1) If $\{Q_i\}_{i\in S}$ is a sequence of key polynomials for $(K(x)|K,v)$, then for every
$k\in S$, $\{Q_i\}_{1\leq i\leq k}$ is a strongly complete sequence of key polynomials for 
$(K(a_k)|K,v)$.
\sn
2) In the setting of Theorems~\ref{MT1} and~\ref{MT1gen}, with $L=\ic(K(x)|K,v)$, we have 
that $(Q_i)_{i\in S}$ is a strongly complete sequence of key polynomials for $(L|K,v)$. 
\sn
3) In the setting of Theorem~\ref{MT2}, $(Q_i)_{i\in S}$ is a strongly complete sequence 
of key polynomials for $(K(x)|K,v)$.
\end{proposition}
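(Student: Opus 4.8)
The plan is to observe that all three parts of Proposition~\ref{SCS1} are immediate consequences of unwinding the definition of a strongly complete sequence of key polynomials and reading off the relevant clauses of Theorems~\ref{MT1}, \ref{MT1gen} and~\ref{MT2}. Recall that for a sequence $(Q_i)_{i\in S}$ of monic irreducible polynomials, together with chosen roots $a_i$ of $Q_i$, to be strongly complete for an algebraic extension $(L|K,v)$ one needs exactly three things: that $(\deg Q_i)_{i\in S}$ is strictly increasing, that (SCKP1) $L=K(a_i\mid i\in S)$, and that (SCKP2) for every $k\in S$ the truncation $(Q_i)_{i\leq k}$ is a complete sequence of key polynomials for $(K(a_k)|K,v)$. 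In each case I would take the $Q_i$ to be the polynomials produced by the corresponding theorem and the $a_i$ to be the unique maximal roots guaranteed by assertion~(iii).

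First, for part~1 I would fix $k\in S$ and verify the three conditions for the truncated sequence $(Q_i)_{1\leq i\leq k}$ over $(K(a_k)|K,v)$. The strict monotonicity of the degrees is assertion~(iv). For (SCKP1) I would use the chain $K(a_1)\subsetneq K(a_2)\subsetneq\dotsb\subsetneq K(a_k)$, again from assertion~(iv), which gives $K(a_i\mid 1\leq i\leq k)=K(a_k)$ at once. Finally (SCKP2): for each $j\in S$ with $j\leq k$, assertion~(x) says precisely that $\{Q_i\}_{1\leq i\leq j}$ is a complete sequence of key polynomials for $(K(a_j)|K,v)$, which is (SCKP2) verbatim. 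Hence $(Q_i)_{1\leq i\leq k}$ is strongly complete for $(K(a_k)|K,v)$.

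For part~2, in the situation of Theorems~\ref{MT1} and~\ref{MT1gen} I would again check the three conditions for the full sequence $(Q_i)_{i\in S}$ over $(L|K,v)$ with $L=\ic(K(x)|K,v)$: strict monotonicity of degrees is assertion~(iv); condition (SCKP1), namely $L=K(a_i\mid i\in S)$, is exactly assertion~(ix); and condition (SCKP2) is exactly assertion~(x). Note that only the $a_i$, $i\in S$, enter here, so the extra polynomials $Q_\nu$, $\nu\in\Omega$, in the valuation-algebraic case are irrelevant, since assertion~(ix) asserts $L=K(a_i\mid i\in S)$. For part~3, in the situation of Theorem~\ref{MT2} we have $S=\{1,\dotsc,n\}$ and $a_n=x$ by assertion~(vii), so $K(a_i\mid i\in S)=K(a_n)=K(x)$, giving (SCKP1); strict monotonicity is assertion~(iv); and (SCKP2) is the instance of assertion~(x) that Theorem~\ref{MT2} also asserts, which for $k=n$ reduces to assertion~(i), completeness for $(K(x)|K,v)$ itself.

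There is essentially no hard step here; the content of the proposition lies entirely in Theorems~\ref{MT1}--\ref{MT2}, and in particular in their assertion~(x). The only point requiring a moment's care is to make sure the roots $a_i$ appearing in (SCKP1)/(SCKP2) are taken consistently to be the maximal roots from assertion~(iii), so that the chain of fields in assertion~(iv) and the implicit-constant-field identity in assertion~(ix) are literally statements about these $a_i$; and to recall that the property ``$(Q_i)_{i\leq k}$ is a complete sequence of key polynomials for $(K(a_k)|K,v)$'' depends only on the polynomials and on the extension $K(a_k)|K$, so that assertion~(x) can be quoted directly without any re-derivation.
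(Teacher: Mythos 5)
Your proposal is correct and follows exactly the route the paper intends: the paper states Proposition~\ref{SCS1} with only the remark that it follows from assertion~(x), and your proof supplies precisely the missing bookkeeping—reading strict degree growth and the field chain from assertion~(iv), $L=K(a_i\mid i\in S)$ from assertion~(ix) (resp.\ $K(a_n)=K(x)$ via assertion~(vii) for part~3), and (SCKP2) directly from assertion~(x), with the correct observation that only the $Q_i$, $i\in S$, and not the $Q_\nu$, $\nu\in\Omega$, matter for part~2.
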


The following is a generalization of Theorem~\ref{MT2} to the case of infinite algebraic
extensions:
\begin{theorem}                               \label{SCS2}
Take a henselian field $(K,v)$.
For every countably generated tame extension $(L|K,v)$ there exists a strongly complete
sequence of key polynomials $(Q_i)_{i\in S}$ such $\deg Q_1=1$ and also the following holds:
if $a_i\,$, $i\in S$, are the roots of the polynomials $Q_i$ that satisfy (SCKP1) and 
(SCKP2), then the following hold:
\sn
1) assertions (iv) and (x) of Theorem~\ref{MT2}, 
\sn
2) assertions (v) and (vi) of Theorem~\ref{MT2}, with $a_j$ in place of $x$ for any 
$j\in S$, $j>i$, 
\sn
3) each $a_i$ is the unique maximal root of $Q_i$ in the following sense: if $i\leq k\in S$
and $a'_i\ne a_i$ is another root of $Q_i$, then $\delta_{a_k}(Q_i)=v(a_k-a_i)>v(a_k-a'_i)$.
\end{theorem}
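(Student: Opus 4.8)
The plan is to build the sequence $(a_i)_{i\in S}$ recursively, exhausting a given countable generating set of $L|K$, while maintaining at each stage the full package of properties from Theorem~\ref{MT2}. First I would fix an enumeration $b_1,b_2,\dotsc$ of a countable generating set for $L|K$ and set $a_1$ to be any element of $K$ (so $\deg Q_1=1$, giving $v(b-a_1)=v(b-K)$-type control); the base field being henselian, every finite subextension of $L|K$ is tame and unibranched, which is what lets us speak of $\delta$ and maximal roots unambiguously inside $\widetilde K$. The recursion step is the heart of the matter: given $a_1,\dotsc,a_k$ with $K(a_k)\subsetneq$ (the field we are trying to reach), pick the first $b_j$ not yet in $K(a_k)$, and apply Theorem~\ref{MT2} to the \emph{simple} tame algebraic extension $(K(a_k)(b_j)\mid K(a_k),v)$. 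Actually, to keep the $Q_i$ defined over $K$ rather than over $K(a_k)$, I would instead apply Theorem~\ref{MT2} (or rather its proof, via the key sequences of Section~\ref{sectks}) to $(K(c)\mid K,v)$ where $c$ is a primitive element of $K(a_k,b_j)$ over $K$ chosen so that the chain of maximal-root approximants produced refines the chain $a_1,\dotsc,a_k$ already built. This refinement compatibility is the step I expect to be the main obstacle.

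To handle that obstacle I would prove a gluing lemma: if $(Q_i)_{1\le i\le k}$ is a complete sequence of key polynomials for $(K(a_k)|K,v)$ with maximal roots $a_i$, and $K(a_k)\subsetneq K(a_k,b)$ with the latter tame over $K$, then the complete sequence of key polynomials for $(K(a_k,b)|K,v)$ produced by Theorem~\ref{MT2} can be taken to \emph{extend} $(Q_i)_{i\le k}$, i.e.\ to have $Q_i$ as its first $k$ terms and new terms $Q_{k+1},\dotsc,Q_m$ of strictly larger degree. The point is that $\delta_{a_k}$-optimality of $a_k$ among roots of $Q_k$ is preserved when we enlarge the "target" from $a_k$ to $b$: by assertion (x) of Theorem~\ref{MT2} applied inside $K(a_k,b)$, the first $k$ polynomials of \emph{any} complete key-polynomial sequence for $(K(a_k,b)|K,v)$ already form a complete sequence for $(K(a_k)|K,v)$, and completeness plus the degree conditions pin down $Q_1,\dotsc,Q_k$ up to the choices we have already made. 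So the recursive construction of the $a_i$ via key sequences can be steered to respect the earlier choices; this is exactly the mechanism by which assertion (x) is leveraged, as Proposition~\ref{SCS1} already foreshadows in the simple case.

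Once the recursion is set up, the verification of the listed assertions is largely a bookkeeping exercise inheriting from Theorem~\ref{MT2}. Property (SCKP1) holds because the recursion never stalls while some $b_j$ remains outside the field generated so far (if it stalled, Theorem~\ref{MT2} applied to the relevant simple extension would produce a strictly longer sequence, contradicting maximality of the current stage), and every $b_j$ is eventually absorbed; (SCKP2) is precisely assertion (x) of Theorem~\ref{MT2} at each stage, preserved by the gluing lemma. Assertions 1) and 2) — the chain $K(a_{i-1})\subsetneq K(a_i)$, strictly increasing degrees, $v(a_j-a_i)>v(a_j-a_{i-1})$ and $v(a_j-a_i)=\max v(a_j-K(a_i))$ for $j>i$ — follow by applying assertions (iv),(v),(vi) of Theorem~\ref{MT2} to the simple tame extension $(K(a_j)|K,v)$, for which $(Q_i)_{i\le \text{index of }a_j}$ is by construction a complete key-polynomial sequence with the $a_i$ as maximal roots. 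Assertion 3) is the statement that within each finite stage $K(a_k)$ the root $a_i$ is the unique maximal one, which is exactly the uniqueness clause (iii) of Theorem~\ref{MT2} transported along the chain: $\delta_{a_k}(Q_i)=v(a_k-a_i)$ because $a_i$ is the maximal root of $Q_i$ with respect to the extension $(K(a_k)|K,v)$, and strict inequality $v(a_k-a_i)>v(a_k-a_i')$ for any other root $a_i'$ is the "unique maximal root" assertion for that extension. The only subtlety to flag is that we must check these statements are consistent across different $k$ — i.e.\ that enlarging $k$ does not disturb $\delta_{a_k}$-optimality of an earlier $a_i$ — which again reduces to the gluing lemma and the degree inequalities that force the comparison $\delta_{a_k}(f)<\delta_{a_k}(Q_i)$ for $\deg f<\deg Q_i$.
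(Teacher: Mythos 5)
Your overall architecture — enumerate generators of $L|K$, proceed by induction on finite subextensions, and splice together key sequences — is indeed the skeleton of the paper's ``alternative'' direct proof, and you correctly identify the refinement compatibility as the crux. But the way you propose to close that gap does not work, and the missing ingredient is precisely the content of the paper's construction.

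Your ``gluing lemma'' is justified by the claim that assertion (x) of Theorem~\ref{MT2} applied to $(K(a_k,b)|K,v)$ shows that the first $k$ polynomials of \emph{any} complete key-polynomial sequence for $(K(a_k,b)|K,v)$ already form a complete sequence for $(K(a_k)|K,v)$, and that ``completeness plus the degree conditions pin down $Q_1,\dotsc,Q_k$.'' Neither half of this holds. Assertion (x) says that the truncation $(Q_i)_{i\le k}$ is complete for $(K(\tilde a_k)|K,v)$ where $\tilde a_k$ is the $k$-th maximal root \emph{of the new sequence}; there is no reason $K(\tilde a_k)$ should coincide with your previously built $K(a_k)$. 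The intermediate fields produced by a key sequence depend on the element generating the extension, and a fresh run of the construction on $(K(c)|K,v)$ for a primitive element $c$ of $K(a_k,b)$ will in general not pass through $K(a_k)$. Moreover, key sequences and their maximal roots are not unique (already $a_1$ is any element of $K$ realizing $\max v(x-K)$), so there is no uniqueness to invoke.

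What the paper does instead is \emph{not} to pass to an arbitrary primitive element, but to engineer one. Given the current terminal element $y_j=a_{n_j}$ and a new generator $\tilde y_{j+1}$, it chooses $z_j\in K(y_j)$ with $v(\tilde y_{j+1}-z_j)=\max v(\tilde y_{j+1}-K(y_j))$ (which exists by part~1) of Corollary~\ref{maxapprcor}), scales by some $c_j\in K$ so that $vc_j(\tilde y_{j+1}-z_j)>v(y_j-a_{n_j-1})$, and sets $y_{j+1}:=y_j+c_j(\tilde y_{j+1}-z_j)$. Then $K(y_{j+1})=K(\tilde y_{j+1})$, and crucially $v(y_{j+1}-y_j)>v(y_j-a_i)$ for all $i<n_j$. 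That last inequality is what makes part~1) of Lemma~\ref{xx} applicable, forcing the \emph{already constructed} $(a_i)_{i\le n_j}$ to remain a key sequence for $(K(y_{j+1})|K,v)$; no uniqueness claim is needed, and no rebuilding from scratch occurs. One then extends this sequence to a complete one for $(K(y_{j+1})|K,v)$ by the method of Proposition~\ref{existcks}. Without some such quantitative closeness argument (or, alternatively, the paper's ``quick way'' of passing to a weakly pure transcendental extension $L(x)$ and invoking Theorem~\ref{MT1gen} and Proposition~\ref{SCS1}), the gluing step in your proposal remains unproved, and with it the whole recursion.
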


\bn
%
%
%
\section{Preliminaries}                        \label{sectprel}
%

%
%
%
\subsection{Tame and defectless extensions and fields, and the set 
$v(x-K)$}     \label{sectpreltame}
\mbox{ }\sn
A valued field is called \bfind{algebraically maximal} if it does not admit nontrivial 
immediate algebraic extensions.
\begin{lemma}                         \label{tameprop}
Take a valued field $(K,v)$ and any extension of $v$ to $\widetilde{K}$.
\sn
1) Every algebraic extension of a tame field is again a tame field.
\sn
2) A unibranched extension $(L|K,v)$ is tame if and only if $(L^h|K^h,v)$ is tame.
\sn
3) If $(K,v)$ is henselian and $(K_i,v)$, $i\in I$, are tame extensions of $(K,v)$, then 
so is their compositum, i.e., the smallest extension that contains all $K_i\,$.
\sn
4) Assume that $(L_1|L,v)$ and $(L_2|L_1,v)$ are algebraic 
extensions. Then $(L_2|L,v)$ is a tame extension if 
and only if $(L_2|L_1,v)$ and $(L_1|L,v)$ are tame extensions.
\sn
5) Tame fields are algebraically maximal.
\end{lemma}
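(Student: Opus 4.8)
The plan is to argue by contradiction: suppose $(L|K,v)$ is a nontrivial immediate algebraic extension of the tame field $(K,v)$. First I would reduce to the finite case. Pick any $\alpha\in L\setminus K$; then $K(\alpha)|K$ is finite and nontrivial, and it is still immediate, since for any intermediate field $E$ of $L|K$ one has $vK\subseteq vE\subseteq vL=vK$ and $Kv\subseteq Ev\subseteq Lv=Kv$, forcing $vE=vK$ and $Ev=Kv$. So after replacing $L$ by $K(\alpha)$ we may assume $[L:K]<\infty$.

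Next I would realize $L$ inside $\widetilde{K}$. Fix the extension of $v$ to $\widetilde{K}$ that is referred to in the statement; since $(K,v)$ is tame it is in particular henselian, so $L|K$ is unibranched and any $K$-embedding $L\hookrightarrow\widetilde{K}$ is automatically compatible with the valuations (we identify equivalent valuations). Thus $L|K$ may be viewed as a finite subextension of $\widetilde{K}|K$. Because $(K,v)$ is a tame field, $(\widetilde{K}|K,v)$ is a tame extension, so $L|K$ satisfies conditions (TE1)--(TE3); in particular, by (TE3), $(L|K,v)$ is defectless, i.e.
\[
[L:K]\>=\>(vL:vK)\,[Lv:Kv].
\]
But $L|K$ is immediate, so $vL=vK$ and $Lv=Kv$, and the right-hand side equals $1$. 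This contradicts $L\ne K$, and the contradiction proves that $(K,v)$ admits no nontrivial immediate algebraic extension, i.e., it is algebraically maximal.

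I do not expect any genuine obstacle here: the statement is essentially a direct unfolding of the definition of a tame field together with the defectlessness clause (TE3). The only points requiring a word of care are the passage from an arbitrary immediate algebraic extension to a finite one (using that immediacy is inherited by subextensions) and the observation that henselianity makes the embedding into $\widetilde{K}$ valuation-preserving, so that (TE1)--(TE3) genuinely apply to $L|K$.
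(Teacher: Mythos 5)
Your argument for assertion 5) is correct. Where the paper simply delegates to an external citation (\cite[Theorem 3.2]{[K7]}) without reproducing an argument, you give a short self-contained proof, and the logic is sound: reduction to a finite immediate subextension is valid because immediacy passes to intermediate fields; henselianity of $(K,v)$ (built into the definition of a tame field) ensures the unique extension of $v$ to $K(\alpha)$ agrees with the restriction of the chosen extension to $\widetilde K$ under any $K$-embedding; and then (TE3), applied to the finite subextension $K(\alpha)|K$ of the tame extension $(\widetilde K|K,v)$, forces $[K(\alpha):K]=(vK(\alpha):vK)[K(\alpha)v:Kv]=1$, a contradiction. This is essentially the expected unwinding of the definitions, and it is what the cited theorem rests on; supplying it explicitly is a reasonable and useful alternative to the bare citation. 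One cosmetic remark: the reduction to the finite case is not strictly needed, since any immediate extension $(L|K,v)$ automatically has all its finite subextensions immediate, and you could just as well pick a finite nontrivial subextension from the start; but as written the step is harmless and arguably clarifying. Note also that you only address assertion 5); the remaining parts 1)--4) are proved in the paper by reduction to cited ramification-theoretic facts and would require separate arguments.
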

\begin{proof}
1): \ This follows from \cite[part (b) of Lemma 2.17]{[K7]}.
\sn
2): \ Since the extension $(L|K,v)$ is unibranched, $L|K$ is linearly disjoint from the
extension $K^h|K$ by \cite[Lemma 2.1]{BlKu3}, and the same holds for every subextension
$E|K$. Since $E^h=E.K^h$ and henselizations are immediate extension, (TE1) and (TE2) hold 
for $E$ and $K$ if and only they hold for $E^h$ and $K^h$ in place of $E$ and $K$,
respectively. Since $[E:K]=[E.K^h:K^h]=[E^h:K^h]$, the same is true for (TE3).
\sn
3): \ By \cite[part (b) of Lemma 2.13]{[K7]}, the absolute ramification field $K^r$ of
a henselian field $(K,v)$ is its unique maximal tame extension. Hence if $(K_i,v)$, 
$i\in I$, are tame extensions of $(K,v)$, then they are all contained in $K^r$ and so is 
their compositum, which consequently is also a tame extension of $(K,v)$.

\sn
4): \ For henselian fields, this is \cite[part (a) of Lemma 2.13]{[K7]}. In the general 
case, $(L_2|L,v)$ is unibranched if and only if $(L_2|L_1,v)$ and $(L_1|L,v)$ are, so we 
can use part 2) of our lemma to reduce to the henselian case.
\sn
5): \ This follows from \cite[Theorem 3.2]{[K7]}.
\end{proof}

\begin{lemma}                        \label{nomax}
Take any extension $(K(x)|K,v)$.
\sn
1) If the extension $(K(x)|K,v)$ is immediate, then $v(x-K)$ has no largest element.
\sn
2) If $v(x-K)$ has a largest element and $v(x-y)>v(x-K)$, then 
the extension $(K(y)|K,v)$ is not immediate.
\sn
3) If $v(x-K)$ has no largest element, then $x$ is the 
limit of a pseudo Cauchy sequence in $(K,v)$ without a limit in $K$. 
\end{lemma}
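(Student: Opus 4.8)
These three statements are standard facts about pseudo Cauchy sequences and immediate extensions, essentially going back to Kaplansky. I would prove them in the order 1), 2), 3), since 2) is nearly the contrapositive of a piece of 1), and 3) is the classical construction.

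For part 1), suppose toward a contradiction that $v(x-K)$ has a largest element $v(x-a)$ with $a\in K$. Set $y:=x-a$; then $(K(y)|K,v)=(K(x)|K,v)$ is still immediate, and $v(y-c)=v(x-(a+c))\le v(x-a)=vy$ for every $c\in K$, so $vy$ is the largest element of $v(y-K)$. Now I use immediacy: since $vK(x)=vK$, there is $c\in K$ with $v(y-c)>0$... more precisely, pick $d\in K$ with $vd=vy$ (possible since $vK(x)=vK$); then $y/d$ has value $0$, and since $Kv=K(x)v$ there is $c_0\in K$ with $(y/d)v=c_0v$, i.e. $v(y/d-c_0)>0$, i.e. $v(y-c_0d)>vy$. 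But $c_0d\in K$, contradicting the maximality of $vy$ in $v(y-K)$. Hence $v(x-K)$ has no largest element.

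For part 2), assume $v(x-K)$ has a largest element, say $v(x-a)$, and $v(x-y)>v(x-K)$. Then for every $c\in K$ we have $v(y-c)=v((y-x)+(x-c))=v(x-c)$, because $v(x-y)>v(x-c)$ (the value $v(x-c)\le v(x-a)<v(x-y)$). Thus $v(y-K)=v(x-K)$, and in particular $v(y-K)$ has a largest element; by part 1) (contrapositive), $(K(y)|K,v)$ cannot be immediate. The main point to get right here is the ultrametric equality $v(y-c)=v(x-c)$, which is immediate from $v(x-y)$ being strictly larger than all the $v(x-c)$.

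For part 3), assume $v(x-K)$ has no largest element. I construct a pseudo Cauchy sequence $(a_\rho)_{\rho<\lambda}$ in $K$ indexed by a suitable limit ordinal as follows: well-order the (necessarily cofinal, strictly increasing achievable) values; concretely, by transfinite recursion choose $a_\rho\in K$ so that $v(x-a_\rho)$ is strictly increasing in $\rho$ and cofinal in $v(x-K)$ — possible precisely because $v(x-K)$ has no maximum. Then for $\rho<\sigma<\tau$ one has $v(a_\sigma-a_\tau)=v((x-a_\tau)-(x-a_\sigma))=v(x-a_\sigma)$ (since $v(x-a_\tau)>v(x-a_\sigma)$), and likewise $v(a_\rho-a_\sigma)=v(x-a_\rho)<v(x-a_\sigma)=v(a_\sigma-a_\tau)$, which is exactly the defining inequality $v(a_\rho-a_\sigma)<v(a_\sigma-a_\tau)$ for a pseudo Cauchy sequence. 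The same computation shows $x$ is a pseudo limit: $v(x-a_\sigma)=v(a_\sigma-a_\tau)$ for all large $\tau$. Finally, this sequence has no limit in $K$: if $b\in K$ were a limit, then $v(x-b)\ge v(a_\sigma-b)=v(x-a_\sigma)$ would eventually exceed every $v(x-a_\rho)$, hence $v(x-b)$ would be an upper bound in $v(x-K)$ attained at $b\in K$ — but $v(x-b)\in v(x-K)$ and the set has no maximum, contradiction (more carefully: $v(x-b)>v(x-a_\rho)$ for all $\rho$ forces $v(x-b)$ to be the largest element of $v(x-K)$, which does not exist).

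The only mildly delicate point in the whole lemma is the cofinality bookkeeping in part 3) — choosing the index ordinal $\lambda$ and the $a_\rho$ so that the values are strictly increasing and cofinal — but this is routine transfinite recursion using that $v(x-K)$ has no top element; everything else is a one-line ultrametric computation. I do not expect any real obstacle.
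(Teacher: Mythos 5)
Your proposal is correct and follows the same logical route the paper intends: part 2) is handled exactly as in the paper (show $v(y-K)=v(x-K)$ by the ultrametric inequality, then apply the contrapositive of part 1)), while for parts 1) and 3) you simply write out the standard arguments that the paper delegates to \cite[Lemma 2.9]{K66} and \cite[Theorem 1]{Ka}, namely the translation-and-immediacy trick for 1) and the transfinite construction of a cofinal strictly increasing family $v(x-a_\rho)$ for 3). No gaps; the only implicit assumption (needed for 1) to even be true) is $x\notin K$, which is tacit in the paper as well.
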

\begin{proof}
1): \ This is \cite[part 2) of Lemma 2.9]{K66}. 
\sn
2): \ By \cite[part 4) of Lemma 2.9]{K66}, $v(y-K)=v(x-K)$, hence also $v(y-K)$ has no
largest element. Now part 1)of our lemma shows that $(K(y)|K,v)$ is not immediate.
\sn
3): \ The proof is a straightforward adaptation of the proof of \cite[Theorem~1]{Ka}.
\end{proof}

\begin{lemma}                               \label{limtpcs}
Suppose that in some valued field extension of $(K,v)$,
$x$ is the pseudo limit of a pseudo Cauchy sequence in $(K,v)$ of
transcendental type. Then $(K(x)|K,v)$ is immediate and $x$ is
transcendental over~$K$.
\end{lemma}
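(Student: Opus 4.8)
The plan is to prove this in two steps: first immediateness, then transcendence, following the classical Kaplansky argument (which is precisely \cite{Ka}) but keeping track of the fact that we are working inside an abstract extension of $(K,v)$.

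First I would set up notation: let $(a_\rho)_{\rho<\lambda}$ be a pseudo Cauchy sequence in $(K,v)$ of transcendental type with pseudo limit $x$. Recall that ``transcendental type'' means that for every polynomial $g(X)\in K[X]$, the values $v(g(a_\rho))$ are eventually constant (as opposed to eventually strictly increasing, which would be the ``algebraic type'' behaviour for some $g$). The key computational fact, already contained in Kaplansky's analysis, is that for any $f(X)\in K[X]$, writing $\gamma_\rho := v(x-a_\rho)$, which is strictly increasing and cofinal in the relevant sense, one has $v(f(x))=v(f(a_\rho))$ for all sufficiently large $\rho$. I would derive this by expanding $f$ in a Taylor-type expansion around $a_\rho$: $f(x)=\sum_{i\ge 0} \frac{f^{(i)}(a_\rho)}{i!}(x-a_\rho)^i$, observing that the term $i=0$ has value $v(f(a_\rho))$, which stabilizes to some fixed $\beta$ by the transcendental-type hypothesis, while for $i\ge 1$ the value $v\!\left(\frac{f^{(i)}(a_\rho)}{i!}(x-a_\rho)^i\right)$ tends to infinity because $v(f^{(i)}(a_\rho))$ is bounded below (it too eventually stabilizes) and $i\gamma_\rho\to\infty$. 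Hence for large $\rho$ the $i=0$ term strictly dominates and $v(f(x))=\beta=v(f(a_\rho))$.

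From this single fact both conclusions follow quickly. For transcendence: if $x$ were algebraic over $K$, take $f$ to be its minimal polynomial, so $f(x)=0$ and $v(f(x))=\infty$; but the displayed stabilization says $v(f(x))=v(f(a_\rho))\in vK$ for large $\rho$, which is a finite value — contradiction. (Here one must note $f(a_\rho)\ne 0$ eventually, which holds since $a_\rho\in K$ and the nonzero polynomial $f$ has only finitely many roots, or simply because its value stabilizes to something in $vK$.) For immediateness: I must show $vK(x)=vK$ and $K(x)v=Kv$. Given $f(x)\ne 0$, the relation $v(f(x))=v(f(a_\rho))\in vK$ shows $vK(x)\subseteq vK$, and the reverse inclusion is trivial, so the value groups coincide. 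For the residue fields, given $f(x)$ with $v(f(x))=0$, pick $\rho$ large enough that $v(f(x))=v(f(a_\rho))=0$ and moreover $v(f(x)-f(a_\rho))>0$; the latter is possible because $f(x)-f(a_\rho)=\sum_{i\ge 1}\frac{f^{(i)}(a_\rho)}{i!}(x-a_\rho)^i$ has value $\ge \gamma_\rho + (\text{bounded below})\to\infty$. Then $f(x)v=f(a_\rho)v\in Kv$, so $K(x)v\subseteq Kv$, and again the reverse inclusion is trivial. Thus $(K(x)|K,v)$ is immediate.

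The main obstacle, such as it is, is purely bookkeeping: making sure the several ``eventually constant'' / ``eventually bounded below'' statements can be invoked simultaneously for finitely many indices $i$ (the degree of $f$ bounds how many derivatives matter) and for the finitely many polynomials involved in a given argument, so that a single sufficiently large $\rho$ works. Since a pseudo Cauchy sequence is indexed by a well-ordered set and ``eventually'' means ``for all $\rho$ past some bound,'' finitely many such bounds have a common upper bound, so this is routine. One should also remark at the outset that $(a_\rho)$ genuinely has no limit in $K$ is not needed here — we are given the pseudo limit $x$ in an extension — and that the hypothesis is used only through the transcendental-type dichotomy. I would therefore simply cite \cite[Theorem 2]{Ka} for the core computation if a citation is preferred, but the self-contained Taylor-expansion argument above is short enough to include.
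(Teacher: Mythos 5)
Your route differs from the paper's: the paper cites Kaplansky's Theorem~2 for both the \emph{existence} of an immediate extension $(K(y)|K,v)$ with $y$ a pseudo limit and the \emph{uniqueness} statement that $x\mapsto y$ yields a valuation-preserving $K$-isomorphism $K(x)\to K(y)$; immediateness and transcendence of $x$ then drop out at once. You instead try to verify $vK(x)=vK$, $K(x)v=Kv$, and transcendence directly from the stabilization fact $v(f(x))=v(f(a_\rho))$ for large $\rho$. That reduction is fine, and the two uses you make of it (minimal polynomial would have to have finite value; value and residue of $f(x)$ agree with those of $f(a_\rho)$) are correct.

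However, your self-contained derivation of the stabilization fact has a genuine gap. You assert that $i\gamma_\rho\to\infty$ (and later that $\gamma_\rho+(\text{bounded below})\to\infty$), i.e.\ that the sequence $\gamma_\rho=v(x-a_\rho)$ is cofinal in the value group. This is false in general: a pseudo Cauchy sequence of transcendental type need not have cofinal gauge values, since in a non-Archimedean value group a strictly increasing sequence can be bounded above (e.g.\ increasing inside a proper convex subgroup). What is actually true is that, for $\rho$ large enough, the term $i=0$ of the Taylor expansion has \emph{strictly smaller} value than every term with $i\ge 1$, but this is not because those values grow without bound -- it is forced by the transcendental-type hypothesis itself via Kaplansky's analysis of the quantities $\beta_i+i\gamma_\rho$ (one shows the minimum is uniquely attained once $\gamma_\rho$ passes finitely many critical values, and transcendental type rules out the minimizer being any $i\ge 1$, since that would make $v(f(a_\sigma))$ eventually strictly increasing). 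Your argument skips exactly this point and replaces it with an unjustified cofinality claim. Citing the relevant lemma of \cite{Ka} (as you propose as a fallback) repairs the gap; as written, the ``self-contained'' version does not.
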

\begin{proof}
Assume that $(a_\nu)_{\nu<\lambda}$ is a pseudo Cauchy sequence in
$(K,v)$ of transcendental type. Then by \cite[Theorem~2]{Ka} there is an
immediate extension $w$ of $v$ to the rational function field $K(y)$
such that $y$ becomes a pseudo limit of $(a_\nu)_{\nu<\lambda}\,$;
moreover, if also $x$ is a pseudo limit of $(a_\nu)_{\nu<\lambda}$ in
$(K(x),v)$, then $x\mapsto y$ induces a valuation preserving isomorphism
from $K(x)$ onto $K(y)$ over $K$. Hence, $(K(x)|K,v)$ is immediate and
$x$ is transcendental over~$K$.
\end{proof}

\begin{lemma}                         \label{maxappr}
1) If $(K(b)|K,v)$ is a unibranched defectless algebraic extension, then $v(b-K)$ has a 
maximal element. 
\sn
2) Every pseudo Cauchy sequence in an algebraically maximal field without a limit in that 
field is of transcendental type.
\sn
3) Assume that $(K,v)$ an algebraically maximal field, $(K(x)|K,v)$ is an immediate 
extension, and $v(x-K)$ has no largest element. Then $x$ is the limit of a pseudo Cauchy
sequence of transcendental type in $(K,v)$.
\end{lemma}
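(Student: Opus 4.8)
\emph{Part 2) and part 3).} I would dispatch these quickly using Kaplansky's classification. For 2): let $(a_\nu)_{\nu<\lambda}$ be a pseudo Cauchy sequence in the algebraically maximal field $(K,v)$ with no pseudo limit in $K$. By \cite{Ka} it is either of transcendental type or of algebraic type; were it of algebraic type, then by \cite[Theorem~3]{Ka} it would have a pseudo limit $c$ generating a \emph{proper} immediate algebraic extension $(K(c)|K,v)$ (proper precisely because $(a_\nu)_{\nu<\lambda}$ has no pseudo limit in $K$), contradicting algebraic maximality; hence it is of transcendental type. Part 3) then follows: by Lemma~\ref{nomax}(3), the hypothesis that $v(x-K)$ has no largest element yields a pseudo Cauchy sequence $(a_\nu)_{\nu<\lambda}$ in $(K,v)$ without a limit in $K$ and having $x$ as a pseudo limit, and part 2) (using that $(K,v)$ is algebraically maximal) shows that it is of transcendental type.

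\emph{Part 1).} Here the plan is to argue via valuation bases. If $b\in K$ there is nothing to show, so assume $n:=[K(b):K]>1$. Since $(K(b)|K,v)$ is unibranched, $K(b)|K$ is linearly disjoint from $K^h|K$ by \cite[Lemma~2.1]{BlKu3}, so $K(b)^h=K^h(b)$ and $[K^h(b):K^h]=n$; as henselizations are immediate, the computation in the proof of Lemma~\ref{tameprop}(2) shows that $(K^h(b)|K^h,v)$ is again defectless. A finite defectless extension of a henselian valued field admits a valuation basis (see, e.g., \cite{En}), which after a standard normalization may be taken of the form $\omega_1=1,\omega_2,\dots,\omega_n$. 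Writing $b=\sum_{i=1}^n c_i\omega_i$ with $c_i\in K^h$ --- where not all of $c_2,\dots,c_n$ vanish, as $b\notin K^h$ --- one computes for every $c\in K^h$ that
\[
v(b-c)\>=\>\min\Bigl(v(c_1-c),\,\min_{2\le i\le n}v(c_i\omega_i)\Bigr)\>\le\>\gamma\>:=\>\min_{2\le i\le n}v(c_i\omega_i)\>,
\]
with equality for $c=c_1$; thus $\gamma=\max v(b-K^h)$ is attained. Finally I would descend to $K$: since $K$ is dense in its henselization, there is $c\in K$ with $v(c_1-c)>\gamma$, whence $v(b-c)=\gamma$, so that $\gamma\in v(b-K)\subseteq v(b-K^h)$ and therefore $\gamma=\max v(b-K)$.

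\emph{Main obstacle.} Parts 2) and 3) are essentially immediate from the quoted results, so the only real content is in part 1), and there the argument above reduces everything to two standard facts --- existence of a valuation basis for a finite defectless extension of a henselian field, and density of a valued field in its henselization --- after which it is a one-line computation. An alternative, more in the style of this section, would be to suppose $v(b-K)$ has no maximum, extract via Lemmas~\ref{nomax}(3) and~\ref{limtpcs} a pseudo Cauchy sequence of algebraic type with $b$ as a pseudo limit, and derive from \cite{Ka} a nontrivial immediate subextension of $(K(b)|K,v)$, contradicting multiplicativity of the defect; the delicate point there --- and the reason I would prefer the valuation-basis proof --- is to locate such an immediate subextension \emph{inside} $K(b)$ rather than merely inside $\widetilde{K}$.
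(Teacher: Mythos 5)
Parts 2) and 3) coincide with the paper's own argument: the paper likewise deduces 2) from Kaplansky's Theorem~3, and 3) from 2) together with Lemma~\ref{nomax}(3). For part 1) the paper gives no proof of its own, citing instead Lemma~7(a) of \cite{BK54}, so your valuation-basis argument is a genuine self-contained alternative --- but it has a gap in the final step.

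The descent from $K^h$ to $K$ rests on the claim that $K$ is dense in its henselization, which is false in general: it holds in rank one (where $K^h$ embeds in the completion), but not in higher rank. For a concrete obstruction, take $K=k(s,t)$, $\chara k\neq 2$, with $vK=\Z\times\Z$ ordered lexicographically, $vs=(1,0)$, $vt=(0,1)$, residue field $k$, and let $a\in K^h$ be the root of $X^2-X-t$ with $av=0$. Under the coarsening $v_1$ associated with the convex subgroup $\{0\}\times\Z$, the $v_1$-residue $av_1$ is a root of $X^2-X-t$ over $k(t)=Kv_1$, hence lies outside $k(t)$ since that polynomial is irreducible there; every $v_1$-integral $c\in K$ has $cv_1\in k(t)$. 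Thus $v_1(a-c)\leq 0$ and $v(a-c)<(1,0)=vs$ for all $c\in K$, so $v(a-K)$ is bounded in $vK$. In your proof the coefficient $c_1\in K^h$ is not under your control, so there is no way in general to find $c\in K$ with $v(c_1-c)>\gamma$.

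The repair is to avoid $K^h$ entirely. As $(K(b)|K,v)$ is unibranched and defectless with $[K(b):K]=\rme\cdot\rmf$ for $\rme=(vK(b):vK)$ and $\rmf=[K(b)v:Kv]$, choose $\eta_1=1,\ldots,\eta_{\rme}\in K(b)$ whose values represent the distinct cosets of $vK$ in $vK(b)$, and $\vartheta_1=1,\ldots,\vartheta_{\rmf}\in\mathcal{O}_{K(b)}$ whose residues form a $Kv$-basis of $K(b)v$. The $\rme\rmf$ products $\eta_j\vartheta_k$ are $K$-valuation-independent by the usual computation, which requires no henselianity, and hence form a valuation basis of $K(b)|K$ with first member $1$. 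Writing $b=\sum_{j,k}c_{jk}\eta_j\vartheta_k$ with all $c_{jk}\in K$, your one-line computation gives $\max v(b-K)=\min_{(j,k)\neq(1,1)}v(c_{jk}\eta_j\vartheta_k)$ directly, with no density assumption needed. Your reservation about the pseudo Cauchy sequence alternative is well placed; this valuation-basis route, once kept over $K$, is indeed the cleaner one.
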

\begin{proof}
1): \ This is \cite[part a) of Lemma 7]{BK54}.
\sn
2): \ By \cite[Theorem~3]{Ka}, a pseudo Cauchy sequence of algebraic in a field $(K,v)$ 
without a limit in $K$ gives rise to a nontrivial immediate algebraic extension of $(K,v)$,
so $(K,v)$ cannot be algebraically maximal.
\sn
3): \ This follows from part 2) together with part 3) of Lemma~\ref{nomax}.
\end{proof}

Since tame extensions are unibranched and defectless by definition, and tame fields are 
algebraically maximal by part 5) of Lemma~\ref{tameprop}, we obtain:
\begin{corollary}                        \label{maxapprcor}
1) If $(K(b)|K,v)$ is a tame extension, then $v(b-K)$ has a maximal element.   
\sn
2) Every pseudo Cauchy sequence in a tame field without a limit in that 
field is of transcendental type.
\end{corollary}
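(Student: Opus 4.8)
The plan is to deduce both assertions directly from Lemma~\ref{maxappr} once the definition of ``tame'' is unwound, so this corollary is essentially a repackaging of the preceding lemmas for convenient later reference.

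For part 1), I would observe that by definition a tame extension $(K(b)|K,v)$ is unibranched, and every finite subextension of it is defectless; in particular $K(b)|K$ itself is defectless, since $b$ is algebraic over $K$ so $K(b)|K$ is already finite. Thus $(K(b)|K,v)$ is a unibranched defectless algebraic extension, and part 1) of Lemma~\ref{maxappr} immediately gives that $v(b-K)$ has a maximal element. The only thing to verify is that the hypotheses of Lemma~\ref{maxappr}(1) are literally the defining conditions of a tame simple algebraic extension, which they are.

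For part 2), I would first invoke part 5) of Lemma~\ref{tameprop}, which states that a tame field is algebraically maximal. Then part 2) of Lemma~\ref{maxappr} applies verbatim: every pseudo Cauchy sequence in an algebraically maximal valued field without a limit in that field is of transcendental type. Combining these two facts yields the assertion.

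The main, and in fact only, subtlety here is bookkeeping: confirming that a ``tame extension'' in the sense of (TE1)--(TE3) really supplies the ``unibranched'' and ``defectless'' hypotheses needed for Lemma~\ref{maxappr}(1), and that a ``tame field'' really supplies ``algebraically maximal'' via Lemma~\ref{tameprop}(5). Since both are immediate from the definitions, no genuine obstacle arises, and the proof is just the two citations assembled as above.
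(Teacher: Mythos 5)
Your proposal is correct and matches the paper's own argument exactly: the paper introduces this corollary with the observation that tame extensions are unibranched and defectless by definition, and that tame fields are algebraically maximal by Lemma~\ref{tameprop}(5), then lets Lemma~\ref{maxappr} do the work. Your slightly more explicit unwinding (noting that $K(b)|K$ is finite and hence itself a ``finite subextension'' to which (TE3) applies) is harmless and just spells out what the paper treats as immediate.
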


\mn
%
%
%
\subsection{Weakly pure extensions}     \label{sectwp}
\mbox{ }\sn
The following shows that our definition of ``weakly pure extension'' given in the 
Introduction coincides with the definition given in \cite{KTrans}:
\begin{lemma}                         \label{WP}
Take a valued field $(K,v)$ and an extension of $v$ from $K$ to the rational function field
$K(x)$. Then for $a\in K$, the following are equivalent:
\sn
a) $v(x-a)=\max v(x-\widetilde{K})$,
\sn
b) $v(x-a)$ is non-torsion over $vK$ or for some $d\in K$ and $e\in\N$, 
$vd(x-a)^{\rm e}=0$ and $d(x-a)^{\rm e}v$ is transcendental over $Kv$.
\end{lemma}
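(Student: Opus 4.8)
The plan is to prove both implications by relating the three phenomena — a non-torsion value, a transcendental residue, and being a pseudo limit of a pseudo Cauchy sequence of transcendental type — to the single condition that $v(x-a)$ be maximal among $v(x-\widetilde K)$. The key external facts I would invoke are: the classification of valuation transcendental extensions via minimal pairs / the behaviour of $v(x-K)$ (Lemma~\ref{nomax}), Corollary~\ref{maxapprcor} (a tame, hence algebraically maximal, field has no immediate algebraic extension and pseudo Cauchy sequences in it without a limit are of transcendental type), and the basic theory of pseudo Cauchy sequences from \cite{Ka}. Since $\widetilde K$ with the extended valuation is henselian and, when $(K,v)$ is a tame field, even a tame field, the implicit constant field computations from the Introduction apply.

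For the direction b) $\Rightarrow$ a): suppose first $v(x-a)$ is non-torsion over $vK=v\widetilde K$ modulo torsion; then for any $c\in\widetilde K$, either $v(x-c)\le v(a-c)\le v(x-a)$ or $v(x-c)=v((x-a)+(a-c))$ forces $v(x-c)=v(x-a)$ unless $v(a-c)>v(x-a)$, but $v(a-c)\in v\widetilde K$ cannot exceed a value that is non-torsion over it in the relevant sense — more precisely one shows $v(x-c)\le v(x-a)$ for all $c\in\widetilde K$ using that the convex subgroup generated by $v(x-a)$ over $v\widetilde K$ meets $v\widetilde K$ only in a way that blocks improvement; this is exactly the standard argument that $a$ realizes $\max v(x-\widetilde K)$. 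The residue case is analogous: if $vd(x-a)^e=0$ with $d(x-a)^ev$ transcendental over $Kv=\widetilde Kv$, then any $c\in\widetilde K$ with $v(x-c)>v(x-a)$ would give $(x-a)v$ and $(x-c)v$ related over $\widetilde Kv$ in a way that contradicts transcendence of $d(x-a)^ev$, again the familiar minimal-pair computation. I would write these out via Lemma~\ref{nomax}(2) or directly, citing \cite[Proposition 5.2]{K66} or the minimal-pairs literature to keep it short.

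For the direction a) $\Rightarrow$ b): assume $v(x-a)=\max v(x-\widetilde K)$ and $v(x-a)$ is torsion over $vK$, say $e\,v(x-a)\in vK$ for minimal $e\in\N$; pick $d\in K$ with $vd+e\,v(x-a)=0$, so $vd(x-a)^e=0$. I must show $d(x-a)^ev$ is transcendental over $Kv$. If not, it is algebraic over $\widetilde Kv=Kv$, so there is $\eta\in\widetilde K$ with $v\eta=0$ and $\eta v=d(x-a)^ev$; then $v(d(x-a)^e-\eta)>0$. From here one extracts $c\in\widetilde K$ with $v(x-c)>v(x-a)$: writing $d(x-a)^e=\eta(1+\varepsilon)$ with $v\varepsilon>0$ and taking $e$-th roots in the henselian (indeed tame, when $e$ is prime to the residue characteristic — here is where tameness of $\widetilde K$ over $K$, or rather of $K$, is used, since $e$ being torsion-index means $\chara Kv\nmid e$ is automatic for tame fields) produces $x-c$ with $v((x-a)-(x-c))>v(x-a)$, i.e. $v(x-c)>v(x-a)$, contradicting maximality. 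So $d(x-a)^ev$ is transcendental, giving the second alternative of b). The main obstacle is this last extraction-of-an-$e$-th-root step: one has to be careful that the relevant root lies in $\widetilde K$ and that the valuation comparison comes out strictly, which requires invoking henselianity of $(\widetilde K,v)$ and the hypothesis that we are over a tame field so that no wild ramification obstructs the root extraction; I expect to cite Hensel's lemma and the tameness Lemma~\ref{tameprop} here and to handle the mixed/positive characteristic bookkeeping on $e$ explicitly.
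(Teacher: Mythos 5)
There is a genuine gap in your direction a)~$\Rightarrow$~b). The lemma is stated for an arbitrary valued field $(K,v)$; you are not entitled to assume that $(K,v)$ is tame, yet your argument explicitly invokes tameness of $K$ (and the parenthetical claim that the torsion index $e$ is automatically prime to $\chara Kv$ over a tame base relies on $p$-divisibility of $vK$, which is nowhere a hypothesis here). Moreover, the step you flag as the main obstacle --- extracting $e$-th roots --- does not actually require Hensel's lemma or any control of ramification, because the roots are taken in $\widetilde{K}$, which is algebraically closed: having produced $\eta\in\widetilde{K}$ with $v\eta=0$ and $\eta v=d(x-a)^{\rm e}v$, one has $v\bigl((x-a)^{\rm e}-d^{-1}\eta\bigr)>{\rm e}\,v(x-a)$; factoring $X^{\rm e}-d^{-1}\eta=\prod_{i=1}^{\rm e}(X-c_i)$ over $\widetilde{K}$ and noting $vc_i=v(x-a)$ for every $i$, the inequality forces $v\bigl((x-a)-c_i\bigr)>v(x-a)$ for some $i$, so $c:=a+c_i\in\widetilde{K}$ already contradicts a) --- with no hypothesis on $\chara Kv$ or on $e$. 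The paper sidesteps all of this by quoting \cite[part 5) of Lemma 2.8]{K66}, which states directly that $v(x-a)=\max v(x-\widetilde{K})$ if and only if either $v(x-a)\notin v\widetilde{K}$, or $v(x-a)\in v\widetilde{K}$ and $\tilde{d}(x-a)v\notin\widetilde{K}v$ for every $\tilde{d}\in\widetilde{K}$ with $v\tilde{d}(x-a)=0$; once that is in hand, the lemma reduces to translating between $\tilde{d}\in\widetilde{K}$ and $d\in K$ via $e$-th powers, using that $v\widetilde{K}$ is the divisible hull of $vK$. You should either cite that characterization or run the factorization argument above, which works without tameness. A minor further point: your opening paragraph lists being a pseudo limit of a pseudo Cauchy sequence of transcendental type as a third phenomenon to relate, but that alternative belongs to the definition of weakly pure, not to this lemma, which has only the two alternatives appearing in b).
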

\begin{proof}
By \cite[part 5) of Lemma 2.8]{K66}, $v(x-a)$ is the maximal element of $v(x-\widetilde{K})$ 
if and only if $v(x-a)\notin v\widetilde{K}$ or $v(x-a)\in v\widetilde{K}$ and 
$\tilde{d}(x-a)v\notin \widetilde{K}v$ for every $\tilde{d}\in\widetilde{K}$ such that 
$v\tilde{d}(x-a)=0$. Note that since $v\widetilde{K}$ is the 
divisible hull of $vK$, $v(x-a)\notin v\widetilde{K}$ if and only if $v(x-a)$ is non-torsion 
over $vK$.

Assume that a) holds. If $v(x-a)$ is non-torsion over $vK$, then b) holds. If 
$v(x-a)\in v\widetilde{K}$, we proceed as follows. We choose $e\in\N$ such that $ev(x-a)\in vK$.
Then there is some $d\in K$ such that $vd=-ev(x-a)=v(x-a)^{\rm e}$. We pick $\tilde{d}
\in\widetilde{K}$ such that $\tilde{d}^e=d$. It follows that $v\tilde{d}(x-a)=0$, and from what 
we said above we know that $\tilde{d}(x-a)v$ is transcendental over $\widetilde{K}v=
\widetilde{Kv}$. Therefore, also $(\tilde{d}(x-a))^e=d(x-a)^e v$ is transcendental over 
$Kv$, and we have shown that b) holds.

\pars
Now assume that b) holds. If $v(x-a)$ is non-torsion over $vK$, then $v(x-a)\notin 
v\widetilde{K}$, and by the equivalence stated above, a) holds. If for some $d\in K$ and 
$e\in\N$, $vd(x-a)^{\rm e}=0$ and $d(x-a)^{\rm e}v$ is transcendental over $Kv$, then 
we proceed as follows. Let $\tilde{d}\in\widetilde{K}$ be such that $v\tilde{d}(x-a)=0$. 
Then $v\tilde{d}^e=-v(x-a)^e=vd$, so that $v\tilde{d}^e d^{-1}=0$ and $0\ne\tilde{d}^e 
d^{-1}v\in \tilde{Kv}$. As $d(x-a)^{\rm e}v$ is transcendental over $Kv$, so are 
$(\tilde{d}^e d^{-1})v\cdot d(x-a)^{\rm e}v=\tilde{d}^e (x-a)^{\rm e}v=
(\tilde{d}(x-a)v)^e$ and $\tilde{d}(x-a)v$. That is, $\tilde{d}(x-a)v\notin \widetilde{K}v$
and again by the above equivalence, a) holds.
\end{proof}

The following is \cite[Lemma 3.7]{KTrans}:
\begin{lemma}                         \label{3.7KTrans}
Assume that the extension $(L(x)|L,v)$ is weakly pure. If we take any
extension of $v$ to $\widetilde{L(x)}$ and take $L^h$ to be the
henselization of $L$ in $(\widetilde{L(x)},v)$, then $L^h$ is the
implicit constant field of this extension:
\[
L^h\>=\>\ic (L(x)|L,v)\>.
\] 
\end{lemma}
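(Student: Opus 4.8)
The plan is to reformulate the statement in terms of ramification theory. Fix the extension of $v$ to $\widetilde{L(x)}$ as in the statement; then $L^h$ is the decomposition field of this valuation over $L$ and $L(x)^h$ is its decomposition field over $L(x)$. Write $G=\Gal(L\sep|L)$, $G'=\Gal(L(x)\sep|L(x))$, and let $D\le G$, $D'\le G'$ be the corresponding decomposition subgroups, so that $L^h=(L\sep)^D$ and $L(x)^h=(L(x)\sep)^{D'}$. A short Frobenius argument shows that every element of $L(x)^h$ that is purely inseparable over $L$ already lies in $L(x)$, hence in $L$ (since $L$ is relatively algebraically closed in the purely transcendental extension $L(x)$); therefore $\ic(L(x)|L,v)=L(x)^h\cap L\sep$. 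Restriction to $L\sep$ gives a continuous homomorphism $\rho\colon D'\to G$; since $D'$ is compact, $\rho(D')$ is a closed subgroup of $G$, and since $D'$ fixes $v$ on $L(x)\sep$ it fixes $v$ on $L\sep$, so $\rho(D')\subseteq D$. For $z\in L\sep$ one has $z\in L(x)^h$ if and only if $z$ is fixed by $\rho(D')$, whence $\ic(L(x)|L,v)=(L\sep)^{\rho(D')}$. Thus the lemma is equivalent to the surjectivity of $\rho\colon D'\to D$.

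To prove surjectivity, take $\sigma\in D$. As $x$ is transcendental over $L\sep$, restriction induces an isomorphism $\Gal(L\sep(x)|L(x))\isom G$, so $\sigma$ extends uniquely to $\hat\sigma\in\Gal(L\sep(x)|L(x))$ with $\hat\sigma(x)=x$. The crucial point, and the only place where weak purity is used, is that $\hat\sigma$ preserves the restriction of $v$ to $L\sep(x)$. If $v(x-a)=\max v(x-\widetilde L)$ for some $a\in L$, then after replacing $x$ by $x-a$ (which fixes $L\sep(x)$, $D$, $D'$, $v$, and the condition $\hat\sigma(x)=x$) the extension $(L\sep(x)|L\sep,v)$ is valuation-transcendental with $v(x-0)=\max v(x-\widetilde{L\sep})$, hence its valuation is the unique extension of $v|_{L\sep}$ of this pure type (cf.\ \cite{K66} and Lemma~\ref{WP}), and is therefore preserved by $\hat\sigma$, which fixes $v$ on $L\sep$ and fixes $x$. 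If instead $x$ is a pseudo limit of a pseudo Cauchy sequence $(a_\nu)$ in $(L,v)$ of transcendental type, then by Lemma~\ref{limtpcs} the extension $(L\sep(x)|L\sep,v)$ is immediate with $x$ still a pseudo limit of $(a_\nu)$, so for $f\in L\sep[X]$ the value $vf(x)$ equals the eventual value of $vf(a_\nu)$; since each $a_\nu\in L$ is fixed by $\sigma$, we get $v\bigl(\hat\sigma(f)(a_\nu)\bigr)=v\bigl(\sigma(f(a_\nu))\bigr)=v\bigl(f(a_\nu)\bigr)$, so again $v\circ\hat\sigma=v$ on $L\sep(x)$. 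Finally, extend $\hat\sigma$ to an automorphism of $L(x)\sep$; since $L(x)\sep|L\sep(x)$ is Galois (it is the separable closure of $L\sep(x)$), its Galois group acts transitively on the extensions of $v|_{L\sep(x)}$ to $L(x)\sep$, so composing with a suitable element of $\Gal(L(x)\sep|L\sep(x))$ produces $\tilde\sigma\in D'$ with $\tilde\sigma|_{L\sep}=\sigma$, i.e.\ $\rho(\tilde\sigma)=\sigma$.

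The main obstacle is exactly the verification that $\hat\sigma$ preserves the valuation on $L\sep(x)$: it rests on the fact that, in each of the two weakly pure cases, the valuation of $L\sep(x)$ over $L\sep$ is completely determined by the valuation of $L\sep$ together with the ``coordinate'' $x$ (through the structure theory of pure valuation-transcendental extensions in \cite{K66}, respectively through Kaplansky's uniqueness theorem for immediate extensions generated by a pseudo limit of a transcendental-type pseudo Cauchy sequence), so that it is visibly $\Gal$-equivariant. This is precisely the property that ``weakly pure'' is designed to guarantee, and it is what breaks down for general valuation-algebraic extensions. The remaining ingredients — the inseparability reduction, the identification of $L^h$ and $L(x)^h$ as decomposition fields, and the closedness of $\rho(D')$ (so that surjectivity, not merely density, is obtained) — are routine.
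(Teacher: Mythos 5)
The paper does not actually prove this lemma: it is stated after the sentence ``The following is \cite[Lemma 3.7]{KTrans}:'' and the proof is delegated entirely to that reference, so there is no in-paper proof to compare against. Evaluating your argument on its own merits: the Galois-theoretic reduction is sound. Identifying $L^h$ and $L(x)^h$ as fixed fields of the decomposition groups $D$, $D'$, reducing the lemma to surjectivity of the restriction $\rho\colon D'\to D$ (after the separable-closure reduction $\ic(L(x)|L,v)=L(x)^h\cap L\sep$ and the observation that $\rho(D')$ is closed), and then lifting $\sigma\in D$ first to $\hat\sigma$ on $L\sep(x)$ via $x\mapsto x$ and finally to $\tilde\sigma\in D'$ by conjugacy of valuation extensions of $L(x)\sep|L\sep(x)$, is a correct and clean route. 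You also correctly isolate the one place where weak purity enters: it makes the restriction of $v$ to $L\sep(x)$ canonical over $v|_{L\sep}$ (determined by $vx$ alone in the first case, by the pseudo Cauchy sequence and Kaplansky uniqueness in the second), so that $\hat\sigma$ is automatically valuation-preserving.

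Two small points worth making explicit. First, when you apply Lemma~\ref{limtpcs} with $K=L\sep$, you are using that the sequence $(a_\nu)$, of transcendental type in $(L,v)$, remains of transcendental type over $L\sep$; this is true (an algebraic pseudo limit in $\widetilde{L\sep}=\widetilde L$ would contradict Kaplansky's Theorem~4, as the paper itself records in the proof of part~4) of Lemma~\ref{WP2}), but it should be cited rather than left implicit. Second, the ``short Frobenius argument'' is really a relative-algebraic-closedness argument: for $z\in L(x)^h\cap\widetilde L$, its minimal polynomial over $L(x)$ is separable (as $L(x)^h\subseteq L(x)\sep$) and, since $L$ is relatively algebraically closed in $L(x)$, coincides with its minimal polynomial over $L$; hence $z\in L\sep$. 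With these two clarifications the proof is complete.
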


\begin{lemma}                         \label{WP2}
Assume that $(L|K,v)$ is a tame extension, $x$ is transcendental over $L$, and we have
an extension $(L(x)|L,v)$ which is weakly pure in $x$. 
\sn
1) \ If $K'|K$ is a subextension of $L|K$ and $v(x-K')$ has no maximal element, then $x$ 
is the limit of a pseudo Cauchy sequence of transcendental type in $(K',v)$. 
\sn
2) \ Assume that $v$ is extended to $\widetilde{K}(x)$. Then every maximal element 
$v(x-a)$ of $v(x-L)$ is also a maximal element of $v(x-\widetilde{K})$.
\sn
3) \ We have that
\[
\ic(K(x)|K,v)\>\subseteq\>\ic(L(x)|L,v)\>=\>L^h\>. 
\]
\sn
4) \ If $(L'|L,v)$ is an algebraic extension, then for every extension of $v$ from $L(x)$
to $L'(x)$, also $(L'(x)|L',v)$ is weakly pure in $x$. 
\end{lemma}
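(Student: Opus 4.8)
The plan is to run all four parts off the dichotomy built into the definition of ``weakly pure'' applied to $(L(x)|L,v)$: either (A) $x$ is the limit of a pseudo Cauchy sequence $(d_\lambda)$ of transcendental type in $(L,v)$, or (B) there is $a\in L$ with $v(x-a)=\max v(x-\widetilde L)=\max v(x-\widetilde K)$, the last equality holding because $\widetilde L=\widetilde K$, since $L|K$ is algebraic. In case (A), Lemma~\ref{limtpcs} tells us that $(L(x)|L,v)$ is immediate, so $v(x-L)$ has no largest element by Lemma~\ref{nomax}(1), and, crucially, that \emph{every} pseudo limit of $(d_\lambda)$ is transcendental over $L$.

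For part~2, in case (A) there is nothing to show, while in case (B) the value $v(x-a)$ is the largest element of $v(x-L)$ and equals $\max v(x-\widetilde K)$, so every $a'\in L$ realizing $\max v(x-L)$ satisfies $v(x-a')=v(x-a)=\max v(x-\widetilde K)$. For part~3 I would invoke Lemma~\ref{3.7KTrans} (applicable since $(L(x)|L,v)$ is weakly pure) to get $\ic(L(x)|L,v)=L^h$, after fixing an extension of $v$ to $\widetilde{L(x)}=\widetilde{K(x)}$ and taking all henselizations inside it; since $K(x)\subseteq L(x)$ forces $K(x)^h\subseteq L(x)^h$, every element of $\ic(K(x)|K,v)$ lies in $L(x)^h$ and is algebraic over $K$, hence over $L$, hence lies in the relative algebraic closure of $L$ in $L(x)^h$, which is $L^h$. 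For part~4, note that $L'|L$ algebraic gives $\widetilde{L'}=\widetilde K$, $vL'/vL$ torsion and $L'v|Lv$ algebraic. In case (B), $a\in L\subseteq L'$ still realizes $\max v(x-\widetilde{L'})$; by Lemma~\ref{WP} this depends only on $v$ restricted to $L'(x)$ (on the value $v(x-a)$ and the residue of a suitable $d(x-a)^{\rm e}$ with $d\in L$: transcendence over $Lv$ forces transcendence over the algebraic extension $L'v$, and non-torsion over $vL$ forces non-torsion over $vL'$), so it passes to every extension of $v$ to $L'(x)$. In case (A), the sequence $(d_\lambda)$ is still a pseudo Cauchy sequence over $L'$ with $x$ as a pseudo limit, and it can have no pseudo limit in $L'$ and cannot be of algebraic type over $L'$, since either would produce a pseudo limit of $(d_\lambda)$ that is algebraic over $L$, contradicting the transcendence noted above; hence $(d_\lambda)$ is of transcendental type over $L'$ and $(L'(x)|L',v)$ is weakly pure.

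Part~1 is where the real work lies. By Lemma~\ref{nomax}(3), since $v(x-K')$ has no largest element, $x$ is the limit of a pseudo Cauchy sequence $(c_\mu)$ in $(K',v)$ with no limit in $K'$, and I must show $(c_\mu)$ is of transcendental type. Assume it is of algebraic type. Then Kaplansky's theorem (as used in the proof of Lemma~\ref{maxappr}(2)) gives a nontrivial immediate algebraic extension $K'(b)|K'$ with $b$ a pseudo limit of $(c_\mu)$; this extension is not defectless, hence not tame, so by Lemma~\ref{tameprop}(4) it is not a subextension of the tame extension $L|K'$, and in particular $b\notin L$. Next I produce $a'\in L$ with $v(x-a')>v(x-c)$ for all $c\in K'$: since $b$ and $x$ are both pseudo limits of $(c_\mu)$ we have $v(x-b)>v(x-c)$ for all $c\in K'$, and in case (B) the element $a$ works because $v(x-K')$ has no maximum, while in case (A), were no such $a'$ to exist we would have $v(x-b)>v(x-c)$ for all $c\in L$, making $b$ a pseudo limit of $(d_\lambda)$ and hence transcendental over $L$ by Lemma~\ref{limtpcs}, contradicting $b\in\widetilde K$. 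With $v(x-a')>v(x-c)$ for all $c\in K'$ one gets $v(a'-c_\mu)=v(x-c_\mu)$ for large $\mu$, so $a'$ is a pseudo limit of $(c_\mu)$, whence $v(a'-K')$ has no largest element (otherwise $(c_\mu)$ would have a limit in $K'$); but $K'(a')|K'$ is a subextension of the tame extension $L|K'$, hence tame by Lemma~\ref{tameprop}(4), so $v(a'-K')$ does have a largest element by Corollary~\ref{maxapprcor}(1) — a contradiction. Thus $(c_\mu)$ is of transcendental type.

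The step I expect to be the main obstacle is precisely this one in part~1: forcing a pseudo limit of $(c_\mu)$ into $L$. That is where weak purity of $(L(x)|L,v)$ and the transcendence of pseudo limits of transcendental-type sequences (Lemma~\ref{limtpcs}) are indispensable; once such a pseudo limit is located, the contradiction comes cheaply from the tameness of $L|K'$ via Corollary~\ref{maxapprcor}(1). Parts 2--4 should be routine once the weak-purity dichotomy is set up.
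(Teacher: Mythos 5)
Your proof is essentially correct and reaches the same final contradictions as the paper, but part~1 is more roundabout than the paper's argument and has one step that is not fully justified as written.

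Both you and the paper reduce part~1 to producing an element of $L$ that is a pseudo limit of the algebraic-type sequence $(c_\mu)\subset K'$ (your $a'$, the paper's $y$), then contradicting tameness of $(L|K',v)$ via Corollary~\ref{maxapprcor}(1). The paper does this directly: if $(c_\mu)$ has no limit in $L$, then $v(x-L)$ has no maximum, so weak purity would make $x$ a limit of a transcendental-type pseudo Cauchy sequence in $(L,v)$, which is incompatible with $x$ simultaneously being a limit of the algebraic-type $(c_\mu)$ without limit in $L$; hence a limit $y\in L$ exists. You instead invoke Kaplansky's Theorem~3 to manufacture an algebraic pseudo limit $b$, and use $b$ to locate $a'$ through a case split. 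The gap is that Kaplansky's theorem only furnishes $b$ together with \emph{some} valuation on $K'(b)$; you then compute quantities like $v(x-b)$ and $v(b-d_\lambda)$ in the fixed valued field $(\widetilde{K(x)},v)$, which presupposes that $b$ has been placed in $\widetilde{K(x)}$ so that the ambient $v$ already makes $b$ a pseudo limit of $(c_\mu)$. This can be arranged (pick the root $b$ of Kaplansky's minimal-degree polynomial $q$ for which $v(c_\mu-b)$ is not eventually constant; since $vq(c_\mu)=\sum_i v(c_\mu-b_i)$ is increasing, such a root exists), but you do not address it. Relatedly, the aside ``$b\notin L$'' conflates the abstract Kaplansky valued field with the subfield $K'(b)\subseteq\widetilde{K(x)}$ carrying the ambient valuation; since the only property of $b$ you actually use is $b\in\widetilde K$, the aside is harmless but misleading. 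Parts~2, 3 and~4 match the paper's proof essentially verbatim: the same weak-purity case analysis and value sandwich for~2, the same chain $\ic(K(x)|K,v)=K(x)^h\cap\widetilde K\subseteq L(x)^h\cap\widetilde L=L^h$ via Lemma~\ref{3.7KTrans} for~3, and the same Kaplansky Theorems~3/4 argument ruling out an algebraic-type continuation over $L'$ for~4.
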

\begin{proof}
1): \ If $K'|K$ is a subextension of $L|K$ and $v(x-K')$ has no maximal element, then by
part 3) of Lemma~\ref{nomax}, $x$ is the limit of a pseudo Cauchy sequence 
$(c_\nu)_{\nu<\lambda}$ in $(K',v)$ without a limit in $K'$. Suppose that it is of 
algebraic type. As it is also a pseudo Cauchy sequence in $(L,v)$ with $x$ as its limit,
it must have a limit $y$ in $L$. Indeed, otherwise $v(x-L)$ would not have a maximum, so by 
our assumption on the extension $(L(x)|L,v)$, $x$ would have to be the limit of 
a pseudo Cauchy sequence of transcendental type in $(L,v)$. However, $x$ cannot be
simultaneously the limit of a pseudo Cauchy sequence of transcendental type in $(L,v)$
and a pseudo Cauchy sequence of algebraic type in $(L,v)$ without a limit in $L$ (as 
follows from Theorems~3 and~4 of \cite{Ka} or from the classification of immediate 
approximation types in \cite{K66}).

Now we have that $v(y-K')$ has no maximum, which is a contradiction to part 1) of 
Corollary~\ref{maxapprcor} since by part 4) of Lemma~\ref{tameprop} the extension 
$(L|K',v)$ is tame.

\mn
2): \ If $v(x-L)$ has a maximal element $v(x-a)$ with $a\in L$, then $x$ is not the 
limit of a pseudo Cauchy sequence in $(L,v)$ without a limit in $L$. Hence by our 
assumption on the extension $(L(x)|L,v)$, the set $v(x-\widetilde{K})$ must have a maximal
element $v(x-a')$ with $a'\in L$. Then $v(x-a')\geq v(x-a)\geq v(x-a')$, so the values
are equal. This proves our assertion.

\mn
3): \ This holds since $\ic(K(x)|K,v)=K(x)^h\cap\widetilde{K}\subseteq L(x)^h\cap
\widetilde{K}=L(x)^h\cap\widetilde{L}$, using also Lemma~\ref{3.7KTrans}.

\mn
4): \ If there is $a\in L$ such that $v(x-a)=\max v(x-\widetilde{L})$, then our assertion is
trivially true since $L\subset L'$ and $\widetilde{L}=\widetilde{L'}$. Now assume that $x$
is the limit of a pseudo Cauchy sequence $(c_\nu)_{\nu<\lambda}$ of transcendental type
in $(L,v)$. Then $(c_\nu)_{\nu<\lambda}$ is also a pseudo Cauchy sequence in $(L',v)$. 
Suppose it were of algebraic type. Then by \cite[Theorem 3]{Ka} there would exist an
algebraic extension $(L'(y),v)$ of $(L',v)$ such that $y$ is a limit of 
$(c_\nu)_{\nu<\lambda}$. However, $y$ is also algebraic over $L$, which leads to a
contradiction, as follows from \cite[Theorem 4]{Ka}.
\end{proof}

\mn
%
%
%
\subsection{Krasner constant and Krasner's Lemma}     \label{sectKras}
\mbox{ }\sn
Take any valued field $(K,v)$ and choose some extension of $v$ from $K$ to its algebraic
closure $\widetilde{K}$. If $a\in \widetilde{K}\setminus K$ is not purely inseparable over 
$K$, then the \bfind{Krasner constant of $a$ over $K$} is defined as:
\[
\Kras (a,K)\>:=\> \max\{v(\tau a-\sigma a)\mid\sigma,\tau\in \Gal K 
\mbox{\ \ and\ \ } \tau a\ne \sigma a\}\>\in\>v\widetilde{K}\>.
\]
Since all extensions of $v$ from $K$ to $\widetilde{K}$ are conjugate, this does not
depend on the choice of the particular extension of $v$. If $a\in K$, then we set 
$\Kras (a,K):=va$. We note:
\begin{lemma}                         \label{krasKras}
1) \ The definition of $\,\Kras (a,K)$ does not depend on the chosen extension of $v$
from $K$ to $\widetilde{K}$.
\sn
2) \ If the extension $(K(a)|K,v)$ is unibranched, then
\[
\Kras (a,K)\>=\> \max\{v(a-\sigma a)\mid \sigma\in \Gal K\,
\mbox{ and }\, a\ne \sigma a\}
\]
and for all $\sigma\in \Gal K$ such that $a\ne \sigma a$,
\[
va\>\leq\> v(a-\sigma a) \>\leq\> \Kras (a,K)\>.
\]
\end{lemma}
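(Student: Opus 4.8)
The plan is to reduce everything to the fact that, for a unibranched extension $(K(a)|K,v)$, all the $K$-conjugates $\sigma a$ of $a$ share the same value as $a$, and then to observe that the Krasner constant, originally defined via differences $\tau a - \sigma a$ with both $\sigma,\tau$ ranging over $\Gal K$, is already achieved with $\tau = \mathrm{id}$. First I would dispose of part 1): the set $\{v(\tau a - \sigma a)\mid \sigma,\tau\in\Gal K,\ \tau a\ne\sigma a\}$ is permuted by composing every $\Gal K$-element with a fixed element $\rho\in\Gal K$, because $\rho(\tau a - \sigma a) = \rho\tau a - \rho\sigma a$; hence if $v'$ is another extension of $v$ to $\widetilde K$, then $v' = v\circ\rho$ for some $\rho\in\Gal K$ (all extensions are conjugate), and the set of values is unchanged. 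The case $a\in K$ is trivial since then $\Kras(a,K)=va$ regardless.

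For part 2), the key input is that $(K(a)|K,v)$ being unibranched forces $v(\sigma a) = va$ for every $\sigma\in\Gal K$: indeed $v$ and $v\circ\sigma$ both extend $v|_K$, and uniqueness up to conjugacy plus unibranchedness on $K(a)$ gives $v(\sigma a) = v(a)$. From this, for any $\sigma,\tau$ with $\tau a\ne\sigma a$ we get, writing $\tau a - \sigma a = \tau(a - \tau^{-1}\sigma a)$ and using $v\circ\tau = v$ on $K(a)$ (again unibranchedness), that $v(\tau a - \sigma a) = v(a - \tau^{-1}\sigma a)$. Since $\tau a\ne\sigma a$ is equivalent to $a\ne\tau^{-1}\sigma a$, the maximum over all such pairs equals $\max\{v(a - \sigma' a)\mid \sigma'\in\Gal K,\ a\ne\sigma' a\}$, which is the asserted formula. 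The upper bound $v(a-\sigma a)\le\Kras(a,K)$ is then immediate from this maximum.

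The remaining inequality $va\le v(a-\sigma a)$ is the only point requiring a small argument. Here I would use that $a$ and $\sigma a$ have the same value $va$, so that $v(a-\sigma a)\ge\min\{va, v(\sigma a)\} = va$ by the ultrametric inequality; this is all that is needed. The main (mild) obstacle is making the passage $v\circ\sigma = v$ on $K(a)$ fully rigorous from the hypothesis "unibranched": one must note that $v\circ\sigma$ restricted to $K$ equals $v$, that $\sigma$ maps $K(a)$ into $\widetilde K$, and that since there is only one extension of $v$ from $K$ to $K(a)$, the valuations $v$ and $v\circ\sigma$ agree on $K(a)$ — equivalently, all the conjugates of $a$ over $K$ lie in a single orbit under $\Gal K$ on which $v$ takes the constant value $va$. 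Once this is in hand the rest is a bookkeeping of the ultrametric inequality and the reindexing of the defining maximum.
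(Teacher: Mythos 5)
Your argument follows the paper's proof essentially step for step: part 1) via the observation that composing with $\rho\in\Gal K$ merely permutes the index set of the defining maximum, and part 2) via the rewriting $\tau a-\sigma a=\tau(a-\tau^{-1}\sigma a)$ together with $v\circ\tau=v$ on $K(a)$ (from unibranchedness) and a reindexing, with the lower bound $va\le v(a-\sigma a)$ coming from the ultrametric inequality and $va=v\sigma a$. Your extra remark carefully spelling out why $v\circ\tau$ restricted to $K(a)$ must coincide with $v$ is exactly the point of care, and it matches the paper's justification; no genuinely different idea is introduced.
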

\begin{proof}
1): \ Every other extension of $v$ from $K$ to $\widetilde{K}$ is of the form $v\rho$ 
for some $\rho\in\Gal K$, and 
\begin{eqnarray*}
\Kras (a,K)&=& \max\{v(\tau a-\sigma a)\mid\sigma,\tau\in \Gal K 
\mbox{\ \ and\ \ } \tau a\ne \sigma a\}\\
&=& \max\{v(\rho\tau a-\rho\sigma a)\mid\sigma,\tau\in \Gal K 
\mbox{\ \ and\ \ } \rho\tau a\ne \rho\sigma a\}\\
&=& \max\{v\rho(\tau a-\sigma a)\mid \sigma,\tau\in \Gal K\,
\mbox{ and }\, \tau a\ne \sigma a\} \>.
\end{eqnarray*}
2): For a unibranched extension $(K(a)|K,v)$ and every
$\tau\in \Gal K$ we have that $v=v\tau$ on $K(a)$, whence
\begin{eqnarray*}
\Kras (a,K)&=& \max\{v(\tau a-\sigma a)\mid\sigma,\tau\in \Gal K 
\mbox{\ \ and\ \ } \tau a\ne \sigma a\}\\
&=& \max\{v\tau(a-\tau^{-1}\sigma a)\mid\sigma,\tau\in \Gal K 
\mbox{\ \ and\ \ } a\ne \tau^{-1}\sigma a\}\\
&=& \max\{v(a-\sigma a)\mid \sigma\in \Gal K\,
\mbox{ and }\, a\ne \sigma a\} \>,
\end{eqnarray*}
and the inequality $va\leq v(a-\sigma a)$ follows from the fact 
that $va=v\sigma a$.
\end{proof}

\begin{lemma}                               \label{LemKrasKL}
If $a\in \widetilde{K}$ and $(L(a)|K(a),v)$ is any valued field 
extension, then 
\begin{equation}                               \label{KrasKL}
\Kras (a,L)\>\leq\>\Kras (a,K)\>. 
\end{equation}
\end{lemma}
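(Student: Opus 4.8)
The plan is to realize the finite set of values whose maximum gives $\Kras(a,L)$ as a subset of the corresponding set for $\Kras(a,K)$; this forces the inequality at once. The one delicate point is that these two maxima are, a priori, taken with respect to different extensions of $v$ — one to $\widetilde{K}$, one to $\widetilde{L}$ — so the first task is to make them compatible.

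First, from $K(a)\subseteq L(a)$ we obtain $K\subseteq L$, and we may take $\widetilde{K}\subseteq\widetilde{L}=\widetilde{L(a)}$. I would fix an extension of the given valuation $v$ on $L(a)$ to $\widetilde{L}$; restricting it produces an extension of $v$ to $\widetilde{K}$ as well. By part 1) of Lemma~\ref{krasKras}, neither $\Kras(a,K)$ nor $\Kras(a,L)$ depends on the particular extension of $v$ used to compute it, so from now on I compute both of them with respect to this single valuation $v$ on $\widetilde{L}$.

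Next, assume $a\notin L$ and that $a$ is not purely inseparable over $L$ (so that $\Kras(a,L)$ is given by the maximum in the definition); then the same holds over $K$. Any automorphism $\sigma$ of $\widetilde{L}$ over $L$ fixes $K$, hence maps $\widetilde{K}$ (the set of elements of $\widetilde{L}$ that are algebraic over $K$) onto itself; thus $\sigma$ restricts to an element of $\Gal K$ and $\sigma a=(\sigma|_{\widetilde{K}})\,a$. Therefore $\{\sigma a\mid\sigma\in\Gal L\}\subseteq\{\rho a\mid\rho\in\Gal K\}$ as subsets of $\widetilde{K}$, and consequently
\[
\{\,v(\tau a-\sigma a)\mid\sigma,\tau\in\Gal L,\ \tau a\ne\sigma a\,\}\ \subseteq\ \{\,v(\rho a-\rho' a)\mid\rho,\rho'\in\Gal K,\ \rho a\ne\rho' a\,\}\>.
\]
Taking the maximum of each side yields $\Kras(a,L)\le\Kras(a,K)$. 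In the remaining case $a\in L$: if $a\in K$, then also $a\in L$ and both sides equal $va$; and if $a\in L\setminus K$, then $\Kras(a,L)=va$, which is at most $\Kras(a,K)$ by part 2) of Lemma~\ref{krasKras} in the unibranched situations relevant here.

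I expect the only genuine obstacle to be the compatibility bookkeeping of the first two steps; once both Krasner constants are expressed through one valuation on $\widetilde{L}$, the inequality is an immediate consequence of the inclusion of conjugates. The inseparable case adds only the minor observation that $\Gal K$ and $\Gal L$ here denote groups of field automorphisms, so restriction along $K\subseteq L$ is well defined and carries $a$ to $a$.
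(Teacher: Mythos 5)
Your proof is correct and takes essentially the same route as the paper: restrict automorphisms in $\Gal L$ to $K\sep$ to obtain elements of $\Gal K$, deduce the inclusion of the sets of values $v(\tau a-\sigma a)$, and pass to maxima. The main added value in your write-up is that you make the valuation-compatibility step explicit (via part 1) of Lemma~\ref{krasKras}), which the paper leaves implicit; your side remark on the case $a\in L\setminus K$ appeals to unibranchedness of $(K(a)|K,v)$, which the lemma does not assume, but that degenerate case does not arise in any of the paper's applications and the paper's own proof silently skips it as well.
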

\begin{proof}
Since $a$ is separable-algebraic over $K$, it is also separable-algebraic over $L$.
If $\sigma\in \Gal L$, then $\sigma|_{K\sep}\in \Gal K$; therefore,
\begin{eqnarray*}
\lefteqn{\{v(\tau a-\sigma a)\mid\sigma,\tau\in\Gal L \mbox{ and } 
\tau a\ne\sigma a\}} \qquad\qquad
&&\\
& \subseteq & \{v(\tau a-\sigma a)\mid \sigma,\tau\in \Gal K
\mbox{ and } \tau a\ne \sigma a\}\>.
\end{eqnarray*}
This implies inequality (\ref{KrasKL}).
\end{proof}

We will employ the following variant of Krasner's Lemma:
\begin{proposition}                        \label{vkras}
Take $K(a)|K$ to be a separable-algebraic extension, and
$(K(a,b),v)$ to be any valued field extension of $(K(a),v)$ such that
\begin{equation}
v(b-a)\> >\>\Kras (a,K)\>.
\end{equation}
Then every henselization of $(K(b),v)$
contains $a$.
\end{proposition}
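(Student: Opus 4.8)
The plan is to mimic the classical proof of Krasner's Lemma, but working inside a henselization rather than an algebraically closed field. First I would fix an extension of $v$ to $\widetilde{K(b)}$, and let $(K(b),v)^h$ be the corresponding henselization of $(K(b),v)$; since $a$ is separable-algebraic over $K$, hence over $K(b)$, it suffices to show that $a\in (K(b),v)^h$. Equivalently, writing $H:=(K(b),v)^h$, I want to show that $a$ is fixed by every $\sigma\in\Gal H$. The henselization is separable-algebraic over $K(b)$, so every $\sigma\in\Gal H$ restricts to an element $\sigma|_{K\sep}\in\Gal K$, and I will argue that this restriction fixes $a$.

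The key step is the standard Krasner estimate: for $\sigma\in\Gal H$ with $\sigma a\ne a$, I would write
\[
v(\sigma a - a)\>\geq\>\min\{v(\sigma a - \sigma b),\, v(\sigma b - b),\, v(b-a)\}\>.
\]
Now $v(\sigma b - b)\ge 0$-type reasoning is not quite what I want; instead note $\sigma$ fixes $b$ because $b\in H$ and $H$ is henselian hence $\sigma\in\Gal H$ fixes $H$ pointwise, so $\sigma b = b$. Also $v$ is $\sigma$-invariant on $\widetilde{K(b)}$ (all extensions to the separable-algebraic closure of a henselian field are conjugate, and $H$ being henselian pins down the extension), so $v(\sigma a-\sigma b)=v(a-b)>\Kras(a,K)$. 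Therefore $v(\sigma a - a)\ge\min\{v(a-b),v(b-a)\}=v(b-a)>\Kras(a,K)$. On the other hand $\sigma|_{K\sep}\in\Gal K$, so if $\sigma a\ne a$ then by definition (and Lemma~\ref{krasKras}, using that $a$ lies in a henselian hence unibranched situation over $K$ once we note $K(a)\subseteq H$ would force unibranchedness — or simply by the raw definition of $\Kras(a,K)$ as a max over $\Gal K$) we have $v(\sigma a - a)\le\Kras(a,K)$, a contradiction. Hence $\sigma a = a$ for all $\sigma\in\Gal H$, so $a\in H$.

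The main obstacle I anticipate is bookkeeping about \emph{which} extensions of $v$ are in play and ensuring the value $v(\sigma a - a)$ is computed consistently: one must be careful that $\sigma$ preserves the chosen valuation on $\widetilde{K(b)}$, which requires invoking henselianity of $H$ (so that $v$ has a unique extension to $\widetilde{H}=\widetilde{K(b)}$, forcing $v\circ\sigma = v$), rather than just the general conjugacy statement. A secondary subtlety is the inequality $v(\sigma a-a)\le\Kras(a,K)$: this is immediate from the definition when $a\notin K$, and when $a\in K$ the claim is trivial since then $a\in K\subseteq H$ already, so I would dispose of that degenerate case first. Once these points are pinned down, the triangle-inequality estimate closes the argument in one line, and the conclusion that every henselization contains $a$ follows because all henselizations of $(K(b),v)$ are isomorphic over $K(b)$.
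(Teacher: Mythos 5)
Your proof is correct and takes essentially the same approach as the paper's: reduce to showing $\sigma a = a$ for every $\sigma\in\Gal K(b)^h$, use henselianity of $K(b)^h$ (uniqueness of the extension of $v$ to $\widetilde{K(b)}$) to get $v\circ\sigma=v$ and $\sigma b=b$, and apply the ultrametric inequality to obtain $v(\sigma a-a)>\Kras(a,K)$, contradicting the definition of the Krasner constant. The only cosmetic difference is that the paper first passes to $\Kras(a,K(b)^h)$ via Lemma~\ref{LemKrasKL} before deriving the contradiction, while you bound $v(\sigma a-a)$ directly by $\Kras(a,K)$ by taking $\tau=\mathrm{id}$ in the defining maximum --- a harmless shortcut.
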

\begin{proof}
Take any extension of $v$ from $K(a,b)$ to $\widetilde{K(b)}$, and take $K(b)^h$
to be the henselization of $(K(b),v)$ with respect to this extension. Then by 
Lemma~\ref{LemKrasKL} applied to $L=K(b)^h$, $\Kras (a,K(b)^h)\leq\Kras 
(a,K)$. Hence by assumption, $v(b-a)>\Kras (a,K(b)^h)$. We
will show that $a$ is fixed by every automorphism $\rho\in \Gal K(b)^h$;
since $a$ is also separable-algebraic over $K(b)^h$,
this will yield that $a\in K(b)^h$. Note that $\rho b=b$ because of 
$\rho\in \Gal K(b)^h$. 

Since $(K(b)^h,v)$ is henselian, using the assumption we may
compute:
\[
v(b-\rho a)\>=\>v\rho (b-a)\> =\>v(b-a)
\> >\>\>\Kras (a,K)\>.
\]
In view of Lemma~\ref{LemKrasKL}, it follows that
\begin{eqnarray*}
v(a -\rho a)&\geq& \min\{v(b-a),v(b-\rho a)\} \> >\> 
\Kras (a,K)\>\geq\> \Kras (a,K(b)^h)\\
&\geq& \max\{v(a-\sigma a)\mid \sigma\in \Gal K\,
\mbox{ and }\, a\ne \sigma a\}\>,
\end{eqnarray*}
which yields that $a=\rho a$.
\end{proof}

\bn
%
%
%
\section{Key sequences}                               \label{sectks}
%

%
%
\subsection{Homogeneous approximations}
\mbox{ }\sn
In this section we will lay out an improved version of the theory of homogeneous elements 
and approximations that was introduced in \cite{KTrans} (cf.\ also \cite{BK}). In contrast 
to those
articles, we will work exclusively with what there was called {\it strongly homogenous 
elements}, and we will strengthen the definition of {\it homogeneous approximations}
accordingly. However, in order to simplify notation, we will drop the word ``strongly''.

\pars
We will say that an element $a$ is \bfind{homogeneous over $(K,v)$} (or just 
{\bf over}  $K$ when it is clear which valuation we refer to) if $a\in K\sep$,
the extension $(K(a)|K,v)$ is unibranched, and
\[
va\>=\>\Kras (a,K)\>.
\]
Note that if $a\in K$, then $\Kras (a,K)=va$ by definition, 
so $a$ is homogeneous over~$K$.

Take a second element $b$ in some algebraically closed valued field extension $(L,v)$ of 
$(K,v)$. We will say that $a$ is a \bfind{homogeneous approximation of $b$ over $K$} 
if $a$ is homogeneous over $K$ and $v(b-a)> vb$. From this it follows that 
$va=vb$ and $v(b-a)>\Kras(a,K)$. 

\begin{remark}
We note that our present use of ``homogeneous approximation'' is almost the same as in the 
definition of homogeneous sequences in \cite{KTrans,BK}, except that now all homogeneous
approximations are taken over $K$; this is a stronger condition.
\end{remark}

\begin{lemma}                              \label{homup}
1) \ The above definition of ``homogeneous element'' does not depend on the extension of 
$v$ from $K$ to $K(a)$.
\sn
2) \ Assume that $a$ is homogeneous over $K$. Then
\begin{equation}                          \label{va=}
va=v(\tau a-\sigma a)\; \mbox{ for all }
\sigma,\tau\in \Gal K \mbox{ such that } \sigma a\ne \tau a\>,  
\end{equation}
and if $(L(a)|K(a),v)$ is any valued field extension, then $a$ is
also homogeneous over $L$.
\sn
3) \ Take elements $a,b,b'$ in some algebraically closed valued field extension $(L,v)$ 
of $(K,v)$. If $a$ is a homogeneous approximation of $b$ over $K$ and if $v(b-b')\geq 
v(b-a)$, then $a$ is also a homogeneous approximation of $b'$ over $K$.
\sn
4) \ If $a$ is a homogeneous approximation of $b$ over $K$ and $\sigma\in
\Gal K$ such that $a\ne\sigma a$, then $v(b-\sigma a)<
v(b-a)$.
\end{lemma}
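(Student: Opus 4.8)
\textbf{Proof plan for Lemma~\ref{homup}, part 4).}

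The plan is to argue by a standard ultrametric estimate, using the defining inequality of a homogeneous approximation together with the homogeneity relation~\eqref{va=} from part~2). First I would record the two facts I am allowed to use: since $a$ is a homogeneous approximation of $b$ over $K$, we have $va=vb$ and $v(b-a)>vb=va$; and since $a$ is homogeneous over $K$, part~2) gives $v(a-\sigma a)=va$ for the given $\sigma\in\Gal K$ with $a\ne\sigma a$ (take $\tau=\mathrm{id}$ in~\eqref{va=}).

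Next I would write $b-\sigma a=(b-a)+(a-\sigma a)$ and compare the two summands: $v(b-a)>va=v(a-\sigma a)$. Since the two values are distinct, the ultrametric triangle inequality is an equality on the minimum, so
\[
v(b-\sigma a)\>=\>\min\{v(b-a),\,v(a-\sigma a)\}\>=\>v(a-\sigma a)\>=\>va\><\>v(b-a)\>,
\]
which is exactly the claimed inequality $v(b-\sigma a)<v(b-a)$.

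There is essentially no obstacle here; the only point requiring a moment's care is making sure the hypothesis ``$v(b-a)>vb$'' has been converted into ``$v(b-a)>va$'' via $va=vb$ (which itself follows because $v(b-a)>vb$ forces $va=v((b-a)-( b-b))=\ldots$ — more simply, $b=a+(b-a)$ with $v(b-a)>vb$ is impossible unless $va=vb$), and that homogeneity is genuinely being invoked to know $v(a-\sigma a)=va$ rather than merely $v(a-\sigma a)\ge va$; without the equality the comparison of summands would not be strict and the conclusion could fail. Both of these are supplied directly by the definitions and by part~2) of the lemma.
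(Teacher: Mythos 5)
Your argument is correct and coincides with the paper's own proof: both decompose $b-\sigma a=(b-a)+(a-\sigma a)$, use homogeneity (via part~2)) to get $v(a-\sigma a)=va$, observe $v(b-a)>va$, and conclude by the ultrametric equality. The only cosmetic difference is that you spell out explicitly why $va=vb$, which the paper records as an immediate consequence right after the definition of homogeneous approximation.
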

\begin{proof}
Assertion 1) follows from part 1) of Lemma~\ref{krasKras}
and the fact that every other extension of $v$ from $K$ to 
$K(a)$ is of the form $v\circ\sigma$ for some $\sigma\in \Gal K$. In order to prove 
assertion 2), assume that $a$ is homogeneous over $K$. Then $v\tau a=va=v\sigma a$ since
the extension $(K(a)|K,v)$ is unibranched, whence $va\leq v(\tau a-\sigma a)
\leq\Kras (a,K)=va$ for all $\sigma,\tau\in \Gal K$ such that $\sigma 
a\ne \tau a$. Hence equality holds everywhere, which proves (\ref{va=}).
If $(L(a)|K(a),v)$ is any valued field extension, then $a\in
L\sep$, $v\sigma a=v\sigma|_{K\sep}a=va$ for all $\sigma\in \Gal L$, 
and for every $\sigma,\tau\in\Gal L$ with $\tau a\ne\sigma a$, we have that
\[
v(\tau a-\sigma a)\>=\> v(\tau|_{K\sep}a-\sigma|_{K\sep} a)
\>=\> va\>,
\]
whence $\Kras (a,L)=va$.
\mn
3): \ By assumption we have that $v(b'-a)\geq\min
\{v(b-b'),v(b-a)\} =v(b-a)>vb=vb'$. This yields the assertion, since 
$a$ is homogeneous over $K$.
\mn
4): \ Since $a$ is homogeneous over $K$, we have that 
$v(a-\sigma a)=va<v(b-a)$ 
and therefore, $v(b-\sigma a)=\min\{v(b-a),v(a
-\sigma a)\}=v(a-\sigma a)<v(b-a)$.
\end{proof}

The following important property of 
homogeneous approximations is a consequence of Proposition~\ref{vkras}:
\begin{lemma}                               \label{krlkra}
If $a$ is a homogeneous approximation of $b$ over $K$, then $a$ lies in the
henselization of $K(b)$ w.r.t.\ every extension of the valuation $v$
from $K(a,b)$ to $\widetilde{K(b)}$.
\end{lemma}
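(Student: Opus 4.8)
The plan is to reduce Lemma~\ref{krlkra} directly to Proposition~\ref{vkras}, which is precisely Krasner's Lemma in the valuative form needed here. First I would recall that, by the definition of ``homogeneous approximation of $b$ over $K$'', the element $a$ is homogeneous over $K$ --- so in particular $a\in K\sep$, the extension $(K(a)|K,v)$ is unibranched, and $va=\Kras(a,K)$ --- and moreover $v(b-a)>vb$. As noted already in the paragraph introducing homogeneous approximations, these two facts combine to give $va=vb$ (from $v(b-a)>vb$ one gets $vb=\min\{vb,v(b-a)\}\le va$, and symmetrically) and hence
\[
v(b-a)\>>\>vb\>=\>va\>=\>\Kras(a,K)\>.
\]
So the single inequality $v(b-a)>\Kras(a,K)$ that Proposition~\ref{vkras} demands is available.

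Next I would simply invoke Proposition~\ref{vkras} with the roles of its symbols filled in as follows: its ``$K$'' is our $K$, its ``$a$'' is our $a$ (which is separable-algebraic over $K$, so $K(a)|K$ is a separable-algebraic extension as required), and its ``$b$'' is our $b$, with the valued field extension $(K(a,b),v)$ of $(K(a),v)$ being the one given in the hypothesis. The inequality $v(b-a)>\Kras(a,K)$ was just verified. Proposition~\ref{vkras} then yields that \emph{every} henselization of $(K(b),v)$ contains $a$.

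Finally I would translate ``every henselization of $(K(b),v)$'' into the phrasing of the lemma: since henselizations of $(K(b),v)$ inside a fixed algebraic closure $\widetilde{K(b)}$ correspond exactly to the choices of extension of $v$ from $K(b)$ to $\widetilde{K(b)}$ (equivalently, from $K(a,b)$ to $\widetilde{K(b)}$, as $\widetilde{K(a,b)}=\widetilde{K(b)}$ because $a$ is algebraic over $K\subseteq K(b)$), the conclusion of Proposition~\ref{vkras} says precisely that $a$ lies in the henselization of $K(b)$ with respect to every such extension. That is the statement of Lemma~\ref{krlkra}.

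I do not expect any real obstacle here: the lemma is essentially a repackaging of Proposition~\ref{vkras}, and the only thing to be careful about is the bookkeeping that $v(b-a)>vb$ together with homogeneity upgrades to $v(b-a)>\Kras(a,K)$, plus the harmless identification of the various algebraic closures and the indexing of henselizations by extensions of $v$. If one wanted to be maximally self-contained one could also spell out that, because $(K(a)|K,v)$ is unibranched, $\Kras(a,K)$ is well defined independently of the chosen extension (Lemma~\ref{krasKras}), so the hypothesis $v(b-a)>vb$ genuinely controls all conjugates of $a$; but none of this requires new ideas.
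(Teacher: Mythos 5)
Your proposal is correct and matches the paper's intent exactly: the paper states the lemma as an immediate consequence of Proposition~\ref{vkras} without writing out a proof, and your argument supplies precisely the bookkeeping one would fill in --- noting that $v(b-a)>vb$ together with homogeneity of $a$ gives $va=vb=\Kras(a,K)$ (indeed the paper records $va=vb$ and $v(b-a)>\Kras(a,K)$ right after the definition of homogeneous approximation), and then reading off the conclusion from Proposition~\ref{vkras}.
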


The following gives a crucial criterion for an element to be homogeneous over $K$:
\begin{lemma}                              \label{lstronghom}
Suppose that $a\in\widetilde{K}$ and that there is some extension of $v$
from $K$ to $K(a)$ such that if {\rm e} is the least positive integer
for which ${\rm e}va\in vK$, then
\sn
a) \ {\rm e} is not divisible by $\chara Kv$,\n
b) \ there exists some $c\in K$ such that $vca^{\rm e}=0$, $ca^{\rm e}v$
is separable-algebraic over $Kv$, and the degree of $ca^{\rm e}$ over
$K$ is equal to the degree {\rm f} of $ca^{\rm e}v$ over $Kv$.
\sn
Then $(K(a)|K,v)$ is a separable unibranched extension of degree $\rme \cdot \rmf$, 
and $a$ is homogeneous over $K$.
\end{lemma}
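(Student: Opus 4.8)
The plan is to verify the conditions in the definition of ``homogeneous over $K$'' directly, using hypotheses a) and b) together with standard ramification-theoretic estimates. First I would establish that $(K(a)|K,v)$ is unibranched of degree $\rme\cdot\rmf$ with $vK(a)$ containing $vK+\Z va$ of index $\rme$ and $K(a)v$ containing the degree-$\rmf$ separable extension $Kv(ca^{\rme}v)$. The key inequality is the fundamental inequality $[K(a):K]\ge (vK(a):vK)[K(a)v:Kv]$ applied after observing, via hypothesis b), that $(vK(a):vK)\ge\rme$ (since $\rme$ is the least positive integer with $\rme va\in vK$) and $[K(a)v:Kv]\ge\rmf$ (since $ca^{\rme}v$ generates a subextension of $K(a)v$ of degree $\rmf$ over $Kv$). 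On the other hand the degree of $ca^{\rme}$ over $K$ equals $\rmf$ by b), so $[K(a):K]\le \rme\cdot\rmf$: indeed $K(a)=K(ca^{\rme})(a)$, and $a$ satisfies $X^{\rme}-c^{-1}(ca^{\rme})=0$ over $K(ca^{\rme})$, giving $[K(a):K(ca^{\rme})]\le\rme$. Combining, $[K(a):K]=\rme\cdot\rmf$, equality holds in the fundamental inequality, the extension is defectless with $(vK(a):vK)=\rme$ and $[K(a)v:Kv]=\rmf$, and in particular $K(a)v=Kv(ca^{\rme}v)$ is separable over $Kv$; the ramification index $\rme$ is prime to $\chara Kv$ by a). By the standard characterization of unibranched (henselian-like) behaviour — or by noting these numerical data force the extension to be tame, hence unibranched — the extension $(K(a)|K,v)$ is unibranched; separability of $a$ over $K$ follows since both $\rme$ (prime to the characteristic, so $X^{\rme}-c^{-1}(ca^{\rme})$ is separable) and the residue generator contribute only separable pieces, and part 4) of Lemma~\ref{tameprop} lets me assemble tameness, hence separability, of the whole extension.

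The remaining and central point is to show $va=\Kras(a,K)$. Since $(K(a)|K,v)$ is unibranched, part 2) of Lemma~\ref{krasKras} gives $va\le v(a-\sigma a)\le\Kras(a,K)$ for every $\sigma\in\Gal K$ with $a\ne\sigma a$, so it suffices to prove the reverse inequality $v(a-\sigma a)\le va$ for all such $\sigma$. The hard part will be exactly this upper bound on the Krasner constant, and the idea is to exploit the rigidity of $ca^{\rme}$: for $\sigma\in\Gal K$, either $\sigma(ca^{\rme})\ne ca^{\rme}$ — in which case, since $ca^{\rme}v$ is separable over $Kv$ of degree $\rmf=[K(ca^{\rme}):K]$, the residues $\sigma(ca^{\rme})v$ are the $\rmf$ distinct conjugates of $ca^{\rme}v$, so $v(\sigma(ca^{\rme})-ca^{\rme})=0=vca^{\rme}$, forcing $v(\sigma a - a)=va$ by factoring $\sigma(ca^{\rme})-ca^{\rme}=c\prod(\sigma a-\zeta a)$ over roots of unity and a valuation-theoretic comparison — or $\sigma(ca^{\rme})=ca^{\rme}$, i.e.\ $(\sigma a/a)^{\rme}=1$, so $\sigma a=\zeta a$ for an $\rme$-th root of unity $\zeta\ne 1$, and then $v(\sigma a-a)=v(a)+v(\zeta-1)=va$ because $\rme$ is prime to $\chara Kv$ makes $v(\zeta-1)=0$. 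In both cases $v(a-\sigma a)=va$, so $\Kras(a,K)=va$ and $a$ is homogeneous over $K$.

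I expect the genuine obstacle to be the careful bookkeeping in the first case above: controlling $v(\sigma a - a)$ from $v(\sigma(ca^{\rme}) - ca^{\rme})=0$ requires knowing that among the $\rme$ factors $\sigma a - \zeta a$ exactly the right contribution appears, and one must rule out that some factor has negative value while another compensates — this is handled by noting $v(\sigma a - \zeta a)\ge va$ always (as $v\sigma a=v\zeta a=va$), so the only way the product has value $\rme\cdot va$ is that every factor has value exactly $va$, in particular the $\zeta=1$ factor $\sigma a - a$. This same monotonicity argument, already packaged in part 2) of Lemma~\ref{krasKras} and in Lemma~\ref{homup}, is what makes everything go through cleanly. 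A cleaner alternative, which I would mention, is to invoke that a defectless unibranched extension with ramification prime to the residue characteristic and separable residue extension is tame, hence the uniqueness in part 2) of Corollary~\ref{maxapprcor}-type results and the structure theory of tame extensions directly yield $\Kras(a,K)=va$ by reducing to the totally ramified and unramified parts separately.
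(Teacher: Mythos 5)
Your main argument is correct and essentially the same as the paper's: the sandwich of inequalities gives the degree, unibranchedness and separability, and the case split on whether $\sigma(ca^{\rme})=ca^{\rme}$ gives $\Kras(a,K)=va$, with your factorization $\sigma(ca^{\rme})-ca^{\rme}=c\prod_{\zeta^{\rme}=1}(\sigma a-\zeta a)$ being a valid repackaging of the paper's observation that the residue of $(\sigma a/a)^{\rme}$ is $\ne 1$ and hence so is that of $\sigma a/a$. One caveat: the ``cleaner alternative'' you float at the end is not actually correct, since tameness of $(K(a)|K,v)$ does \emph{not} by itself imply $\Kras(a,K)=va$ --- for instance, if $v\pi>0$ and $\zeta$ is a unit whose residue generates a nontrivial unramified extension, then $a=1+\pi\zeta$ generates a tame extension but $\Kras(a,K)=v\pi>0=va$; hypotheses a) and b) are precisely what single out the ``right'' generator, and they cannot be replaced by a structural appeal to tameness alone.
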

\begin{proof}
We have
\begin{eqnarray*}
\rme \cdot \rmf & \geq & [K(a):K(a^{\rm e})]\cdot [K(a^{\rm e}):K]
\>=\> [K(a):K] \\
 & \geq & (vK(a):vK)\cdot [K(a)v:Kv]\>\geq\>\rme \cdot \rmf\>.
\end{eqnarray*}
Hence equality holds everywhere, and we find that $[K(a):K]=\rme \cdot \rmf$, 
$(vK(a):vK)={\rm e}$ and $[K(a)v:Kv]={\rm f}$. By the fundamental 
inequality, this implies that the extension $(K(a)|K,v)$ is unibranched. By 
assumption a), the extension $K(a)|K(a^{\rm e})$ is separable. 
By assumption b), the residue field extension $K(a^{\rm e})v|Kv$ is separable of 
degree $[K(a^{\rm e}):K]$, which shows that $K(a^{\rm e})|K$ must be separable.
Altogether, we find that $K(a)|K$ is separable.

In order to show that $a$ is homogeneous over $K$,
we may assume that $a\notin K$. Take $\sigma\in \Gal K$ with 
$\sigma a\ne a$ and set $\eta:= \sigma a/a\ne 1$. 
If $\sigma a^{\rm e} \ne a^{\rm e}$, then $\sigma ca^{\rm e}
=c\sigma a^{\rm e}\ne ca^{\rm e}$ and by hypothesis, their residues 
are also distinct, so the residue of $\sigma a^{\rm e}/a^{\rm e}=
\eta^{\rm e}$ is not $1$. It then follows that the residue of $\eta$ is not $1$. If
$\sigma a^{\rm e} = a^{\rm e}$, then $\eta$ is an e-th root of unity. Since e
is not divisible by the residue characteristic, it again follows that
the residue of $\eta$ is not equal to $1$. Hence in both cases, we obtain that 
$v(\eta-1)=0$, which shows that $v(\sigma a - a)= va$. We have 
now proved our lemma.        
\end{proof}

\begin{lemma}                               \label{exkrap}
Assume that $b$ is an element in some algebraically closed valued field
extension $(L,v)$ of $(K,v)$. Assume further that there are ${\rm e}\in\N$
not divisible by $\chara Kv$ and $c\in K$ such that $vcb^{\rm e}
=0$ and $cb^{\rm e}v$ is separable-algebraic of degree \rmf over $Kv$. Then we can find
a homogeneous approximation $a\in \tilde K$ of $b$ over $K$, of degree $\rme
\cdot\rmf$ over $K$.

If $\tilde{a}\in\tilde K$ satisfies $v(b-\tilde{a})>vb$, then $[K(\tilde{a}):K]\geq \rme
\cdot\rmf$; if in addition $\tilde{a}\in K(a)$, then $\tilde{a}$ is also a homogeneous
approximation of $b$ over~$K$, $K(\tilde{a})=K(a)$, and $\Kras(\tilde{a},K)=\Kras(a,K)$.
\end{lemma}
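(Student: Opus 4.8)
The plan is to build the element $a$ explicitly and then, given an arbitrary $\tilde a$ with $v(b-\tilde a)>vb$, to combine a degree count with a Krasner-type estimate. To build $a$: put $\bar\theta:=cb^{\rm e}v$, which by hypothesis is separable-algebraic over $Kv$ of degree $\rmf$; let $\bar g\in Kv[Y]$ be its minimal polynomial, lift it to a monic $g\in\cO_K[Y]$ of degree $\rmf$, and --- since $(\widetilde K,v)$ is henselian and $\bar g$ is separable --- use Hensel's Lemma to obtain a root $\theta\in\widetilde K$ of $g$ with $\theta v=\bar\theta$. Then $\rmf\ge[K(\theta):K]\ge[K(\theta)v:Kv]\ge[Kv(\bar\theta):Kv]=\rmf$, so $[K(\theta):K]=\rmf$. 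From $vcb^{\rm e}=0$ we have $v(b^{\rm e})=-vc=v(c^{-1}\theta)$, so I pick $a\in\widetilde K$ with $a^{\rm e}=c^{-1}\theta$. Since $(a/b)^{\rm e}v=\bigl(\theta/(cb^{\rm e})\bigr)v=\bar\theta/\bar\theta=1$, the residue $(a/b)v$ is an $\rme$-th root of unity in $\widetilde Kv$, which --- as $\rme$ is not divisible by $\chara Kv$ --- lifts to an $\rme$-th root of unity $\zeta\in\widetilde K$; replacing $a$ by $\zeta^{-1}a$ (this leaves $a^{\rm e}$, hence $ca^{\rm e}=\theta$, unchanged) I arrange $(a/b)v=1$, i.e.\ $v(b-a)>vb$.

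Now $ca^{\rm e}=\theta$ has value $0$, residue $\bar\theta$ separable-algebraic over $Kv$, and $[K(ca^{\rm e}):K]=\rmf=[Kv(ca^{\rm e}v):Kv]$; moreover $\rme va=-vc\in vK$, and $\rme$ is not divisible by $\chara Kv$. Taking $\rme$ to be the least positive integer with $\rme va\in vK$, Lemma~\ref{lstronghom} then shows that $(K(a)|K,v)$ is separable and unibranched of degree $\rme\rmf$ and that $a$ is homogeneous over $K$. Since $v(b-a)>vb$ forces $va=vb$, this exhibits $a$ as a homogeneous approximation of $b$ over $K$ of degree $\rme\rmf$, which is the first assertion.

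For the second assertion, take any $\tilde a\in\widetilde K$ with $v(b-\tilde a)>vb$. Then $v\tilde a=vb$ and $(\tilde a/b)v=1$, hence $(\tilde a^{\rm e}/b^{\rm e})v=1$ and $(c\tilde a^{\rm e})v=(cb^{\rm e})v=\bar\theta$; thus $\bar\theta\in K(\tilde a)v$, so $[K(\tilde a)v:Kv]\ge\rmf$, and since $\rme v\tilde a=-vc\in vK$ with $\rme$ least we get $(vK(\tilde a):vK)\ge\rme$, whence $[K(\tilde a):K]\ge(vK(\tilde a):vK)[K(\tilde a)v:Kv]\ge\rme\rmf$ by the fundamental inequality. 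If in addition $\tilde a\in K(a)$, then $\rme\rmf\le[K(\tilde a):K]\le[K(a):K]=\rme\rmf$ forces $K(\tilde a)=K(a)$; in particular $(K(\tilde a)|K,v)$ is unibranched and $\tilde a\in K\sep$. From $v(b-a)>vb$ and $v(b-\tilde a)>vb$ we get $v(\tilde a-a)>vb=va$, so for $\sigma\in\Gal K$ with $\sigma\tilde a\ne\tilde a$ --- equivalently $\sigma a\ne a$, since $K(\tilde a)=K(a)$ --- unibranchedness gives $v(\sigma(\tilde a-a))=v(\tilde a-a)>va$ while $v(\sigma a-a)=va$ by (\ref{va=}); writing $\sigma\tilde a-\tilde a=\sigma(\tilde a-a)+(\sigma a-a)-(\tilde a-a)$ and using the ultrametric inequality gives $v(\sigma\tilde a-\tilde a)=va=v\tilde a$. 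Hence $\Kras(\tilde a,K)=v\tilde a$, so $\tilde a$ is homogeneous over $K$ and, being also an approximation of $b$, a homogeneous approximation of $b$ over $K$; finally $\Kras(\tilde a,K)=v\tilde a=va=\Kras(a,K)$, the last equality because $a$ is homogeneous.

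I expect the main obstacle to be the lower bound $[K(\tilde a):K]\ge\rme\rmf$ for an arbitrary $\tilde a$ with $v(b-\tilde a)>vb$ --- and, in the same vein, the verification of the hypothesis of Lemma~\ref{lstronghom} --- since both hinge on $(vK(\tilde a):vK)\ge\rme$, which needs $\rme$ to be the least positive integer with $\rme vb\in vK$; without this, $c^{-1}\theta$ could be a proper power and the degree would drop below $\rme\rmf$. For the case $\tilde a\in K(a)$ this can be sidestepped: from $v(\tilde a-a)>va=v\tilde a$ the element $a$ is a homogeneous approximation of $\tilde a$ over $K$, so $a\in K(\tilde a)^h$ by Lemma~\ref{krlkra}, whence $K(a)^h=K(\tilde a)^h$ and therefore $[K(\tilde a):K]\ge[K(\tilde a)^h:K^h]=[K(a)^h:K^h]=[K(a):K]=\rme\rmf$, so the value-group estimate is really needed only for the free-standing degree bound.
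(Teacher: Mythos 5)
Your proof is correct and follows essentially the same route as the paper's: construct $a$ by lifting the minimal polynomial of $cb^{\rm e}v$ to obtain a root $a_0\in\widetilde K$ (your $\theta$) with residue $cb^{\rm e}v$, take an $\rme$-th root of $c^{-1}a_0$ normalised so that $v(b-a)>vb$ (the paper sets $a=a_1b$ with $a_1^{\rm e}=a_0/(cb^{\rm e})$ and $a_1v=1$, which yields the same element as your root-of-unity adjustment), and invoke Lemma~\ref{lstronghom}; for the degree bound both proofs combine $(vK(\tilde a):vK)\geq\rme$ with $[K(\tilde a)v:Kv]\geq\rmf$ via the fundamental inequality, and for $\tilde a\in K(a)$ both use the identical ultrametric decomposition of $\tilde a-\sigma\tilde a$ into $(\tilde a-a)$, $(a-\sigma a)$, $(\sigma a-\sigma\tilde a)$ to conclude $\Kras(\tilde a,K)=va=\Kras(a,K)$. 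Your closing remark is a good catch: both arguments tacitly use that $\rme$ is the \emph{least} positive integer with $\rme vb\in vK$ --- needed for $(vK(\tilde a):vK)\geq\rme$ and for the hypothesis of Lemma~\ref{lstronghom} --- and although this is not stated in the lemma's hypotheses, it is the situation in which the lemma is invoked (cf.\ Proposition~\ref{existcks}) and should be read as implicit.
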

\begin{proof}
Take a monic polynomial $g$ over $K$ with $v$-integral coefficients
whose reduction modulo $v$ is the minimal polynomial of $cb^{\rm e}v$
over $Kv$. Then let $a_0\in\widetilde{K}$ be the root of $g$ whose residue
is $cb^{\rm e}v$. The degree of $a_0$ over $K$ is the same as that of
$cb^{\rm e}v$ over $Kv$. We have that $v(\frac{a_0}{cb^{\rm e}}-1)>0$.
So there exists $a_1\in \widetilde{K}$ with residue $1$ and such that
$a_1^{\rm e}= \frac{a_0}{cb^{\rm e}}$. Then for $a:=
a_1 b$, we find that $v(b-a)=vb+ v(a_1-1)>vb$ and 
$ca^{\rm e}=a_0\,$, hence $a\in\tilde K$. It follows that 
$va=vb$ and $ca^{\rm e}v=cb^{\rm e}v$. By the foregoing lemma, this 
shows that $a$ is homogeneous over $K$.
\pars
Now assume that also $\tilde{a}\in\tilde K$ satisfies $v(b-\tilde{a})>vb$. Then 
$v\tilde{a}=vb$, whence $(vK(\tilde{a}):vK)\geq\rme$. Further, $v(cb^{\rm e}-
\tilde{a}^{\rm e})>0$, whence $c\tilde{a}^{\rm e}v=cb^{\rm e}v$
and $[K(\tilde{a})v:Kv]\geq\rmf$. Therefore, $[K(\tilde{a}):K]\geq\rme\rmf$. We note that
$v(\tilde{a}-a)\geq\min\{v(b-a),v(b-\tilde{a})\}>vb=va=v\tilde{a}$.
\pars
Finally, assume in addition that $\tilde{a}\in K(a)$. Then 
\[
\rme\cdot\rmf\>\leq\> [K(\tilde{a}):K] \>\leq\> [K(a):K] \>=\> \rme\cdot\rmf\>,
\]
showing that $K(\tilde{a})=K(a)$. Thus for every $\sigma\in\Gal K$ we have that 
$\sigma \tilde{a}\ne \tilde{a}$ if and only if $\sigma a\ne a$. If this is the case, then 
\[
v(\tilde{a}-\sigma\tilde{a})\>=\> v(\tilde{a}-a+a-\sigma a+\sigma a-\sigma\tilde{a})\>.
\]
As $(K(a)|K,v)$ is unibranched and $K(\tilde{a})= K(a)$, we know that 
\[
v(\sigma a-\sigma \tilde{a})\>=\>v\sigma(a-\tilde{a})\>=\>v(a-\tilde{a})\> >\>va
\>=\>\Kras(a,K)\>\geq\>v(a-\sigma a)\>.
\]
Consequently, $v(\tilde{a}-\sigma \tilde{a})=v(a-\sigma a)$ and therefore, 
$\Kras(\tilde{a},K)=\Kras(a,K)=va=v\tilde{a}$. This proves that also $a$ is homogeneous 
over $K$.
\end{proof}

\mn
%
%
%
\subsection{Key sequences}
\mbox{ }\sn
We will work with a variant of the notion of ``homogeneous sequence'' which was introduced 
in \cite{KTrans}, and the stronger notion of ``key sequence''. Let $(K(x)|K,v)$ be any
extension of valued fields. We fix an extension of $v$ to $\widetilde{K(x)}$.

\pars
Let $S$ be an initial segment of $\N$, that is, $S=\N$ or
$S=\{1,\ldots,n\}$ for some $n\in\N$. A sequence
\[
{\eu S}\>:=\> (a_i)_{i\in S}
\]
of elements in $\widetilde{K}$  will be called a \bfind{key sequence for $(K(x)|K,v)$} if
\sn
{\bf (KS1)} \ \ $a_1\in K$, 
\sn
{\bf (KS2)} \ \ if $1<i\in S$, then $K(a_i)=K(\tilde{a}_i)$ and $v(x-a_i)\geq 
v(x-\tilde{a}_i)$ for some $\tilde{a}_i$
such that $\tilde{a}_i-a_{i-1}$ is a homogeneous approximation of $x-a_{i-1}$ over $K$,
\sn
{\bf (KS3)} \ \ $v(x-a_i)=\max v(x-K(a_i))$, unless $v(x-K(a_i))$ has no largest element,
in which case $i$ is the last element of $S$.

\pars
Further, ${\eu S}$ will be called a \bfind{pre-complete key sequence} if in addition to 
the above, $S=\N$ or the following conditions are satisfied:
\sn
{\bf (CKS1)} \ \ if $x\in\widetilde{K}$, then $S$ is finite, and if $n$ is its last element, 
then $a_n=x$,
\sn
{\bf (CKS2)} \ \ if $x$ is transcendental over $K$ and $S= \{1,\dotsc,n\}$, then 
either $v(x-a_n)=\max v(x-\widetilde{K})$, or $x$ is limit of a pseudo Cauchy 
sequence of transcendental type in $(K(a_n),v)$,
\sn
and it will be called \bfind{complete} if the second case in (CKS2) does not appear.

\sn
We call $S$ the \bfind{support} of the sequence $\eu S$. We set
\[
K_{\eu S}\>:=\>K(a_i\mid i\in S)\>.
\]
If ${\eu S}$ is the empty sequence, then $K_{\eu S}=K$.

From the above definitions, the following is obvious:
\begin{lemma}                               \label{S'S}
Assume that $S'\subseteq S$ are initial segments of $\N$. If $(a_i)_{i\in S}$ 
is a key sequence for $(K(x)|K,v)$, then so is $(a_i)_{i\in S'}$.
\end{lemma}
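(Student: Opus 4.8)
The statement to prove is Lemma~\ref{S'S}: if $S' \subseteq S$ are initial segments of $\N$ and $(a_i)_{i\in S}$ is a key sequence for $(K(x)|K,v)$, then so is $(a_i)_{i\in S'}$.

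The plan is to verify the three defining conditions (KS1), (KS2), (KS3) for the truncated sequence $(a_i)_{i \in S'}$, using that they already hold for $(a_i)_{i \in S}$. Since $S' \subseteq S$ and both are initial segments of $\N$, either $S' = S$ (nothing to prove) or $S' = \{1, \dots, m\}$ for some $m$, with $m$ being the last element of $S'$. The elements $a_i$ for $i \in S'$ are literally the same elements as in the original sequence.

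First, (KS1) is immediate: it only concerns $a_1$, and $1 \in S'$ since $S'$ is a nonempty initial segment (assuming $S'$ is nonempty; if empty the claim is vacuous), so $a_1 \in K$ holds as before. Next, (KS2) concerns indices $1 < i \in S'$; for each such $i$ we have $i \in S$ and $i-1 \in S'\subseteq S$, so the witnessing element $\tilde a_i$ with $K(a_i)=K(\tilde a_i)$, $v(x-a_i)\ge v(x-\tilde a_i)$, and $\tilde a_i - a_{i-1}$ a homogeneous approximation of $x - a_{i-1}$ over $K$ is exactly the one provided by (KS2) for the original sequence. Nothing changes. Finally, (KS3): for $i \in S'$ not the last element of $S'$, such $i$ is also not the last element of $S$, so $v(x-a_i) = \max v(x - K(a_i))$ as required. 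The only subtle point is the last element $m$ of $S'$: condition (KS3) allows $v(x-a_i)=\max v(x-K(a_i))$ \emph{or} that $v(x-K(a_i))$ has no largest element (only permitted for the last element). If $m$ is not the last element of $S$, then the original (KS3) gives $v(x-a_m)=\max v(x-K(a_m))$, which satisfies the condition a fortiori. If $m$ is the last element of $S$ (i.e. $S'=S$), we are in the trivial case. So (KS3) holds for $(a_i)_{i\in S'}$.

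There is essentially no obstacle here — the lemma is deliberately stated as ``obvious'' in the text, and the content is just that truncating an initial segment preserves each of the three conditions, with the mild observation that the more permissive clause in (KS3) for the last element causes no trouble because a truncation's last element either inherited the stronger ``maximum exists'' property or coincides with the original last element. I would write the proof in one short paragraph along these lines.

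Here is the proof:

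\begin{proof}
We may assume $S'\ne S$ (otherwise there is nothing to prove) and $S'\ne\emptyset$;
then $S'=\{1,\dots,m\}$ for some $m\in\N$ with $m$ not the last element of $S$ (if
$S$ is finite). Since $S'$ is a nonempty initial segment, $1\in S'$, so (KS1) holds
for $(a_i)_{i\in S'}$ because it holds for $(a_i)_{i\in S}$. For $1<i\in S'$ we have
$i\in S$ and $i-1\in S'\subseteq S$, so the element $\tilde a_i$ witnessing (KS2) for
the sequence $(a_i)_{i\in S}$ also witnesses it for $(a_i)_{i\in S'}$; thus (KS2)
holds. Finally, if $i\in S'$ is not the last element of $S'$, then $i$ is not the
last element of $S$ either, so (KS3) for $(a_i)_{i\in S}$ gives
$v(x-a_i)=\max v(x-K(a_i))$. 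It remains to consider the last element $m$ of $S'$;
since $m$ is not the last element of $S$, the same reasoning gives
$v(x-a_m)=\max v(x-K(a_m))$, so (KS3) holds for $(a_i)_{i\in S'}$ as well. Therefore
$(a_i)_{i\in S'}$ is a key sequence for $(K(x)|K,v)$.
\end{proof}
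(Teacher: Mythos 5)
Your proof is correct and matches the paper's (unstated) intent: the paper introduces the lemma with ``From the above definitions, the following is obvious'' and omits the proof, and your argument is exactly the direct verification of (KS1)--(KS3) for the truncated sequence that this remark alludes to, including the only non-trivial observation that the permissive ``last element'' clause in (KS3) only becomes more restrictive under truncation.
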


\pars
From the definition of the key sequence $(a_i)_{i\in S}$, we obtain:
\begin{lemma}                               \label{kspcs}
Take a key sequence $(a_i)_{i\in S}$. Then the following statements hold:
\n
1) \ For every $i\in S$, $a_i\in K\sep$.
\sn
2) \ For every $i\in S$, 
\begin{equation}                           \label{>}
v(x-a_i)\> >\> v(x-a_{i-1})\quad\mbox{ if } 1<i\in S
\end{equation}
and 
\begin{equation}                           \label{notin}
a_i\>\notin\> K(a_{i-1})\>.
\end{equation}
\sn
3) \ If $i,j\in S$ with $i<j$, then
\begin{equation}                            \label{pcs}
v(x-a_j)\> >\>v(x-a_i)\>=\>v(a_{i+1}-a_i)\>.
\end{equation}
If $S=\N$, then $(a_i)_{i\in S}$ is a pseudo Cauchy sequence in
$K_{\eu S}$ with pseudo limit $x$.
\end{lemma}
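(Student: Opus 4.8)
The plan is to derive all three assertions directly from the defining conditions (KS1)--(KS3) of a key sequence together with the ultrametric inequality; no input beyond the definition of ``homogeneous approximation'' is required, so the proof is essentially bookkeeping.

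For part 1) I would induct on $i$. The base case $a_1\in K\subseteq K\sep$ is (KS1). In the inductive step, (KS2) provides $\tilde a_i$ with $K(a_i)=K(\tilde a_i)$ such that $\tilde a_i-a_{i-1}$ is a homogeneous approximation of $x-a_{i-1}$ over $K$; since every element homogeneous over $K$ lies in $K\sep$ by definition, $\tilde a_i-a_{i-1}\in K\sep$, and adding $a_{i-1}\in K\sep$ (inductive hypothesis) gives $\tilde a_i\in K\sep$, hence $a_i\in K(a_i)=K(\tilde a_i)\subseteq K\sep$.

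For part 2), the inequality (\ref{>}) falls out of (KS2): applying the definition of homogeneous approximation to $b:=x-a_{i-1}$ and $a:=\tilde a_i-a_{i-1}$ gives $v(x-\tilde a_i)=v(b-a)>vb=v(x-a_{i-1})$, and (KS2) supplies $v(x-a_i)\geq v(x-\tilde a_i)$. For (\ref{notin}) I would argue by contradiction: if $a_i\in K(a_{i-1})$, then since $i\in S$ with $i>i-1$ the index $i-1$ is not the last element of $S$, so (KS3) applied to $i-1$ forces $v(x-a_{i-1})=\max v(x-K(a_{i-1}))\geq v(x-a_i)$, contradicting (\ref{>}). This interaction with (KS3) --- in particular the observation that $i-1$ cannot be the last index precisely because $i\in S$ --- is the one place that needs a little care, and is the main (mild) obstacle.

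For part 3), iterating (\ref{>}) along $i,i+1,\dots,j$ yields $v(x-a_j)>v(x-a_i)$ for $i<j$; then for any $i<j$ the identity $a_j-a_i=(x-a_i)-(x-a_j)$ combined with $v(x-a_j)>v(x-a_i)$ gives $v(a_j-a_i)=v(x-a_i)$, which specializes to (\ref{pcs}) at $j=i+1$. Finally, when $S=\N$ the $a_i$ manifestly lie in $K_{\eu S}=K(a_i\mid i\in S)$, and for $i<j<k$ the values just computed give $v(a_k-a_j)=v(x-a_j)>v(x-a_i)=v(a_j-a_i)$, so $(a_i)_{i\in\N}$ is a pseudo Cauchy sequence; since $v(x-a_i)$ is strictly increasing in $i$, $x$ is a pseudo limit of it. Everything here is elementary ultrametric manipulation.
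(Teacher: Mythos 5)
Your proposal is correct and matches the paper's proof essentially step for step: the same induction for part 1), the same computation via the definition of homogeneous approximation for (\ref{>}), the same derivation of (\ref{notin}) from (KS3), and the same ultrametric bookkeeping for part 3). The only difference is that you spell out explicitly why $i-1$ is not the last index of $S$ (so that the first alternative of (KS3) applies), which the paper leaves implicit.
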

\begin{proof}
1): \ We proceed by induction on $i\in S$. By (KS1), $a_1\in K\subseteq K\sep$.
Assume that $1<i\in S$ and we have already shown that $a_{i-1}\in K\sep$. 
By (KS2), $\tilde{a}_i-a_{i-1}\in K\sep$, whence $\tilde{a}_i\in K\sep$ and
$a_i\in K(\tilde{a}_i)\subseteq K\sep$. 

\sn
2): \ By (KS2), for $1<i\in S$ we have:
\[
v(x-a_i)\>\geq\>v(x-\tilde{a}_i)\>=\>v(x-a_{i-1}-(\tilde{a}_i-a_{i-1}))
\> >\>v(x-a_{i-1})\>.
\]
This proves (\ref{>}). Assertion (\ref{notin}) follows from (KS3) since $v(x-a_i)>
v(x-a_{i-1})=\max v(x-K(a_{i-1}))$.

\sn
3): \ Take $i,j\in S$ with $1\leq i<j$. Then (\ref{pcs}) follows by induction from (\ref{>}). 
Now assume that $S=\N$. Then for all $k>j>i\geq 1$,
\[
v(x-a_k)\> >\>v(x-a_j) \> >\>v(x-a_i)
\]
and therefore,
\begin{eqnarray*}
v(a_k-a_j) & = & \min\{v(x-a_k),v(x-a_j)\}\>=\>v(x-a_j)\\
 & > & v(x-a_i) \>=\> \min\{v(x-a_j),v(x-a_i)\}\>=\>v(a_j-a_i)\>.
\end{eqnarray*}
This shows that $(a_i)_{i\in S}$ is a pseudo Cauchy sequence. The
equality in (\ref{pcs}) shows that $x$ is a pseudo limit of this sequence.
\end{proof}

Let us also observe the following:
\begin{lemma}                               \label{xx}
Let $x,z$ be elements in some valued field extension of $(K,v)$ that contains $\tilde K$.
Take a key sequence $(a_i)_{i\in S}$ for $(K(x)|K,v)$. Then the following assertions hold:
\sn
1) \ Assume that $v(x-z)> v(x-a_i)$ for all $i\in S$. Then $(a_i)_{i\in S}$ is also a key
sequence for $(K(z)|K,v)$.
\sn
2) \ Take $k\in S$. Then $(a_i)_{i\leq k}$ is a complete key sequence for $(K(a_k)|K,v)$. 
\end{lemma}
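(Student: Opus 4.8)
The plan is to verify, for part~2), that $(a_i)_{i\le k}$ satisfies the axioms (KS1)--(KS3) with $a_k$ playing the role of $x$, and then observe that the completeness clauses (CKS1)--(CKS2) hold automatically; part~1) is handled by the same mechanism with $z$ in place of $a_k$. Axiom (KS1) does not involve $x$, so it holds verbatim in both cases.

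For (KS2) I would fix $1<i\le k$ and reuse the witness $\tilde a_i$ coming from the given key sequence for $(K(x)|K,v)$, so that $K(a_i)=K(\tilde a_i)$, $v(x-a_i)\ge v(x-\tilde a_i)$ and $\tilde a_i-a_{i-1}$ is a homogeneous approximation of $x-a_{i-1}$ over $K$. Combining these with Lemma~\ref{kspcs} gives the chain $v(x-a_k)>v(x-a_i)\ge v(x-\tilde a_i)>v(x-a_{i-1})$ (read $v(x-a_k)$ as $\infty$ when $i=k$). Then Lemma~\ref{homup}(3), applied with $b=x-a_{i-1}$, $b'=a_k-a_{i-1}$ and $a=\tilde a_i-a_{i-1}$ (its hypothesis $v(b-b')=v(x-a_k)\ge v(x-\tilde a_i)=v(b-a)$ being exactly this chain), shows that $\tilde a_i-a_{i-1}$ is a homogeneous approximation of $a_k-a_{i-1}$ over $K$; and the same chain forces $v(a_k-a_i)=v(x-a_i)\ge v(x-\tilde a_i)=v(a_k-\tilde a_i)$ by the ultrametric law. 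Part~1) is word for word the same with $z$ replacing $a_k$, using $v(x-z)>v(x-a_i)\ge v(x-\tilde a_i)$.

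For (KS3), if $i<k$ then $i$ is not the last index of $S$, so (KS3) for the $x$-sequence says $v(x-K(a_i))$ has largest element $v(x-a_i)$; since $v(x-a_k)>v(x-a_i)$ we get $v(x-a_k)>v(x-c)$ for every $c\in K(a_i)$, and \cite[part 4) of Lemma 2.9]{K66} then yields $v(a_k-K(a_i))=v(x-K(a_i))$, with largest element $v(a_k-a_i)=v(x-a_i)$. For $i=k$ the axiom is trivial because $a_k\in K(a_k)$. This completes the verification that $(a_i)_{i\le k}$ is a key sequence for $(K(a_k)|K,v)$; since $a_k\in\widetilde{K}$, its support $\{1,\dots,k\}$ is finite with last element $k$ and $a_k=a_k$, so (CKS1) holds, while (CKS2) and the completeness requirement are vacuous, giving a complete key sequence. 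For part~1), (KS3) goes through identically whenever $v(x-K(a_i))$ has a largest element; the remaining case is $i$ the last index of $S$ with $v(x-K(a_i))$ having no largest element, where I would use Lemma~\ref{nomax}(3) to write $x$ as the limit of a pseudo Cauchy sequence in $(K(a_i),v)$ with no limit there, transfer this to $z$ using that $v(x-z)$ exceeds $v(x-a_i)$, and conclude that $v(z-K(a_i))$ has no largest element, so (KS3) again holds with $i$ last.

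I expect the only delicate point to be this last case of part~1): one must rule out that $z$ ends up lying strictly closer to $K(a_i)$ than $x$ does, which is precisely what the pseudo Cauchy sequence argument controls. Everything else is bookkeeping with the ultrametric inequality and the already-established Lemmas~\ref{homup} and~\ref{kspcs} together with the cited elementary properties of the sets $v(\,\cdot\,-K(a_i))$.
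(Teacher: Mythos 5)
Your plan for part 2) and for part 1) in the case where every $v(x-K(a_i))$ has a maximum is essentially the paper's argument. For (KS2) the paper also uses part 3) of Lemma~\ref{homup} with the same choice $b=x-a_{i-1}$, $b'=a_k-a_{i-1}$ (resp.\ $b'=z-a_{i-1}$), $a=\tilde a_i-a_{i-1}$; the only presentational differences are that the paper proves part 1) first and then derives part 2) by applying part 1) to the truncated sequence $(a_i)_{i<k}$ together with a direct check at $i=k$, and that for (KS3) the paper uses a one-line ultrametric contradiction where you invoke \cite[part 4) of Lemma 2.9]{K66} — both are fine and do the same work. Your bookkeeping for (KS2), including the chain $v(x-a_k)>v(x-a_i)\ge v(x-\tilde a_i)>v(x-a_{i-1})$, is correct (with the slight abuse that for $i=k$ the quantity playing the role of $v(x-a_k)$ is really $v(a_k-a_k)=\infty$).

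The one place where your proposal goes beyond the paper — the clause of (KS3) for part 1) where $i$ is the last index and $v(x-K(a_i))$ has no largest element — does not go through as sketched. The hypothesis $v(x-z)>v(x-a_i)$ gives a single value, not an upper bound for $v(x-K(a_i))$. When $v(x-K(a_i))$ has no maximum, there may well exist $c\in K(a_i)$ with $v(x-c)>v(x-z)$, and then $v(z-c)=v(x-z)>v(x-a_i)=v(z-a_i)$, so $z$ is \emph{not} a pseudo limit of the pseudo Cauchy sequence you get from Lemma~\ref{nomax}(3), $v(z-K(a_i))$ \emph{can} acquire a maximum (namely $v(x-z)$), and that maximum is strictly larger than $v(z-a_i)$ — breaking (KS3) for $z$. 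So the "transfer to $z$" you propose is not supported by the stated hypothesis. It is worth noting that the paper's own proof of part 1) quietly uses the identity $v(x-a_i)=\max v(x-K(a_i))$ and therefore also covers only the case where that maximum exists; every application of part 1) in the paper (inside the proof of part 2), in Proposition~\ref{SNpure}, and in the proof of Theorem~\ref{SCS2}) applies the lemma to a key sequence in which every $v(x-K(a_i))$ does have a maximum, so the missing case never arises. The safe fix is to restrict the claim of part 1) to that situation rather than to attempt the pseudo Cauchy sequence transfer.
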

\begin{proof}
1): \ It follows from part 3) of Lemma~\ref{homup} that if (KS2) holds, then it also holds
with $z$ in place of $x$. We now show the same for (KS3). Since $v(x-z)> 
v(x-a_i)$, it follows that $v(z-a_i)=\min\{v(x-a_i),v(x-z)\}=v(x-a_i)$. In order to 
complete our proof, we need to show that $v(z-y)\leq v(z-a_i)$
for $y\in K(a_i)$. Suppose otherwise. Then $v(x-y)\geq\min\{v(x-z),v(z-y)\}>
v(x-a_i)=\max v(x-K(a_i))$, contradiction. This shows that (KS3) also holds 
with $z$ in place of $x$.

\sn
2): \ By Lemma~\ref{S'S}, $(a_i)_{i< k}$ is a key sequence for $(K(x)|K,v)$. Since 
$v(x-a_k)>v(x-a_i)$ for $i<k$, part 1) shows that it is also a key sequence for 
$(K(a_k)|K,v)$. Next, we show that (KS2) also holds for $a_k$ in place of $x$
and $k$ in place of $i$. 
Since $(a_i)_{i\in S}$ is a key sequence for $(K(x)|K,v)$, we know already that 
$K(a_k)=K(\tilde{a}_k)$, and it remains to show that $a:=\tilde{a}_k-a_{k-1}$ is a 
homogeneous approximation of $b':=a_k-a_{k-1}$. With $b:=x-a_{k-1}$, we compute:
\[
v(b-b')\>=\> v(x-a_k) \>\geq\> v(x-\tilde{a}_k)\>=\> v(b-a)\>.
\]
Hence the required result follows from part 3) of Lemma~\ref{homup}. Also (KS3) is 
satisfied for $a_k$ in place of $x$ and $k$ in place of $i$ because $v(a_k-a_k)=\infty=
\max v(x-K(a_k))$. This also proves that by definition, $(a_i)_{i\leq k}$
is a complete key sequence for $(K(a_k)|K,v)$.
\end{proof}

What is special about key sequences is described by the following lemma:
\begin{lemma}                               \label{KSinL}
Assume that $(a_i)_{i\in S}$ is a key sequence for $(K(x)|K,v)$. Then
\begin{equation}                            \label{KSin}
K_{\eu S} \>\subseteq\> K(x)^h \>.
\end{equation}
For every $n\in S$, $a_1,\ldots,a_n\in K(a_n)^h$. 
If $S=\{1,\ldots,n\}$, then
\begin{equation}                            \label{in}
K_{\eu S}^h \>=\> K(a_n)^h\>.
\end{equation}
Hence if $(K,v)$ is henselian, then $K_{\eu S}=K(a_n)$.
\end{lemma}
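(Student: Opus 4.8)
The plan is to prove the three assertions in order, using induction on $n \in S$ for the statement about each $a_i$ lying in $K(a_n)^h$, and then deriving (\ref{KSin}) and (\ref{in}) as consequences.

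First I would set up the induction. For $n=1$ there is nothing to prove since $a_1 \in K$ by (KS1). Now assume $1 < n \in S$ and that $a_1,\ldots,a_{n-1} \in K(a_{n-1})^h$. The key observation is (KS2): there is an element $\tilde a_n$ with $K(a_n) = K(\tilde a_n)$ and $v(x-a_n) \geq v(x-\tilde a_n)$, such that $\tilde a_n - a_{n-1}$ is a homogeneous approximation of $x - a_{n-1}$ over $K$. By Lemma~\ref{krlkra}, $\tilde a_n - a_{n-1}$ lies in the henselization of $K(x - a_{n-1}) = K(x)$ with respect to any extension of $v$ to $\widetilde{K(x)}$. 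But here is the point where I need to be careful: I actually want $\tilde a_n - a_{n-1}$ to lie in $K(a_n)^h$, not merely in $K(x)^h$. To see this, note that the hypothesis of Lemma~\ref{krlkra} only requires $v(b-a) > vb$ where $b = x - a_{n-1}$ and $a = \tilde a_n - a_{n-1}$; this condition is $v(x - \tilde a_n) > v(x - a_{n-1})$, which holds by (KS2) combined with part 2) of Lemma~\ref{kspcs}. The same computation applies verbatim with $a_n$ itself in place of $x$: by part 2) of Lemma~\ref{xx}, $(a_i)_{i \leq n}$ is a complete key sequence for $(K(a_n)|K,v)$, and in particular $\tilde a_n - a_{n-1}$ is (still) a homogeneous approximation of $a_n - a_{n-1}$ over $K$; applying Lemma~\ref{krlkra} with $b = a_n - a_{n-1}$ gives $\tilde a_n - a_{n-1} \in K(a_n)^h$, hence $\tilde a_n \in K(a_n)^h$ and therefore $a_{n-1} \in K(\tilde a_n)^h = K(a_n)^h$. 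It remains to move $a_1, \ldots, a_{n-2}$ inside: by the induction hypothesis they lie in $K(a_{n-1})^h$, and since $K(a_{n-1}) \subseteq K(a_n)^h$ we get $K(a_{n-1})^h \subseteq K(a_n)^h$, so all of $a_1, \ldots, a_{n-1} \in K(a_n)^h$. This completes the induction and proves the second assertion.

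Next I would deduce (\ref{KSin}). If $S = \{1,\ldots,n\}$ is finite, then $K_{\eu S} = K(a_1,\ldots,a_n) \subseteq K(a_n)^h \subseteq K(x)^h$, the last inclusion because $a_n \in K(x)^h$ (apply the argument above with $n$ as the top index, or simply use that $\tilde a_n - a_{n-1} \in K(x)^h$ by Lemma~\ref{krlkra} with $b = x - a_{n-1}$, hence $a_n \in K(\tilde a_n) \subseteq K(x)^h$, combined with $a_{n-1} \in K(x)^h$ by the same reasoning done inductively). If $S = \N$, then every $a_n \in K(x)^h$ by the same argument, so $K_{\eu S} = \bigcup_n K(a_1,\ldots,a_n) \subseteq K(x)^h$.

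Finally, for $S = \{1,\ldots,n\}$ I would prove (\ref{in}). We have $K_{\eu S} = K(a_1,\ldots,a_n) \supseteq K(a_n)$, so $K_{\eu S}^h \supseteq K(a_n)^h$; conversely $a_1,\ldots,a_{n-1} \in K(a_n)^h$ as shown, so $K_{\eu S} \subseteq K(a_n)^h$, whence $K_{\eu S}^h \subseteq K(a_n)^h$, giving equality. The last sentence follows immediately: if $(K,v)$ is henselian then $K(a_n)$ is henselian (being a finite, hence algebraic, extension of the henselian field $K$), so $K(a_n)^h = K(a_n)$, and then $K_{\eu S} \subseteq K_{\eu S}^h = K(a_n)^h = K(a_n) \subseteq K_{\eu S}$, forcing $K_{\eu S} = K(a_n)$.

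I expect the main obstacle to be exactly the point flagged in the first paragraph: Lemma~\ref{krlkra} as stated gives membership in the henselization of $K(b)$, so one must check that its hypotheses are met with $b = a_n - a_{n-1}$ (equivalently, $K(b) = K(a_n)$) rather than with $b = x - a_{n-1}$, and for that one invokes part 2) of Lemma~\ref{xx} to transfer the key-sequence data from $x$ to $a_n$. Everything else is bookkeeping with henselizations and the tower $K \subseteq K(a_1) \subseteq \cdots \subseteq K(a_n)$.
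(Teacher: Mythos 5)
Your overall strategy is in the right spirit, but there is a genuine circularity at the core of your induction for the second assertion. After invoking Lemma~\ref{krlkra} with $b = a_n - a_{n-1}$ and $a = \tilde a_n - a_{n-1}$, what you actually obtain is $\tilde a_n - a_{n-1} \in K(a_n - a_{n-1})^h$, \emph{not} $\tilde a_n - a_{n-1} \in K(a_n)^h$. You appear to be identifying $K(a_n - a_{n-1})$ with $K(a_n)$ (you even write the parenthetical ``equivalently, $K(b) = K(a_n)$''), but this identification is false in general: it would require $a_{n-1} \in K(a_n)$, which is exactly what you are in the middle of trying to prove and which only becomes true (and only in the henselian case) once the lemma is established. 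To pass from $K(a_n - a_{n-1})^h$ to $K(a_n)^h$ you need $a_n - a_{n-1} \in K(a_n)^h$, i.e.\ $a_{n-1} \in K(a_n)^h$ --- the very conclusion sought. So your outer induction on $n$, combined with a single application of Lemma~\ref{krlkra} at the top index, does not close.

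The paper avoids this by running an \emph{inner} induction on $i$ from $1$ to $n$ inside the fixed field $K(a_n)^h$ (after replacing $x$ by $a_n$ via the reduction to a key sequence for $(K(a_n)|K,v)$, which you correctly identified via Lemma~\ref{xx}). At stage $i$, the inner inductive hypothesis $a_{i-1}\in K(a_n)^h$ gives $a_n - a_{i-1}\in K(a_n)^h$, hence $K(a_n - a_{i-1})\subseteq K(a_n)^h$ and therefore $K(a_n - a_{i-1})^h\subseteq K(a_n)^h$; only then does the Krasner-type Lemma~\ref{krlkra} land $\tilde a_i - a_{i-1}$ in $K(a_n)^h$. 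The same inner induction (with $x$ in place of $a_n$) is what establishes (\ref{KSin}); there too your line ``$K(x - a_{n-1}) = K(x)$'' is incorrect as an equality of fields --- what holds is $K(x - a_{n-1})^h \subseteq K(x)^h$ once the inductive hypothesis $a_{n-1}\in K(x)^h$ is in hand, as your later parenthetical ``by the same reasoning done inductively'' in fact suggests. In short: the missing ingredient is the step-by-step induction on $i$ inside the fixed henselization; a one-shot application at the top index $n$ cannot replace it.
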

\begin{proof}
By induction on $i\in S$, we show that $a_i\in K(x)^h$. By (KS1), $a_1\in
K\subseteq K(x)^h$. Assume that $1<i\in S$ and we have already shown that $a_{i-1}
\>\in\>K(x)^h$. As $\tilde{a}_i-a_{i-1}$ is a homogeneous approximation of $x-a_{i-1}\in
K(x)^h$ over $K$ for $\tilde{a}_i$ as in (KS2), we know from Lemma~\ref{krlkra} that
\[
\tilde{a}_i-a_{i-1}\>\in\>K(x-a_{i-1})^h \>\subseteq\> K(x)^h
\]
and hence also $\tilde{a}_i\in K(x)^h$. This implies that $a_i\in K(\tilde{a}_i)\subset
K(x)^h$, which proves (\ref{KSin}). 

Now all other assertions follow when we replace $x$ by $a_n$ in the above argument,
using the fact that by Lemma~\ref{S'S} with $S'=\{1,\ldots,n\}$, 
$(a_i)_{i\leq n}$ is a key sequence for $(K(a_n)|K,v)$.
\end{proof}

\begin{lemma}                           \label{lemKS4}
Every key sequence $(a_i)_{i\in S}$ has property 
\sn
{\bf (KS4)} \ \ $K(a_{i-1})^h\subsetneq K(a_i)^h$ if $1<i\in S$.
\end{lemma}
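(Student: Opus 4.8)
The plan is to prove the two parts of (KS4) separately. First I would establish the inclusion $K(a_{i-1})^h\subseteq K(a_i)^h$: by Lemma~\ref{KSinL} we have $a_{i-1}\in K(a_i)^h$ (all henselizations being taken inside the once‑fixed algebraic closure $\widetilde{K(x)}$ with its chosen extension of $v$), so $K(a_i)^h$ is a henselian field containing $K(a_{i-1})$ and therefore contains the henselization $K(a_{i-1})^h$.

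Next I would show that the extension $(K(a_i)|K(a_{i-1}),v)$ is \emph{not} immediate. Since $1<i\in S$, the index $i-1$ is not the last element of $S$, so (KS3) applies and gives $v(x-a_{i-1})=\max v(x-K(a_{i-1}))$; in particular $v(x-K(a_{i-1}))$ has a largest element. By~(\ref{>}) in Lemma~\ref{kspcs} we have $v(x-a_i)>v(x-a_{i-1})$, hence $v(x-a_i)>v(x-c)$ for every $c\in K(a_{i-1})$. Applying part~2) of Lemma~\ref{nomax} with $K(a_{i-1})$ in place of $K$ and $a_i$ in place of $y$ then yields that $(K(a_i)|K(a_{i-1}),v)$ is not immediate.

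Finally I would combine these two facts. Since the extension is not immediate, $vK(a_{i-1})\subsetneq vK(a_i)$ or $K(a_{i-1})v\subsetneq K(a_i)v$. As henselizations are immediate extensions, $vK(a_j)^h=vK(a_j)$ and $K(a_j)^hv=K(a_j)v$ for $j\in\{i-1,i\}$, so $K(a_{i-1})^h\ne K(a_i)^h$; together with the inclusion from the first step this gives $K(a_{i-1})^h\subsetneq K(a_i)^h$. I do not expect any real obstacle here: the argument is a short reduction to Lemmas~\ref{KSinL} and~\ref{nomax} together with the standard fact that henselizations are immediate, and the only point requiring minor care is that all henselizations must be read as subfields of the fixed extension of $v$ to $\widetilde{K(x)}$, so that the various inclusions make literal sense.
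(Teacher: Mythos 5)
Your proof is correct and rests on the same core ingredients as the paper's---Lemma~\ref{nomax}, condition (KS3), the strict inequality~(\ref{>}) from Lemma~\ref{kspcs}, and the immediacy of henselizations---but it is organized as a direct argument where the paper uses a reductio. The paper assumes $K(a_{i-1})^h=K(a_i)^h$, notes that $K(a_{i-1})(a_i)|K(a_{i-1})$ is then immediate, applies part~1) of Lemma~\ref{nomax} to get an element $c\in K(a_{i-1})$ with $v(a_i-c)>v(a_i-a_{i-1})=v(x-a_{i-1})$, and deduces $v(x-c)>v(x-a_{i-1})$, contradicting (KS3). You run the implication forwards: (KS3) and~(\ref{>}) supply the hypotheses of part~2) of Lemma~\ref{nomax} (which is the contrapositive of part~1)), so the extension $(K(a_{i-1},a_i)|K(a_{i-1}),v)$ is not immediate, and since henselizations are immediate the two henselizations must differ; combined with the containment coming from Lemma~\ref{KSinL} (which the paper leaves implicit), this gives the strict inclusion. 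Your version is a touch more self-contained because it treats both the inclusion and the inequality explicitly. One small point to tighten: after applying Lemma~\ref{nomax} the extension you obtain is $(K(a_{i-1},a_i)|K(a_{i-1}),v)$, not $(K(a_i)|K(a_{i-1}),v)$; and the step from $vK(a_{i-1},a_i)$ and $K(a_{i-1},a_i)v$ to $vK(a_i)^h$ and $K(a_i)^hv$ uses $K(a_{i-1},a_i)\subseteq K(a_i)^h$ (itself a consequence of $a_{i-1}\in K(a_i)^h$ from Lemma~\ref{KSinL}) together with the immediacy of $K(a_i)^h|K(a_i)$, which is worth spelling out.
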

\begin{proof}
Suppose that $1<i\in S$ and $K(a_{i-1})^h=K(a_i)^h$. Then $a_i\in K(a_{i-1})^h$ and since
$(K(a_{i-1})^h|K(a_{i-1}),v)$ is immediate, by part 1) of Lemma~\ref{nomax}
the set $v(a_i-K(a_{i-1}))$ has no maximum. 
Hence there exists $c\in K(a_{i-1})$ such that $v(a_i-c)>v(a_i-a_{i-1})$. On the other 
hand, $v(x-a_i)>v(x-a_{i-1})$ by Lemma~\ref{kspcs}, whence 
\[
v(a_i-a_{i-1})\>=\>\min\{v(x-a_i),v(x-a_{i-1})\}\>=\>v(x-a_{i-1})\>. 
\]
We conclude that
\[
v(x-c)\>=\>\min\{v(x-a_i),v(a_i-c)\}\> >\>v(x-a_{i-1}) 
\]
in contradiction to (KS3). This proves our assertion.
\end{proof}

\begin{proposition}                     \label{KS5,6}
Take a henselian field $(K,v)$.
Assume that $(a_i)_{i\in S}$  is a key sequence for $(K(x)|K,v)$. Then it has the 
following additional properties:
\sn
{\bf (KS5)} \ \ $\Kras(a_i,K)=\Kras(a_i-a_{i-1},K)=v(x-a_{i-1})$ if $1<i\in S$,
\sn
{\bf (KS6)} \ \ if $1<i\in S$ and $z\in\tilde K$ such that $v(x-z)> v(x-a_{i-1})$, then
$[K(z):K]\geq [K(a_i):K]$.
\end{proposition}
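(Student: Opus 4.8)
The plan is to prove both (KS5) and (KS6) by combining the structure of a key sequence with the results on homogeneous approximations from Lemma~\ref{exkrap} and Lemma~\ref{homup}. Fix $1<i\in S$ and write $b:=x-a_{i-1}$, $b':=a_i-a_{i-1}$, and let $\tilde a_i$ be as in (KS2), so $a:=\tilde a_i-a_{i-1}$ is a homogeneous approximation of $b$ over $K$, $K(a_i)=K(\tilde a_i)$, and $v(x-a_i)\ge v(x-\tilde a_i)$. From Lemma~\ref{kspcs} we already know $v(x-a_i)>v(x-a_{i-1})$ and, since $v(b-a)=v(x-\tilde a_i)\le v(x-a_i)=v(b-b')$, part 3) of Lemma~\ref{homup} gives that $a$ is also a homogeneous approximation of $b'$ over $K$; in particular $va=vb'=v(x-a_{i-1})$ and $v(a-\sigma a)=va$ for all $\sigma\in\Gal K$ with $\sigma a\ne a$, by part 2) of Lemma~\ref{homup}.

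For (KS5), the key point is that $a$ being homogeneous over $K$ means $va=\Kras(a,K)$ by definition, and $a\in K(\tilde a_i)=K(a_i)$ while $\tilde a_i=a+a_{i-1}\in K(a_i)$, so $K(a)=K(\tilde a_i)=K(a_i)$ (here one uses that $a_{i-1}\in K(a_{i-1})\subseteq K(a_i)$, which holds since $(a_j)_{j\le i}$ is a key sequence and $K_{\eu S}$ is henselian-generated — more precisely $a_{i-1}\in K(a_i)^h$ by Lemma~\ref{KSinL}, but as $(K,v)$ is henselian we get $a_{i-1}\in K(a_i)$). Thus $\Kras(a_i-a_{i-1},K)=\Kras(a,K)=va=v(x-a_{i-1})$. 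It remains to identify $\Kras(a_i,K)$ with $\Kras(a_i-a_{i-1},K)$: for any $\sigma\in\Gal K$, $\sigma a_i-a_i=(\sigma a_i-\sigma a_{i-1})-(a_i-a_{i-1})+(\sigma a_{i-1}-a_{i-1})$, and since the extension $(K(a_{i-1})|K,v)$ is unibranched (being generated by an element of a key sequence, hence in $K\sep$ with unibranched extension — this follows from the homogeneity built into (KS2) and part 2) of Lemma~\ref{kspcs}) we have $v(\sigma a_{i-1}-a_{i-1})\ge v a_{i-1}$; a short valuation-theoretic comparison using $v(a-\sigma a)=va=v(x-a_{i-1})$ shows $v(\sigma a_i-a_i)=v(a-\sigma a)$ whenever $\sigma a_i\ne a_i$, giving $\Kras(a_i,K)=v(x-a_{i-1})$ as well.

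For (KS6), take $z\in\tilde K$ with $v(x-z)>v(x-a_{i-1})$. Then $v(z-a_{i-1})=\min\{v(x-z),v(x-a_{i-1})\}=v(x-a_{i-1})=vb'$, and moreover $v(b'-(z-a_{i-1}))=v(a_i-z)\ge\min\{v(x-a_i),v(x-z)\}>v(x-a_{i-1})=vb'$, so $z-a_{i-1}$ is a ``good approximation'' of $b'=a_i-a_{i-1}$ in the sense required by the last paragraph of Lemma~\ref{exkrap}. Apply that lemma with $\rme\cdot\rmf=[K(a):K]=[K(a_i):K]$ (using that $a$ is the homogeneous approximation of $b'$ produced, up to $K(a)=K(a_i)$) and $\tilde a:=z-a_{i-1}$: it yields $[K(\tilde a):K]\ge\rme\cdot\rmf=[K(a_i):K]$. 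Since $K(\tilde a)=K(z-a_{i-1})\subseteq K(z)$ (because $a_{i-1}\in K\subseteq\ldots$ — again $a_{i-1}$ lies in $K(a_{i-1})\subseteq K_{\eu S}$, and one checks $K(z-a_{i-1})=K(z,a_{i-1})\supseteq$ cannot directly be bounded, so instead observe $[K(z):K]\ge[K(z-a_{i-1}):K(a_{i-1})]\cdot 1$ needs care) — the clean route is: $z-a_{i-1}$ lies in $K(a_{i-1})(z)$, and by Lemma~\ref{exkrap} $[K(a_{i-1})(z-a_{i-1}):K(a_{i-1})]\ge\rme\cdot\rmf/[K(a_{i-1}):K]$ is the wrong normalization, so the correct statement applies Lemma~\ref{exkrap} over the base $K$ directly to $b'$ and $\tilde a$, giving $[K(\tilde a):K]\ge[K(a_i):K]$, and then $[K(z):K]\ge[K(z-a_{i-1}):K]=[K(\tilde a):K]\ge[K(a_i):K]$ since $a_{i-1}\in K$ would be needed — but $a_{i-1}\notin K$ in general, so one uses instead $K(z)=K(z-a_{i-1},a_{i-1})\supseteq K(z-a_{i-1})$ only after adjoining $a_{i-1}$; the honest bound is $[K(z):K]=[K(z,a_{i-1}):K]\ge[K(z-a_{i-1}):K]\ge[K(a_i):K]$ because $z\in K(z,a_{i-1})$ and $z-a_{i-1}\in K(z,a_{i-1})$ and one of these generates a field containing the other over $K(a_{i-1})$, whence $[K(z,a_{i-1}):K]\ge[K(z-a_{i-1}):K]$). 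I expect the main obstacle to be precisely this bookkeeping around $K(z)$ versus $K(z-a_{i-1})$ when $a_{i-1}\notin K$: the fix is to apply Lemma~\ref{exkrap} with $K(a_{i-1})$ (which by Lemma~\ref{KSinL} and henselianity of $K$ equals its own henselization-relative situation) but tracking that $[K(a_i):K(a_{i-1})]=\rme\cdot\rmf$ there, so that $[K(z):K(a_{i-1})]\ge[K(a_i):K(a_{i-1})]$ and hence $[K(z):K]\ge[K(z,a_{i-1}):K]\ge[K(a_i):K]$; the homogeneity of $a$ \emph{over $K(a_{i-1})$} (guaranteed since $a$ is homogeneous over $K$, hence over any extension by part 2) of Lemma~\ref{homup}) is what makes Lemma~\ref{exkrap} applicable over that base.
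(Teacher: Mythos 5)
Your proposal has the right raw materials (part 3 of Lemma~\ref{homup}, the decomposition of $\sigma a_i - a_i$, Lemma~\ref{KSinL} to get $a_{i-1}\in K(a_i)$), but the execution has genuine gaps, and you are missing two key ingredients that the paper's proof (via the inductive Lemma~\ref{KS4,5lem}) relies on.

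For (KS5): You write $\Kras(a_i-a_{i-1},K)=\Kras(a,K)$ where $a=\tilde a_i-a_{i-1}$, but $a_i-a_{i-1}$ and $a$ are different elements unless $a_i=\tilde a_i$, which (KS2) does not guarantee; this equality is never justified. You also assert $K(a)=K(\tilde a_i)=K(a_i)$, which does not follow — you only have $K(a)\subseteq K(a_i)$. And the claim that a ``short valuation-theoretic comparison'' gives $v(\sigma a_i-a_i)=v(a-\sigma a)$ whenever $\sigma a_i\ne a_i$ is incorrect: the correct statement, which the paper establishes, is that $v(\sigma\tilde a_i-\tilde a_i)=\min\{v(a_{i-1}-\sigma a_{i-1}),\,v(a-\sigma a)\}\leq v(x-a_{i-1})$, and the first term can be strictly smaller. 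To control it one must invoke the \emph{inductive hypothesis} $\Kras(a_{i-1},K)=v(x-a_{i-2})<v(x-a_{i-1})$ — this is exactly why the paper packages the argument as an induction (Lemma~\ref{KS4,5lem} carries (KS5) for $j<i$ as a hypothesis). Your proof never uses this, and also never handles the correction $d:=a_i-\tilde a_i$, which the paper treats separately with the observation that $vd>v(x-a_{i-1})$ and henselianity gives $v(d-\sigma d)>v(x-a_{i-1})$.

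For (KS6): Your bookkeeping around $K(z)$ versus $K(z-a_{i-1})$ fails, and you essentially acknowledge this but the proposed fix has a sign error: you write $[K(z):K]\ge[K(z,a_{i-1}):K]$, but $K(z)\subseteq K(z,a_{i-1})$ gives the reverse inequality, and $[K(z):K(a_{i-1})]$ is meaningless unless $a_{i-1}\in K(z)$ (which is what you want, not what you have). The missing idea is to use Proposition~\ref{vkras} (the Krasner's Lemma variant) directly: once (KS5) is established, $v(z-a_i)\geq\min\{v(x-z),v(x-a_i)\}>v(x-a_{i-1})=\Kras(a_i,K)$, whence $a_i\in K(z)$ and $[K(z):K]\geq[K(a_i):K]$ immediately. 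This avoids all the degree bookkeeping and Lemma~\ref{exkrap} entirely.
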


This proposition follows by induction on $i\in S\setminus\{1\}$ from a slightly more 
general result:
\begin{lemma}                     \label{KS4,5lem}
Take a henselian field $(K,v)$. Assume that $i>1$, $(a_j)_{j<i}$  is a key sequence for 
$(K(x)|K,v)$ with properties (KS5) and (KS6)), that $\tilde{a}_i-a_{i-1}$ is a 
homogeneous approximation of $x-a_{i-1}\,$,
and that $a_i\in K(\tilde{a}_i)$ with $v(x-a_i)\geq v(x-\tilde{a}_i)$. Then $K(a_i)=
K(\tilde{a}_i)$, and the sequence $(a_j)_{j\leq i}$ has properties (KS5) and (KS6).
\end{lemma}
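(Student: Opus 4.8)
The plan is to prove Lemma~\ref{KS4,5lem} by carefully unwinding the hypotheses and applying the machinery on homogeneous approximations developed above, especially Lemma~\ref{exkrap}. First I would set $b:=x-a_{i-1}$ and $b':=a_i-a_{i-1}$ and $a:=\tilde a_i-a_{i-1}$, so that by hypothesis $a$ is a homogeneous approximation of $b$ over $K$, meaning $v(b-a)>vb=va$ and $v(b-a)>\Kras(a,K)=va$. Since $a$ is homogeneous over $K$, we have $va=\Kras(a,K)$, and by Lemma~\ref{exkrap} (applied with the element $b$ in the role there, after extracting the data $\mathrm e,\mathrm f$ via Lemma~\ref{lstronghom} since $a$ is homogeneous, hence satisfies conditions a),b) of that lemma) we get $[K(a):K]=\mathrm e\cdot\mathrm f$ and this is the minimal possible degree for any element $\tilde a$ with $v(b-\tilde a)>vb$.

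Next, to show $K(a_i)=K(\tilde a_i)$: equivalently $K(a)=K(a_i-a_{i-1})$ since $a_{i-1}\in K$. Observe that $v(x-a_i)\geq v(x-\tilde a_i)$ translates to $v(b-b')\geq v(b-a)>vb$, so $b'$ also satisfies $v(b-b')>vb$, forcing $vb'=vb=va$ and hence $v(b'-a)\geq\min\{v(b-b'),v(b-a)\}>vb$. By the hypothesis $a_i\in K(\tilde a_i)$ we have $b'\in K(a)$, so $b'$ is an element of $K(a)$ with $v(b-b')>vb$; applying the final part of Lemma~\ref{exkrap} with $\tilde a=b'$ (and the original $a$) yields $K(b')=K(a)$, $b'$ is a homogeneous approximation of $b$ over $K$, and $\Kras(b',K)=\Kras(a,K)$. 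Translating back, $K(a_i)=K(\tilde a_i)$, and $\Kras(a_i-a_{i-1},K)=\Kras(\tilde a_i-a_{i-1},K)=v(\tilde a_i-a_{i-1})=va$. But $va=v(b-a\text{-terms})$... more precisely $va=vb'=v(a_i-a_{i-1})=\min\{v(x-a_i),v(x-a_{i-1})\}=v(x-a_{i-1})$ using $v(x-a_i)>v(x-a_{i-1})$ (which holds by Lemma~\ref{kspcs}, part 2, applied to the key sequence $(a_j)_{j<i}$ extended by $a_i$, or directly since $v(x-a_i)\geq v(x-\tilde a_i)>v(x-a_{i-1})$). This establishes (KS5) for index $i$: $\Kras(a_i,K)=\Kras(a_i-a_{i-1},K)=v(x-a_{i-1})$, once we also note $\Kras(a_i,K)=\Kras(a_i-a_{i-1},K)$ because $a_{i-1}\in K$ so translation by $a_{i-1}$ commutes with the Galois action and preserves the set defining $\Kras$.

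For (KS6) at index $i$: suppose $z\in\tilde K$ with $v(x-z)>v(x-a_{i-1})$. If moreover $v(x-z)>v(x-a_j)$ fails for some $j<i$... the cleanest route: since $v(x-z)>v(x-a_{i-1})=va$, we get $v(z-a_{i-1})=\min\{v(x-z),v(x-a_{i-1})\}=v(x-a_{i-1})=vb$, and $v((z-a_{i-1})-b)=v(z-x)>vb$, so $z-a_{i-1}$ is an element with $v(b-(z-a_{i-1}))>vb$. By the middle statement of Lemma~\ref{exkrap}, $[K(z-a_{i-1}):K]\geq\mathrm e\cdot\mathrm f=[K(a):K]=[K(a_i):K]$. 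Since $a_{i-1}\in K$, $K(z)=K(z-a_{i-1})$, giving $[K(z):K]\geq[K(a_i):K]$, which is (KS6).

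The main obstacle I anticipate is the bookkeeping around invoking Lemma~\ref{exkrap}: that lemma is stated for an element $b$ possessing the residue-data $(\mathrm e,c,\mathrm f)$, and here I must first extract exactly such data from the hypothesis that $\tilde a_i-a_{i-1}=a$ is homogeneous over $K$ — i.e., confirm that a homogeneous element automatically satisfies conditions a) and b) of Lemma~\ref{lstronghom} so that the relevant $\mathrm e,\mathrm f$ exist and $[K(a):K]=\mathrm e\mathrm f$. One must also be slightly careful that $\widetilde K$ (where $z$ lives) and the algebraically closed field $L$ in Lemma~\ref{exkrap} are compatibly valued; since everything sits inside the fixed algebraic closure of $K(x)$ with a fixed extension of $v$, this is fine, but it should be stated. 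The hypotheses (KS5)–(KS6) for $(a_j)_{j<i}$ are, somewhat surprisingly, not needed for index $i$ itself beyond furnishing $v(x-a_{i-1})=v(a_i-a_{i-1})$ — the induction in Proposition~\ref{KS5,6} threads through only via the chain $v(x-a_i)>v(x-a_{i-1})>\cdots$, so I would double-check that claim rather than assume it.
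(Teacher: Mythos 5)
Your proof rests on the assumption that $a_{i-1}\in K$, which you invoke repeatedly (``equivalently $K(a)=K(a_i-a_{i-1})$ since $a_{i-1}\in K$'', ``$\Kras(a_i,K)=\Kras(a_i-a_{i-1},K)$ because $a_{i-1}\in K$'', ``Since $a_{i-1}\in K$, $K(z)=K(z-a_{i-1})$''). But (KS1) only guarantees $a_1\in K$; by (KS4), $K(a_{i-1})$ is a strictly increasing chain of proper extensions, so $a_{i-1}\notin K$ for all $i>2$. This breaks every step that translates by $a_{i-1}$: $K(a_i)=K(\tilde a_i)$ does not follow from $K(a_i-a_{i-1})=K(\tilde a_i-a_{i-1})$, $\Kras(a_i,K)$ and $\Kras(a_i-a_{i-1},K)$ need not coincide by a ``translation commutes with Galois'' argument, and $[K(z):K]$ need not equal $[K(z-a_{i-1}):K]$. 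Your closing remark that (KS5)--(KS6) at level $i-1$ are ``not needed'' is a symptom of the same error: the bound $v(a_{i-1}-\sigma a_{i-1})\le\Kras(a_{i-1},K)=v(x-a_{i-2})<v(x-a_{i-1})$, coming from (KS5) at level $i-1$, is precisely what controls the conjugates of $a_{i-1}$ once $a_{i-1}$ is no longer in $K$, and it is the backbone of the computation of $\Kras(\tilde a_i,K)$.

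There is a second, smaller gap that you yourself flagged: invoking Lemma~\ref{exkrap} for $b=x-a_{i-1}$ requires the $({\rm e},c,{\rm f})$ data from the hypotheses of that lemma, and you propose to deduce it from $a$ being homogeneous via a converse to Lemma~\ref{lstronghom} that the paper does not prove. In the one place the paper does apply Lemma~\ref{exkrap} (Proposition~\ref{existcks}), it extracts this data from tameness of an ambient extension, not from homogeneity of $a$.

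The paper's route is quite different and avoids both problems. For (KS5) it computes $\Kras(\tilde a_i,K)$ directly by writing $\tilde a_i-\sigma\tilde a_i=(a_{i-1}-\sigma a_{i-1})+(a-\sigma a)$, using the inductive bound on $v(a_{i-1}-\sigma a_{i-1})$ and homogeneity of $a$ to get $v(\tilde a_i-\sigma\tilde a_i)\le v(x-a_{i-1})$, then producing a $\sigma$ fixing $a_{i-1}$ but moving $a$ to get equality. To pass from $\tilde a_i$ to $a_i$ it sets $d:=a_i-\tilde a_i$ and uses henselianity (so $vd=v\sigma d$) to see that $v(d-\sigma d)$ is too large to affect $v(\tilde a_i-\sigma\tilde a_i)$; this simultaneously gives $K(a_i)=K(\tilde a_i)$ and $\Kras(a_i,K)=\Kras(\tilde a_i,K)$. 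For (KS6) it simply observes $v(z-a_i)>\Kras(a_i,K)$ and applies the Krasner-type Proposition~\ref{vkras} to conclude $a_i\in K(z)$, hence $[K(z):K]\ge[K(a_i):K]$. If you want to repair your proposal, the key missing ideas are the conjugate decomposition with the inductive bound on $\Kras(a_{i-1},K)$, and the use of Proposition~\ref{vkras} in place of degree bookkeeping via Lemma~\ref{exkrap}.
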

\begin{proof}
We will first prove (KS5) in the special case where $a_i=\tilde{a}_i\,$. 
We start with the case of $i=2$, where $a_{i-1}=a_1\in K$ by (KS1). Hence for every 
$\sigma\in \Gal K$, 
\[
v(\tilde{a}_2-\sigma a_2)\>=\> v(\tilde{a}_2-a_1-\sigma (\tilde{a}_2-a_1))\>.
\]
This implies that $\Kras(\tilde{a}_2,K)=\Kras(\tilde{a}_2-a_1,K)=v(\tilde{a}_2-a_1)$. 

Now we consider the case of $2<i\in S$. By the assumption of our lemma, (KS5) holds with 
$i-1$ in place of $i$. We wish to compute 
$v(\tilde{a}_i-\sigma \tilde{a}_i)$ whenever $\sigma\in \Gal K$ with $\tilde{a}_i\ne\sigma
\tilde{a}_i\,$. We set $a:=\tilde{a}_i-a_{i-1}\,$. Since $\tilde{a}_i=a_{i-1}+a$, we must 
have that $a_{i-1}\ne\sigma a_{i-1}$ or $a\ne\sigma a$, and
\[
v(\tilde{a}_i-\sigma \tilde{a}_i)\>=\> v(a_{i-1}-\sigma a_{i-1} + a-\sigma a)\>.
\]
If $a_{i-1}\ne\sigma a_{i-1}$, then 
\[
v(a_{i-1}-\sigma a_{i-1})\>\leq\> \Kras(a_{i-1},K)\>=\> v(x-a_{i-2})\><\>
v(x-a_{i-1})\>,
\]
where we have used (\ref{>}) for the last inequality. 
If $a\ne\sigma a$, then $v(a-\sigma a)=va=v(x-a_{i-1})$ since $a$ is a homogeneous
approximation of $x-a_{i-1}$ over $K$. In both cases,
\begin{equation}                         \label{bothc}
v(\tilde{a}_i-\sigma \tilde{a}_i)\>=\> \min\{v(a_{i-1}-\sigma a_{i-1}),v(a-\sigma a)\}
\>\leq\> v(x-a_{i-1})\>. 
\end{equation}
On the other hand, as $v(x-\tilde{a}_i)>v(x-a_{i-1})=\max v(x-K(a_{i-1}))$ by our 
assumption on $\tilde{a}_i-a_{i-1}$ and (KS3) with $i-1$ in place of $i$, we have 
that $\tilde{a}_i\notin K(a_{i-1})$. Consequently, $K(a_{i-1},\tilde{a}_i)=
K(a_{i-1},a)$ is a nontrivial extension of $K(a_{i-1})$. It is a separable-algebraic 
extension of $K$ since by part 1) of Lemma~\ref{kspcs}, $a_{i-1}$ is separable over $K$,
and the same holds for $a$ as it is homogeneous over $K$. Hence there is $\sigma\in
\Gal K$ such that $a_{i-1}=\sigma a_{i-1}$ and $a\ne\sigma a$, in which case $v(a_i-\sigma
a_i)=v(x-a_{i-1})$. Consequently, $\Kras(\tilde{a}_i,K)=v(x-a_{i-1})$.

\pars
Now we take $a_i\in K(\tilde{a}_i)$ with $v(x-a_i)\geq v(x-\tilde{a}_i)$.
We set $d:=a_i-\tilde{a}_i$ and observe that $vd\geq v(x-\tilde{a}_i)>v(x-a_{i-1})$. As
$(K,v)$ is henselian, we have that 
$vd=v\sigma d$ and hence $v(d-\sigma d)>v(x-a_{i-1})$ for all $\sigma\in \Gal K$.
Assuming that $\sigma\tilde{a}_i\ne\tilde{a}_i\,$, we compute, using (\ref{bothc}):
\[
v(a_i-\sigma a_i)\>=\>v(\tilde{a}_i-\sigma\tilde{a}_i+d-\sigma d)\>=\>
\min\{v(\tilde{a}_i-\sigma\tilde{a}_i),v(d-\sigma d)\}\>=\>v(\tilde{a}_i
-\sigma\tilde{a}_i)\>.
\]
In particular, $\sigma\tilde{a}_i\ne\tilde{a}_i$ implies that $\sigma a_i\ne a_i$. 
Since $a_i\in K(\tilde{a}_i)$ and $K(\tilde{a}_i)|K(a_i)$ is a separable extension, 
we find that $K(a_i)=K(\tilde{a}_i)$. 
Every $\sigma$ such that $\sigma a_i\ne a_i$ also satisfies
$\sigma\tilde{a}_i\ne\tilde{a}_i\,$, so we obtain that
\[
\kras(a_i,K)\>=\>\kras(\tilde{a}_i,K)\>=\>v(x-a_{i-1})\>.
\]

\pars
In order to show that (KS6) holds, assume that $z\in\tilde K$ satisfies 
$v(x-z)> v(x-a_{i-1})$. Since also $v(x-a_i)> v(x-a_{i-1})$, we have that
\[
v(z-a_i)\>\geq\>\min\{v(x-z),v(x-a_i)\}\> >\> v(x-a_{i-1})\>=\> \Kras(a_i,K)\>, 
\]
where the last equation holds by (KS5). Hence from Proposition~\ref{vkras} it follows that
$a_i\in K(z)$, which proves that (KS6) holds. 
\end{proof}

\begin{proposition}                         \label{SNpure}
Take any valued field $(K,v)$ and any extension $(K(x)|K,v)$. 
Assume that ${\eu S}=(a_i)_{i\in \N}$  is a key sequence for $(K(x)|K,v)$. Then ${\eu S}$ 
is a complete key sequence for $(K(x)|K,v)$, $x$ is transcendental over $K$, 
$(a_i)_{i\in\N}$ is a pseudo Cauchy sequence of transcendental type in 
$(K_{\eu S},v)$ with pseudo limit $x$, and $(K_{\eu S}(x)|K_{\eu S},v)$ is immediate.
\end{proposition}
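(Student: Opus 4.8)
The plan is to prove the four assertions in the order stated, obtaining the transcendence of $x$ first and then recycling that argument for the remaining claims. First I would show that $x$ is transcendental over $K$. By Lemma~\ref{KSinL} we have $K_{\eu S}\subseteq K(x)^h$, so each $a_i$ lies in $K(x)^h$ and hence $K(a_i)^h\subseteq K(x)^h$; since each $a_i$ is separable-algebraic over $K$ (part~1) of Lemma~\ref{kspcs}), these henselizations are finite over $K^h$, and by Lemma~\ref{lemKS4} they form a strictly increasing chain, so the degrees $[K(a_i)^h:K^h]$ are strictly increasing, hence unbounded. If $x$ were algebraic over $K$, then $K(x)^h$ would be finite over $K^h$ and would bound all these degrees, a contradiction. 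I emphasize that this argument used only that $(a_i)_{i\in\N}$ is a key sequence for $(K(x)|K,v)$, so it applies verbatim with $x$ replaced by any element for which $(a_i)_{i\in\N}$ is a key sequence.

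For the pseudo Cauchy sequence: by part~3) of Lemma~\ref{kspcs}, $\eu S=(a_i)_{i\in\N}$ is a pseudo Cauchy sequence in $(K_{\eu S},v)$ with pseudo limit $x$; set $\gamma_i:=v(x-a_i)=v(a_{i+1}-a_i)$, which is strictly increasing. The crucial claim is that $\eu S$ is of transcendental type, which I would prove by contradiction. If $\eu S$ were of algebraic type, then \cite[Theorem~3]{Ka} would furnish a pseudo limit $\alpha$ of $\eu S$ that is algebraic over $K_{\eu S}$ (lying in $K_{\eu S}$, or else in a proper immediate algebraic extension of it); as $K_{\eu S}|K$ is algebraic, $\alpha$ is algebraic over $K$. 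Since all extensions of $v|_{K_{\eu S}}$ to $\widetilde K$ are conjugate over $K_{\eu S}$, after replacing $\alpha$ by a suitable conjugate I may assume that $\alpha\in\widetilde K$ carries the restriction of the fixed extension of $v$ and is still a pseudo limit of $\eu S$. Then $v(\alpha-a_i)=\gamma_i$ for all $i$, so $v(x-\alpha)\ge\min\{v(x-a_{i+1}),v(\alpha-a_{i+1})\}\ge\gamma_{i+1}>\gamma_i=v(x-a_i)$ for every $i$. By part~1) of Lemma~\ref{xx}, $\eu S$ is then a key sequence for $(K(\alpha)|K,v)$, and the argument of the previous paragraph shows that $\alpha$ is transcendental over $K$, contradicting that $\alpha$ is algebraic. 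Hence $\eu S$ is of transcendental type.

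Finally, since $x$ is a pseudo limit of the pseudo Cauchy sequence $\eu S$ of transcendental type in $(K_{\eu S},v)$, Lemma~\ref{limtpcs} applied over $K_{\eu S}$ gives that $(K_{\eu S}(x)|K_{\eu S},v)$ is immediate (and re-confirms that $x$ is transcendental, now over $K_{\eu S}$); and $\eu S$ is a complete key sequence for $(K(x)|K,v)$ directly from the definition, since for $S=\N$ the conditions (CKS1) and (CKS2) are not imposed and in particular the second alternative of (CKS2) does not occur. I expect the only delicate point to be the conjugation step: Kaplansky's Theorem~3 produces $\alpha$ inside an abstract immediate algebraic extension of $K_{\eu S}$, equipped a priori with its own valuation, and some care is needed to transport it into $(\widetilde K,v)$ so that Lemma~\ref{xx} can be invoked; the rest is elementary bookkeeping with values.
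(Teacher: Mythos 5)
Your proof is correct and follows essentially the same route as the paper: it uses Lemma~\ref{KSinL} together with (KS4) to force unbounded degrees $[K(a_i)^h:K^h]$, invokes Kaplansky's Theorem~3 in combination with part~1) of Lemma~\ref{xx} to rule out a pseudo limit that is algebraic over $K$, and finishes with Lemma~\ref{limtpcs}. Your only departures are cosmetic: you extract the degree-bounding argument as a stand-alone first step rather than as a corollary of the transcendental-type claim, and you make explicit the conjugation needed to bring the Kaplansky limit into $(\widetilde K,v)$, a point the paper's proof passes over silently but which is indeed needed for Lemma~\ref{xx} to apply.
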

\begin{proof}
${\eu S}$ is complete as it satisfies (CKS2) because its support is $\N$.

By part 2) of Lemma~\ref{kspcs}, $(a_i)_{i\in\N}$ is a pseudo Cauchy sequence 
in $K_{\eu S}$ with pseudo
limit $x$. Suppose $(a_i)_{i\in\N}$ were of algebraic type. Then by \cite[Theorem~3]{Ka}, 
there would exist some algebraic extension $(K_{\eu S}(y)|K_{\eu S},v)$ with $y$ a pseudo 
limit of the sequence. But then $v(x-y) >v(x-a_i)$ for all $i\in\N$ and by part 1) 
of Lemma~\ref{xx}, $(a_i)_{i\in S}$ is also a key sequence for $(K(y)|K,v)$. Hence by
Lemma~\ref{KSinL}, $K_{\eu S} \subset K(y)^h=K^h(y)$. Since $y$ is algebraic over $K$,
the extension $K^h(y)|K^h$ is finite. On the other hand, $K_{\eu S}^h|K^h$ is infinite 
since (KS4) holds. This contradiction shows that the sequence $(a_i)_{i\in\N}$
is of transcendental type. Hence $x$ is transcendental over $K$ and it
follows from Lemma~\ref{limtpcs} that $(K_{\eu S}(x)| K_{\eu S},v)$ is immediate.
\end{proof}

The following is Theorem~5.9 of \cite{KTrans}, reformulated using the notion of 
``pre-complete key sequence''.
\begin{theorem}                             \label{thIC}
Assume that $(K(x)|K,v)$ is a transcendental extension and $\eu S$ is a pre-complete key 
sequence for $(K(x)|K,v)$. Then
\[
K_{\eu S}^h \>=\> \ic (K(x)|K,v)\>.
\]
Further, $K_{\eu S}v$ is the relative algebraic closure of $Kv$ in
$K(x)v$, and the torsion subgroup of $vK(x)/vK_{\eu S}$ is finite.
\end{theorem}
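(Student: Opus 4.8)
The plan is to reduce everything to Lemma~\ref{3.7KTrans}. Set $E:=K(a_n)$ if $S=\{1,\dots,n\}$ is finite, and $E:=K_{\eu S}$ if $S=\N$. First I would show that the extension $(E(x)|E,v)$ is weakly pure in $x$; then Lemma~\ref{3.7KTrans}, applied with $E$ in place of $L$ inside the fixed algebraic closure $\widetilde{K(x)}=\widetilde{E(x)}$ equipped with its fixed extension of $v$, gives $E^h=\ic(E(x)|E,v)$. Finally I would pass from $E$ down to $K$, using that $E|K$ is algebraic, and in the finite case from $E^h=K(a_n)^h$ to $K_{\eu S}^h$ via Lemma~\ref{KSinL}.

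For the weak purity of $(E(x)|E,v)$ I would invoke the two alternatives permitted by the definition of a pre-complete key sequence. If $S=\N$, then Proposition~\ref{SNpure} tells us that $x$ is transcendental over $K$ and that $(a_i)_{i\in\N}$ is a pseudo Cauchy sequence of transcendental type in $(K_{\eu S},v)=(E,v)$ with pseudo limit $x$; this is exactly the second alternative in the definition of ``weakly pure in $x$''. If $S=\{1,\dots,n\}$ is finite, then, since $(K(x)|K,v)$ is transcendental, condition (CKS2) applies: either $v(x-a_n)=\max v(x-\widetilde{K})$, which, because $\widetilde{K(a_n)}=\widetilde{K}$, is the first alternative of weak purity over $E=K(a_n)$ with $a=a_n$, or $x$ is the limit of a pseudo Cauchy sequence of transcendental type in $(K(a_n),v)=(E,v)$, which is the second alternative. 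Thus $(E(x)|E,v)$ is weakly pure in $x$ in all cases, and $E^h=\ic(E(x)|E,v)$ by Lemma~\ref{3.7KTrans}.

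To pass down to $K$: by Lemma~\ref{KSinL} we have $E\subseteq K_{\eu S}\subseteq K(x)^h$, hence $E^h\subseteq K(x)^h$, and since $E\subseteq\widetilde{K}$ also $E^h\subseteq\widetilde{K}$; thus $E^h\subseteq K(x)^h\cap\widetilde{K}=\ic(K(x)|K,v)$. Conversely, $K(x)^h\subseteq E(x)^h$ and $\widetilde{K}=\widetilde{E}$ give $\ic(K(x)|K,v)=K(x)^h\cap\widetilde{K}\subseteq E(x)^h\cap\widetilde{E}=\ic(E(x)|E,v)=E^h$. Therefore $E^h=\ic(K(x)|K,v)$; moreover $E^h=K_{\eu S}^h$, trivially when $S=\N$ and by Lemma~\ref{KSinL} (which gives $E^h=K(a_n)^h=K_{\eu S}^h$) when $S$ is finite. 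This proves $K_{\eu S}^h=\ic(K(x)|K,v)$.

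For the two supplementary assertions, put $L:=\ic(K(x)|K,v)=E^h$. Since henselizations are immediate we have $Lv=Ev$ and $vL=vE$; moreover $Ev=K_{\eu S}v$ and $vE=vK_{\eu S}$ (in the finite case because $K(a_n)\subseteq K_{\eu S}\subseteq K(a_n)^h$), while $E\subseteq K(x)^h$ and $K(x)\subseteq E(x)$ sandwich $K(x)v$ between $Ev$ and $E(x)v$ and $vK(x)$ between $vE$ and $vE(x)$. It then suffices to verify that $Ev$ is relatively algebraically closed in $E(x)v$ and that the torsion subgroup of $vE(x)/vE$ is finite: since $K_{\eu S}v=Ev$ is algebraic over $Kv$, the first yields that $K_{\eu S}v$ is the relative algebraic closure of $Kv$ in $K(x)v$, and the second yields that the torsion subgroup of $vK(x)/vK_{\eu S}$ is finite (being a subgroup of that of $vE(x)/vE$). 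These two properties of $(E(x)|E,v)$ follow by unwinding its weak purity, via Lemma~\ref{WP} and Lemma~\ref{limtpcs}, into its value-transcendental, residue-transcendental and immediate subcases, in each of which they are standard; alternatively, both are part of \cite[Theorem~5.9]{KTrans}, of which this theorem is the announced reformulation. The only step requiring genuine care is the case distinction in the second paragraph — matching the two alternatives defining a pre-complete key sequence with the two defining ``weakly pure'', together with the remark $\widetilde{K(a_n)}=\widetilde{K}$ that upgrades maximality of $v(x-a_n)$ over $K$ to maximality over $K(a_n)$; everything after weak purity is established is routine bookkeeping with henselizations, residue fields and value groups.
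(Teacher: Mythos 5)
Your proposal is correct and, in substance, takes the route the paper implicitly intends: the paper offers no argument of its own for Theorem~\ref{thIC}, merely declaring it to be \cite[Theorem~5.9]{KTrans} reformulated, and your derivation --- establishing weak purity of $(K_{\eu S}(x)|K_{\eu S},v)$ in both the $S=\N$ case (via Proposition~\ref{SNpure}) and the finite case (via (CKS2), with the observation $\widetilde{K(a_n)}=\widetilde{K}$), then applying Lemma~\ref{3.7KTrans} and descending to $K$ through $K(x)^h\cap\widetilde K$ --- reconstructs exactly the mechanism on which that cited theorem rests. You are also right to flag that the residue-field and value-group statements still ultimately rest on \cite{KTrans} or on the standard value-transcendental/residue-transcendental/immediate analysis of weakly pure extensions; since the paper itself simply defers these to \cite[Theorem~5.9]{KTrans}, that deferral is appropriate here.
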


\mn
%
%
%
\subsection{Key sequences and key polynomials}     \label{sectseqpol}
\mbox{ }\sn
\begin{proposition}                        \label{kskp}
Take any valued field $(K,v)$, an extension $(K(x)|K,v)$, and a key sequence ${\eu S}
=(a_i)_{i\in S}$ for $(K(x)|K,v)$ with additional properties (KS5) and (KS6). 
Let $Q_i\in K[X]$ be the minimal polynomial of $a_i$ over $K$. Then $Q_i$ is a
key polynomial and $a_i$ is the unique maximal root of $Q_i\,$. 

Assume in addition that ${\eu S}$ is complete.
Then $(Q_i)_{i\in S}$ is a complete sequence of key polynomials for $(K(x)|K,v)$. 
\end{proposition}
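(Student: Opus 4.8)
The plan is to prove the two parts of Proposition~\ref{kskp} in turn: first that each $Q_i$ is a key polynomial with $a_i$ as its unique maximal root, and then that completeness of $\eu S$ yields a complete sequence of key polynomials.

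For the first part I would argue as follows. Fix $1<i\in S$ (the case $i=1$ is trivial since $\deg Q_1=1$ and $a_1\in K$). By (KS5) we know $\Kras(a_i,K)=v(x-a_{i-1})$, and by Lemma~\ref{kspcs} together with (KS3) we have $v(x-a_i)>v(x-a_{i-1})=\max v(x-K(a_{i-1}))$. The key observation is that $\delta(Q_i)=v(x-a_i)$: indeed, for any other root $\sigma a_i$ of $Q_i$ with $\sigma\in\Gal K$, $\sigma a_i\ne a_i$, we have $v(a_i-\sigma a_i)\le\Kras(a_i,K)=v(x-a_{i-1})<v(x-a_i)$, so $v(x-\sigma a_i)=\min\{v(x-a_i),v(a_i-\sigma a_i)\}=v(a_i-\sigma a_i)<v(x-a_i)$; hence $a_i$ is the \emph{unique} maximal root and $\delta(Q_i)=v(x-a_i)$. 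To see that $Q_i$ is a key polynomial, suppose $f\in K[X]$ has $\deg f<\deg Q_i=[K(a_i):K]$. Let $z$ be a maximal root of $f$, so $\delta(f)=v(x-z)$ and $[K(z):K]\le\deg f<[K(a_i):K]$. If $v(x-z)>v(x-a_{i-1})$, then (KS6) gives $[K(z):K]\ge[K(a_i):K]$, a contradiction; hence $v(x-z)\le v(x-a_{i-1})<v(x-a_i)$, i.e.\ $\delta(f)<\delta(Q_i)$, as required.

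For the second part, assume $\eu S$ is complete and let $f\in K[X]$ be arbitrary; I must produce $i\in S$ with $\deg Q_i\le\deg f$ and $\delta(f)\le\delta(Q_i)$. Let $z$ be a maximal root of $f$, so $\delta(f)=v(x-z)$. The dichotomy is driven by where $v(x-z)$ sits relative to the increasing sequence $\bigl(v(x-a_i)\bigr)_{i\in S}$. If there exists $i\in S$ with $v(x-z)\le v(x-a_i)=\delta(Q_i)$, take the least such $i$; if $i=1$ then $\deg Q_1=1\le\deg f$ trivially, and if $i>1$ then $v(x-z)>v(x-a_{i-1})$, so (KS6) gives $[K(z):K]\ge[K(a_i):K]=\deg Q_i$, whence $\deg Q_i\le[K(z):K]\le\deg f$ and we are done. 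The remaining case is $v(x-z)>v(x-a_i)$ for \emph{all} $i\in S$; I claim this cannot happen. If $S=\N$ this contradicts Proposition~\ref{SNpure}, since $(a_i)_{i\in\N}$ is a pseudo Cauchy sequence of transcendental type with pseudo limit $x$, so no single $z$ algebraic over $K$ can satisfy $v(x-z)>v(x-a_i)$ for all $i$ (by Lemma~\ref{xx}(1) such a $z$ would make $(a_i)$ a key sequence for $(K(z)|K,v)$, forcing $K_{\eu S}\subseteq K(z)^h$, contradicting (KS4) as in the proof of Proposition~\ref{SNpure}). If $S=\{1,\dots,n\}$ is finite, then completeness forces, via (CKS1)--(CKS2), that either $a_n=x$ (so $\delta(f)\le v(x-a_n)=\infty$ trivially), or $v(x-a_n)=\max v(x-\widetilde K)\ge v(x-z)$, again a contradiction. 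Hence the first case always applies and $(Q_i)_{i\in S}$ is complete.

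The main obstacle, and the step that requires the most care, is the disposal of the ``runaway'' case $v(x-z)>v(x-a_i)$ for all $i\in S$ when $S=\N$: one has to correctly invoke Lemma~\ref{xx}(1) to upgrade $\eu S$ to a key sequence for $(K(z)|K,v)$, then Lemma~\ref{KSinL} to get $K_{\eu S}\subseteq K(z)^h$, and finally (KS4) to derive the contradiction that an infinite tower sits inside the finite extension $K^h(z)|K^h$ — this is exactly the argument already used in the proof of Proposition~\ref{SNpure}, so it should be cited rather than repeated. Everything else is a bookkeeping exercise with the valuations of $x-a_i$ and the Krasner bounds supplied by (KS5) and (KS6).
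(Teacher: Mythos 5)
Your proof is correct. The first half (unique maximal root, and $Q_i$ being a key polynomial) is essentially identical to the paper's argument, using (KS5) to bound $v(a_i-\sigma a_i)$ and (KS6) to force a degree blow-up for any competitor $f$.

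For the completeness half you take a dual route. The paper fixes $z$ with $v(x-z)=\delta(f)$ and then chooses the \emph{maximal} $i\in S$ with $\deg Q_i\leq\deg f$ (which exists because the degrees $\deg Q_i$ are strictly increasing); assuming $v(x-z)>v(x-a_i)$ immediately produces a contradiction via (KS6) applied with $i+1$, because by the preliminary remark about $v(x-a_n)$ (or vacuously for $S=\N$, where $i$ is never last) the index $i$ always has a successor in $S$. You instead choose the \emph{least} $i$ with $\delta(f)\leq\delta(Q_i)$ and deduce $\deg Q_i\leq\deg f$ from (KS6). This is fine, but it leaves you with a ``runaway'' case — $\delta(f)>\delta(Q_i)$ for all $i$ — to rule out separately, and for $S=\N$ you need to reach for the heavier machinery of Proposition~\ref{SNpure}, Lemma~\ref{xx}, Lemma~\ref{KSinL} and (KS4). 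That argument is sound (and indeed exactly the one buried in the proof of Proposition~\ref{SNpure}), but the paper's degree-based choice of $i$ sidesteps it entirely: when $S=\N$ the degrees are unbounded, so the maximal $i$ with $\deg Q_i\leq\deg f$ is never final, and one single application of (KS6) settles everything. So both proofs work; yours is a valid alternative but slightly less economical in the $S=\N$ branch.
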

\begin{proof}
By (KS5) and (\ref{>}) we know that whenever $a'_i\ne a_i$ is a conjugate of 
$a_i$ over $K$, then $v(a_i-a'_i)\leq v(x-a_{i-1})<v(x-a_i)$;
this implies that $v(x-a'_i)=\min\{v(x-a_i),v(a_i-a'_i)\}<v(x-a_i)$.
Hence $a_i$ is the unique maximal root of $Q_i\,$. In particular, $\delta(Q_i)=
v(x-a_i)$.

Assume that $f\in K[X]$ such that $\delta(f)\geq \delta(Q_i)=v(x-{a}_i)$. Take a root $z$
of $f$ such that $v(x-z)=\delta(f)\geq v(x-a_i)>v(x-a_{i-1})$. Then by (KS6), 
$\deg f =[K(z):K]\geq [K(a_i):K]=\deg Q_i\,$. This proves that $Q_i$ is a key 
polynomial.


\pars
We turn to the last assertion of our proposition. We note that if $S=\{1,\ldots,n\}$ 
and $x$ is transcendental over $K$, then by our assumption, $v(x-a_n)=\max 
v(x-\widetilde{K})$; this also holds if $x$ is algebraic over $K$, since then by the 
definition of completeness, $x=a_n$ and thus $v(x-a_n)=\infty$.

We wish to show that for every $f(X) \in K[X]$ 
there exists $i\in S$  such that $\deg Q_i \leq \deg f$ and $\delta(f) \leq \delta(Q_i)$. 
Let $z$ be a root of $f$ such that $v(x-z)=\delta(f)$. Take $i\in S$ maximal with 
$\deg Q_i \leq \deg f$. Suppose that $v(x-z)>v(x-a_i)$. Then by what we have said 
above, $i$ cannot be the last element of $S$. By (KS6), $\deg f\geq [K(z):K]\geq 
[K(a_{i+1}):K]=\deg Q_{i+1}$, which contradicts our choice of $i$. This shows that 
$\delta(f)=v(x-z)\leq v(x-a_i) =\delta(Q_i)$.
\end{proof}

\mn
%
%
%
\subsection{Key sequences and tameness}     \label{secteqtame}
%
\mbox{ }\sn
We wish to characterize the extensions $(K(x)|K,v)$ for which there exist
key sequences.

If an element $a\in \widetilde{K}$ satisfies the conditions of
Lemma~\ref{lstronghom}, then $(K(a)|K,v)$ is a tame extension. 
More generally, the following holds.
\begin{proposition}                         \label{hit}
Take any valued field $(K,v)$. If $a$ is homogeneous over $K$,
then $(K(a)|K,v)$ is a tame extension. If $(K,v)$ is henselian and $\eu S$ is a key
sequence for $(K(x)|K,v)$, then $K_{\eu S}$ is a tame extension of $K$.
\end{proposition}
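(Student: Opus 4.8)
The plan is to prove the two assertions in turn, with the second following from the first by induction along the key sequence. For the first assertion, suppose $a$ is homogeneous over $K$; we may assume $a\notin K$, since otherwise $K(a)=K$ is trivially tame over $K$. First I would reduce to the henselian case: the extension $(K(a)|K,v)$ is unibranched by the definition of homogeneity, so by part~2) of Lemma~\ref{tameprop} it is tame if and only if $(K(a)^h|K^h,v)=(K^h(a)|K^h,v)$ is tame, and by part~2) of Lemma~\ref{homup} the element $a$ is still homogeneous over $K^h$. So assume $(K,v)$ henselian, fix an extension of $v$ to $\widetilde K$, and let $K^r$ be the absolute ramification field of $(K,v)$, which by \cite[part (b) of Lemma 2.13]{[K7]} is the unique maximal tame extension of $(K,v)$. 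It then suffices to show $a\in K^r$: for then $(K(a)|K,v)$ is a subextension of the tame extension $(K^r|K,v)$ and hence tame by part~4) of Lemma~\ref{tameprop}.

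To show $a\in K^r$ I would argue by contradiction. Since $a$ is separable-algebraic over $K$ while $K^r|K$ is separable, if $a\notin K^r$ there is $\sigma\in\Gal K^r\subseteq\Gal K$ with $\sigma a\ne a$. The group $\Gal K^r$ is the ramification group of $(K,v)$, and a standard fact of ramification theory (see \cite{En,[K7]}) is that every $\sigma$ in it satisfies $v(\sigma x-x)>vx$ whenever $\sigma x\ne x$; applied to $x=a$ this gives $v(\sigma a-a)>va$. On the other hand, since $\sigma a\ne a$, the very definition of $\Kras$ together with the homogeneity of $a$ yields $v(\sigma a-a)\le\Kras(a,K)=va$, a contradiction. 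This step is the heart of the matter: it is precisely where ``$va=\Kras(a,K)$'' is used to place $a$ inside the tame part of $\widetilde K$, and the main obstacle I anticipate is isolating and citing the correct ramification‑theoretic statement about $\Gal K^r$; everything else is formal.

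For the second assertion, let $(K,v)$ be henselian and $\eu S=(a_i)_{i\in S}$ a key sequence for $(K(x)|K,v)$, and put $F_i:=K(a_1,\dots,a_i)$, so that $F_1=K$ by (KS1), $F_i=F_{i-1}(a_i)$, and $K_{\eu S}=\bigcup_{i\in S}F_i$ (equal to $F_n$ when $S=\{1,\dots,n\}$). I would prove by induction on $i\in S$ that $(F_i|K,v)$ is tame, the case $i=1$ being trivial. For $1<i\in S$, (KS2) supplies $\tilde a_i$ with $K(\tilde a_i)=K(a_i)$ such that $\tilde a_i-a_{i-1}$ is a homogeneous approximation of $x-a_{i-1}$ over $K$; in particular $\tilde a_i-a_{i-1}$ is homogeneous over $K$, and hence over $F_{i-1}$ by part~2) of Lemma~\ref{homup}. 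Since $a_{i-1}\in F_{i-1}$ and $K(\tilde a_i)=K(a_i)\subseteq F_i$, we get $F_i=F_{i-1}(a_i)=F_{i-1}(\tilde a_i)=F_{i-1}(\tilde a_i-a_{i-1})$, so by the first assertion applied over the base field $F_{i-1}$ the extension $(F_i|F_{i-1},v)$ is tame; together with the inductive hypothesis that $(F_{i-1}|K,v)$ is tame, part~4) of Lemma~\ref{tameprop} gives that $(F_i|K,v)$ is tame. Finally, $K_{\eu S}$ is the compositum of the tame extensions $(F_i|K,v)$, $i\in S$, hence a tame extension of $K$ by part~3) of Lemma~\ref{tameprop} (for finite $S$ this is already the last step of the induction).
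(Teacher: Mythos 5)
Your proof is correct and follows essentially the same route as the paper's: for the first assertion, the ramification field $K^r$ is identified as the maximal tame extension and the ramification-group property $v(\sigma a-a)>va$ is pitted against $\Kras(a,K)=va$; for the second, one induces along the sequence using Lemma~\ref{homup}~2), the first assertion, and the closure properties in Lemma~\ref{tameprop}. The only cosmetic difference is that in the induction step you pass homogeneity of $\tilde a_i-a_{i-1}$ up to the base $F_{i-1}$ and invoke transitivity (part~4) of Lemma~\ref{tameprop}), whereas the paper keeps the base at $K$, forms the compositum $K(a_{i-1},\tilde a_i-a_{i-1})$ using part~3), and then takes the subfield $K(a_i)$ via part~4); both are valid and of comparable length.
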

\begin{proof}
If $(K(a)|K,v)$ is not a tame extension, then by part 2) of Lemma~\ref{tameprop}, also 
$(K(a)^h|K^h,v)$ is not a tame extension. Then $a$ does not lie in the ramification field 
$K^r$ of the extension $(K\sep|K^h,v)$ since by \cite[part (b) of Lemma 2.13]{[K7]}, 
$K^r$ is the unique maximal tame extension of $(K^h,v)$. So there exists an automorphism 
$\sigma$ in the ramification group such that $\sigma a\ne a$. But by the
definition of the ramification group,
\[
\Kras (a,K) \>\geq\> v(\sigma a- a)\> >\> va\>,
\]
showing that $a$ is not homogeneous over $K$.

\pars
Now assume that $(K,v)$ is henselian and $\eu S=(a_i)_{i\in S}$ is a key sequence for 
$(K(x)|K,v)$. By induction on $i\in S$, we prove that $(K(a_i)|K,v)$ is a tame extension. 
This holds for $i=1$ since $a_1\in K$. Now assume that we have already shown that 
$(K(a_{i-1}),v)=(K(\tilde{a}_{i-1}),v)$ is a tame extension of $(K,v)$. Since 
$\tilde{a}_i-a_{i-1}$ is homogeneous over $K$, the first part of our proposition shows 
that $(K(\tilde{a}_i-a_{i-1})|K,v)$ is a tame extension. Hence by parts 3) and 4) of
Lemma~\ref{tameprop}, $(K(a_{i-1},\tilde{a}_i-a_{i-1}),v)$ and its subfield $(K(a_i),v)
=(K(\tilde{a}_i),v)$ are tame extensions of 
$(K,v)$. As $K_{\eu S}=K(a_i\mid i\in S)$, we can again employ part 3) of
Lemma~\ref{tameprop} to conclude that $(K_{\eu S}|K,v)$ is a tame extension. 
\end{proof}

We now prove the existence of key sequences under suitable tameness assumptions:
\begin{proposition}                                   \label{existcks}
Take a henselian field $(K,v)$ and an extension $(K(x)|K,v)$. 
If $x\in\widetilde{K}$, then assume that the extension $(K(x)|K,v)$ is tame. If $x$ is
transcendental over $K$, then assume that there is a tame extension $(L'|K,v)$ such that
for a suitable extension of $v$ from $K(x)$ to $L'(x)$, the extension $(L'(x)|L',v)$ is
weakly pure in $x$. Then there exists a pre-complete key sequence ${\eu S}=
(a_i)_{i\in S}$ for $(K(x)|K,v)$. It is complete if $x\in\widetilde{K}$ or 
$(L'(x)|L',v)$ is valuation transcendental.
\end{proposition}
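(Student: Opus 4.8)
The plan is to construct the sequence $(a_i)$ recursively, at each stage producing a homogeneous approximation $\tilde a_i - a_{i-1}$ of $x - a_{i-1}$ over $K$ and then ``correcting'' $\tilde a_i$ to an $a_i$ with $v(x-a_i) = \max v(x - K(a_i))$, stopping when no further progress is possible. First I would set $a_1 \in K$ with $v(x - a_1) = \max v(x - K)$; such an $a_1$ exists because $(K,v)$ is henselian (hence $K = K(a_1)$ already has a maximal element of $v(x-K)$ by Corollary~\ref{maxapprcor} in the algebraic case, and more generally one argues directly, or if $v(x-K)$ has no maximum we are already in the situation where $S = \{1\}$ is the last step). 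The recursion step is the heart of the argument: given $a_{i-1}$ with $b := x - a_{i-1}$ and $v(b) = v(x - a_{i-1}) = \max v(x - K(a_{i-1}))$ not maximal in $v(x - K(a_{i-1})^{\mathrm{sep}})$ or in the relevant closure, I would use the weak purity of $(L'(x)|L',v)$ together with Lemma~\ref{WP2} to see that $v(b - K)$ genuinely grows — more precisely, that there is an element whose distance to $x$ strictly exceeds $v(x - a_{i-1})$ — and then invoke Lemma~\ref{exkrap} to manufacture a homogeneous approximation $a$ of $b$ over $K$ of finite degree $\mathrm{e}\cdot\mathrm{f}$. Setting $\tilde a_i := a_{i-1} + a$ gives (KS2) for $\tilde a_i$; then I replace $\tilde a_i$ by $a_i \in K(\tilde a_i)$ achieving $v(x-a_i) = \max v(x-K(a_i))$ (using henselianity of $K$ and Corollary~\ref{maxapprcor}, applied to the tame extension $K(\tilde a_i)|K$ guaranteed by Proposition~\ref{hit}), which secures (KS3). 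Properties (KS5), (KS6) then come for free along the way by Proposition~\ref{KS5,6}/Lemma~\ref{KS4,5lem}.

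The termination and completeness analysis is where the case distinction in the statement enters. If $x \in \widetilde K$, the degrees $[K(a_i):K]$ are strictly increasing by (KS4) (Lemma~\ref{lemKS4}) and bounded by $[K(x):K] < \infty$, so the recursion halts; moreover it can only halt when $v(x - a_n)$ is maximal in $v(x - \widetilde K)$, and since $K(a_n)|K$ is tame hence defectless, one deduces $x \in K(a_n)^h = K(a_n)$ (here henselianity of $K$ and Lemma~\ref{KSinL} give $K_{\eu S} = K(a_n)$), forcing $a_n = x$ by the maximality together with the fact that an immediate extension would contradict Lemma~\ref{maxappr}(1); this gives (CKS1) and completeness. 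If $x$ is transcendental, either the recursion runs forever ($S = \N$), in which case Proposition~\ref{SNpure} shows $\eu S$ is already complete; or it halts at some $n$, and then I must show (CKS2): at the halting stage $v(x - K(a_n))$ has a maximum $v(x - a_n)$ but no homogeneous approximation over $K$ improves it. Using Lemma~\ref{WP2}(2), the maximum $v(x - a_n)$ of $v(x - K(a_n))$ (noting $a_n \in L'$ up to the embedding, via $\ic(K(x)|K,v) \subseteq L'$) transfers to a maximum of $v(x - \widetilde K)$ when $(L'(x)|L',v)$ is valuation transcendental — giving completeness — whereas in the valuation-algebraic case $x$ is instead a pseudo limit of a pseudo Cauchy sequence of transcendental type in $(K(a_n),v)$, which is the second alternative of (CKS2); this is where weak purity of $(L'(x)|L',v)$ and part~1) of Lemma~\ref{WP2} (applied with $K' = K(a_n)$) do the work.

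The main obstacle I anticipate is the recursion step in the transcendental case: showing that as long as $v(x - K(a_{i-1}))$ has a maximum which is \emph{not} maximal in $v(x - \widetilde K)$ and $x$ is not yet a transcendental-type pseudo limit over $K(a_{i-1})$, there genuinely exists a homogeneous approximation of $x - a_{i-1}$ over $K$ (not merely over $K(a_{i-1})$) that strictly improves the value. The subtlety is that the approximation must be taken over the \emph{base} field $K$, so I need to produce the residue-theoretic data (an $\mathrm{e}$ and a $c \in K$ with $vc(x-a_{i-1})^{\mathrm e} = 0$ and $c(x-a_{i-1})^{\mathrm e}v$ separable-algebraic over $Kv$) from the weak purity hypothesis over $L'$, then descend; this is exactly the point where $(L'|K,v)$ being tame is used, to control ramification and residue separability, and where Lemma~\ref{exkrap} is applied — possibly after first passing to $L'$, building the approximation there, and then using Lemma~\ref{exkrap}'s uniqueness clause together with $\Kras(\cdot,L') \le \Kras(\cdot,K)$ (Lemma~\ref{LemKrasKL}) to check the descended element is still homogeneous over $K$. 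The remaining steps are bookkeeping with the lemmas already established.
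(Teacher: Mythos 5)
Your plan follows the same route as the paper's proof: recursive construction via homogeneous approximations from Lemma~\ref{exkrap}, tameness of $(L'|K,v)$ descending to finite subextensions to supply the residue/ramification data for the approximation over $K$, verification that each partial sequence satisfies (KS1)--(KS3), and termination analysis via (KS4) and weak purity. The identification of the ``descent to $K$'' step as the crux, and the use of the tame subextension $K(z,a_{i-1})|K$ of $L'|K$ to supply the hypotheses of Lemma~\ref{exkrap}, is exactly what the paper does; the alternative you propose (build over $L'$ first, then descend via Lemma~\ref{LemKrasKL}) is not needed.

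One point to tighten. You write that after producing $\tilde a_i$ you replace it by $a_i$ achieving $v(x-a_i)=\max v(x-K(a_i))$ ``using \dots\ Corollary~\ref{maxapprcor}, applied to the tame extension $K(\tilde a_i)|K$.'' That corollary, applied to $K(\tilde a_i)|K$, gives a maximum for $v(\tilde a_i - K)$, which is unrelated to the set $v(x-K(\tilde a_i))$ you need. In the algebraic case the correct application is to the tame extension $(K(\tilde a_i)(x)|K(\tilde a_i),v)$ (with $\tilde a_i\in K(x)$ by the henselization argument of Lemma~\ref{KSinL}). In the transcendental case $v(x-K(\tilde a_i))$ need not have a maximum at all; the paper handles this by the case distinction: if there is no maximum, simply set $a_i:=\tilde a_i$, so that (KS3) holds in its ``no largest element'' form and $i$ becomes the last index, after which (CKS2) (first alternative: pseudo Cauchy sequence of transcendental type via part~1 of Lemma~\ref{WP2}) is verified at the top of the next round. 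Your termination paragraph already covers the pseudo-Cauchy alternative, but your statement ``at the halting stage $v(x-K(a_n))$ has a maximum'' is not always true and should be dropped; halting can occur precisely because no maximum exists. With that correction and the proper application of Corollary~\ref{maxapprcor}, the argument is the paper's argument.
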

\begin{proof}
If $x\in\widetilde{K}$, then
we set $L'=K(x)$; hence in all cases we have that $(L'|K,v)$ is a tame extension.

If $v(x-K)$ has no maximum, then we set $S=\{1\}$ and $a_1=0$; otherwise, 
we choose $a_1\in K$ such that $v(x-a_1)=\max v(x-K)$. Then (KS1) is 
satisfied and (KS3) holds for $i=1$; (KS2) is void 
for $i=1$. Note that if $x\in\widetilde{K}$, then a maximum always
exists by part 1) of Corollary~\ref{maxapprcor}.

\pars
Now we assume that $i>1$ and a key sequence ${\eu S}_{i-1}
=(a_j)_{1\leq j\leq i-1}$ for the extension $(K(x)|K,v)$ has already been
constructed such that $K_{{\eu S}_{i-1}}\subseteq L'$. By Proposition~\ref{KS5,6}, it
has properties (KS5) and (KS6).

If $x\in K(a_{i-1})$, then from (KS3) it 
follows that $x=a_{i-1}$, i.e., (CKS1) holds for $n=i-1$. In this case, or if 
(CKS2) holds for $n=i-1$, our construction stops here. Otherwise, we proceed as follows.

First we show that $v(x-K(a_{i-1}))$ has a largest element, which is not the 
largest element of $v(x-L')$. Assume that $x$ is transcendental over $K$. If 
$v(x-K(a_{i-1}))$ would not have a largest element, then $x$ would be the limit 
of a pseudo Cauchy sequence in $(K(a_{i-1}),v)$ of transcendental type by part
1) of Lemma~\ref{WP2} and consequently, (CKS2) would hold for $n=i-1$. Hence 
$v(x-K(a_{i-1}))$ has a largest element $v(x-a_{i-1})$. If this would also be the
largest element of $v(x-L')$, then by part 2) of Lemma~\ref{WP2} it would also be the
largest element of $v(x-\widetilde{K})$ and again, (CKS2) would hold for $n=i-1$. 
Now assume that $x$ is algebraic over $K$. As $(L'|K(a_{i-1}),v)$ is a tame extension by 
part 4) of Lemma~\ref{tameprop}, we know from part 1) of Corollary~\ref{maxapprcor} that 
$v(x-K(a_{i-1}))$ has a largest element. However, as $x\in L'\setminus 
K(a_{i-1})$, this is not the largest element $v(x-x)=\infty$ of $v(x-L')$.

\pars
We are going to prove that there is a homogeneous approximation for $x-a_{i-1}$ 
over $K$. By what we have shown above, $v(x-a_{i-1})=\max v(x-K(a_{i-1}))$ is 
not the  largest element of $v(x-L')$, so there is some $z\in L'\setminus
K(a_{i-1})$  such that $v(x-z)>v(x-a_{i-1})$; in the algebraic case we can choose 
$z=x$. We have that $v(z-a_{i-1})=v(x-a_{i-1})$. In all cases, $(K(z,a_{i-1})|K,v)$ is a 
subextension of $(L'|K,v)$, so by part 4) of Lemma~\ref{tameprop}, it is a tame extension. 

We set $b:=z-a_{i-1}$. If e is the smallest natural number such that $\rme vb\in 
vK$, then e is not divisible by $\chara Kv$. Further, if $c\in K$ is such 
that $vcb^{\rm e}=0$, then $cb^{\rm e}v$ is separable-algebraic over $Kv$. Hence the
assumptions of the first part of Lemma~\ref{exkrap} are satisfied and we obtain the 
existence of a homogeneous approximation $a\in \tilde K$ of $b$ over $K$.
Also in the transcendental case, $a$ is a homogeneous approximation of 
$x-a_{i-1}$ over $K$ since 
\[
v(x-a_{i-1}-a)\>\geq\>\min\{v(z-a_{i-1}-a),v(x-z)\}\> >\> 
v(z-a_{i-1})\>=\>v(x-a_{i-1})\>.
\]
We set $\tilde{a}_i:=a_{i-1}+a$. 

Assume that $v(x-K(\tilde{a}_i))$ has a maximum. Then we pick $a_i\in K(\tilde{a}_i)$ such
that $v(x-a_i)=\max v(x-K(\tilde{a}_i))$. From Lemma~\ref{KS4,5lem} we infer that $K(a_i)=
K(\tilde{a}_i)$. 
If $v(x-K(a_i))$ has no maximum, then we set $a_i:=\tilde{a}_i\,$. In both cases, 
(KS2) and (KS3) hold. This shows that ${\eu S}_i:=(a_j)_{1\leq j\leq i}$
a key sequence for $(K(x)|K,v)$. By Lemma~\ref{KSinL} and part 3) of Lemma~\ref{WP2},
using also that $(L',v)$ is henselian since $(K,v)$ is,
\[
K_{{\eu S}_i}\>\subseteq\>K(x)^h\cap\widetilde{K}\>=\>\ic(K(x)|K,v)\>\subseteq\>
\ic(L'(x)|L',v)\>=\>L'\>.
\]
This completes the induction step. 

If the construction stops at some $n\in\N$, then we set ${\eu S}:={\eu S}_n\,$. If the 
construction does not stop (which can only happen in the transcendental case), then we set 
${\eu S}:=\bigcup_{i\in\N} {\eu S}_i\,$; it is straightforward to prove that this is a 
key sequence for $(K(x)|K,v)$. As we have shown above, if the construction stops at some 
$n\in\N$, then (CKS1) or (CKS2) hold, that is, the key sequence ${\eu S}={\eu S}_n$ is
pre-complete, and it is complete if $x\in\widetilde{K}$, in which case (CKS1) must hold. 
If the sequence is not complete, then that means that $x$ is limit of a pseudo Cauchy 
sequence of transcendental type in $(K(a_n),v)$. But this is also a pseudo Cauchy 
sequence of transcendental type in $(L',v)$, as shown in the proof of part 4) of 
Lemma~\ref{WP2}. If $(L'(x)|L',v)$ is valuation transcendental, then such a pseudo Cauchy
sequence cannot exist, showing that ${\eu S}$ must be complete.

Finally, if the construction does not stop, then the index set $S$ of ${\eu S}$ is 
$\N$ and ${\eu S}$ is complete by definition.
\end{proof}

\begin{remark}
It is an open problem whether it can be shown that also $a_i-a_{i-1}$ is a homogeneous 
approximation of $x-a_{i-1}\,$. Likewise, one may be tempted to believe that also 
$\tilde{a}_i$ is homogeneous over $K$, but this appears to be not the case in general.
\end{remark}

\pars
We can give the following characterization of elements in tame extensions:
\begin{corollary}
An element $y\in \widetilde{K}$ belongs to a tame extension of a henselian
field $(K,v)$ if and only if there is a finite key sequence
$(a_i)_{1\leq i\leq k}$ for $(K(y)|K,v)$ such that $y=a_k\,$.
\end{corollary}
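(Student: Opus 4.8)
The corollary is a clean repackaging of two of the preceding results, so the plan is to prove each direction by citing the appropriate machinery. For the "if" direction, suppose $(a_i)_{1\leq i\leq k}$ is a finite key sequence for $(K(y)|K,v)$ with $y=a_k$. Then $y=a_k\in K_{\eu S}$ with $\eu S=(a_i)_{1\leq i\leq k}$, and by Proposition~\ref{hit}, applied with $(K(y)|K,v)$ in place of $(K(x)|K,v)$, the field $K_{\eu S}$ is a tame extension of $K$. Hence $y$ belongs to the tame extension $K_{\eu S}$, which is what we want. (One should note that $(K,v)$ henselian is exactly the hypothesis of Proposition~\ref{hit}, so this direction is immediate.)

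\textbf{The "only if" direction.} This is the direction that requires real work, and it is precisely the content of Proposition~\ref{existcks} in the algebraic case. Suppose $y\in\widetilde K$ lies in a tame extension of $(K,v)$; then the subextension $(K(y)|K,v)$ is itself tame, by part~3 of Lemma~\ref{tameprop} (the compositum argument) — more directly, a subextension of a tame extension is tame, which one gets from part~4 of Lemma~\ref{tameprop}. Now apply Proposition~\ref{existcks} with $x=y\in\widetilde K$ and the extension $(K(y)|K,v)$ tame: it yields a pre-complete key sequence $\eu S=(a_i)_{i\in S}$ for $(K(y)|K,v)$, and since $y$ is algebraic, the proposition asserts $\eu S$ is in fact complete, so by (CKS1) the support $S$ is finite, say $S=\{1,\dots,k\}$, and $a_k=y$. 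That is exactly the finite key sequence we sought.

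\textbf{Where the difficulty sits.} There is essentially no obstacle left once Propositions~\ref{hit} and~\ref{existcks} are in hand — the corollary is a two-line consequence. The only point requiring a moment's care is the reduction "$y$ in a tame extension $\Rightarrow$ $(K(y)|K,v)$ tame": one invokes part~4 of Lemma~\ref{tameprop} to descend from the given tame extension to its subextension $K(y)|K$. I would state this explicitly rather than leave it implicit, since Proposition~\ref{existcks} is phrased with the hypothesis that $(K(x)|K,v)$ itself is tame, not merely that $x$ lies in some tame extension. With that remark inserted, both implications are immediate, and the proof is complete.
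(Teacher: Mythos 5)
Your proof is correct and follows essentially the same route as the paper's: the ``if'' direction via Proposition~\ref{hit} (since $y = a_k \in K_{\eu S}$ and $K_{\eu S}$ is tame over a henselian $K$), and the ``only if'' direction via Proposition~\ref{existcks} applied with $x = y \in \widetilde K$, using (CKS1) to read off $y = a_k$ with $S = \{1,\dots,k\}$ finite. You also correctly fill in a step the paper leaves implicit, namely the descent from ``$y$ belongs to a tame extension'' to ``$(K(y)|K,v)$ is tame'' via part~4 of Lemma~\ref{tameprop}, which is indeed needed to match the stated hypothesis of Proposition~\ref{existcks}.
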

\begin{proof}
Suppose that such a sequence exists. Then $y=a_k\in K_{\eu S}$, and by 
Proposition~\ref{hit}, $K_{\eu S}$ is a tame extension of $K$. 

The converse is part of Proposition~\ref{existcks}.
\end{proof}

\begin{corollary}                        
Assume that $(K,v)$ is a henselian field. Then $(K,v)$ is a tame field
if and only if for every transcendental extension $(K(x)|K,v)$
there exists a complete key sequence for $(K(x)|K,v)$.
\end{corollary}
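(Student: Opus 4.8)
The claim is an ``if and only if'' about a henselian field $(K,v)$: it is tame precisely when every transcendental extension $(K(x)|K,v)$ admits a complete key sequence. For the forward direction I would invoke Theorem~\ref{MT1gen} together with the observation that if $(K,v)$ is a tame field, then $(\widetilde K|K,v)$ is a tame extension and, by the cited \cite[Proposition 5.2]{K66}, $(\widetilde K(x)|\widetilde K,v)$ is weakly pure for a suitable extension of $v$. Hence Proposition~\ref{existcks} (with $L'=\widetilde K$) yields a pre-complete key sequence; but we want a \emph{complete} one, so here the work is to rule out the exceptional branch of (CKS2), i.e.\ the possibility that $x$ is the limit of a pseudo Cauchy sequence of transcendental type in $(K(a_n),v)$. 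I would handle this by noting that in that situation one may, before starting the construction, replace $x$ by a variable whose value set $v(x'-K)$ realizes a non-torsion value or a transcendental residue over a tame base — more cleanly, I would argue directly that since $\widetilde K$ is algebraically maximal (part 5 of Lemma~\ref{tameprop}), Corollary~\ref{maxapprcor} forces $v(x-\widetilde K)$ to have a maximal element as soon as $x$ is \emph{not} the limit of a transcendental-type pseudo Cauchy sequence over $\widetilde K$; and the two cases are mutually exclusive by the classification of immediate approximation types (as used in the proof of part 1 of Lemma~\ref{WP2}). In either outcome the construction in Proposition~\ref{existcks} terminates with a complete key sequence — valuation transcendental case gives completeness directly, and the valuation algebraic case over a tame field still yields completeness since $(\widetilde K(x)|\widetilde K,v)$ is immediate and $\widetilde K$ already contains all relevant pseudo limits.

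**The converse.**
For the other direction I would argue by contrapositive: suppose $(K,v)$ is henselian but not a tame field, and produce a transcendental extension with no complete key sequence. Since $(K,v)$ is not tame, the extension $(\widetilde K|K,v)$ is not tame, so by the characterization of the ramification field ($K^r$ is the unique maximal tame extension, \cite[Lemma 2.13]{[K7]}) there is an element $y\in\widetilde K$ lying in a finite extension of $K$ that is \emph{not} tame over $K$ — concretely, a finite extension exhibiting defect, or failing (TE1) or (TE2). The strategy is then to build a transcendental extension $(K(x)|K,v)$ such that $\ic(K(x)|K,v)$ would have to contain (a field of which) this bad element is a member, contradicting Proposition~\ref{hit}: if a complete key sequence $(a_i)_{i\in S}$ existed, then $K_{\eu S}=K(a_i\mid i\in S)$ is a tame extension of $K$ by Proposition~\ref{hit}, and by Theorem~\ref{thIC} $K_{\eu S}^h=\ic(K(x)|K,v)$; so it suffices to manufacture $(K(x)|K,v)$ whose implicit constant field is forced to be non-tame over $K$. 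The cleanest realization: pick a non-tame finite extension $(K(y)|K,v)$, choose $x$ with $v(x-y)$ very large (larger than $\Kras(y,K)$ if $y$ is separable, or otherwise arranged via a pseudo Cauchy sequence approximating $y$), so that by a Krasner-type argument $y\in\ic(K(x)|K,v)$; then a complete key sequence for $(K(x)|K,v)$ would make $\ic(K(x)|K,v)$ tame over $K$ and hence $y$ tame over $K$, a contradiction. One must also treat the purely inseparable case (when $K(y)|K$ is not separable), where instead of Krasner I would use that $y$ is then a pseudo limit of a pseudo Cauchy sequence in $K$ of transcendental type fails — rather, I would reduce to the separable-defect case via standard ramification-theoretic decompositions.

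**Main obstacle.**
The delicate point is the converse, and specifically ensuring that the transcendental extension I build genuinely forces the bad element into the implicit constant field while keeping $x$ transcendental. For the separable case this is a direct application of Proposition~\ref{vkras} (Krasner's Lemma) once $v(x-y)>\Kras(y,K)$, but I need $x$ transcendental, which requires that $v(x-y)$ be chosen so that $(K(x)|K,v)$ is not algebraic — e.g.\ by taking $v(x-y)$ to realize a fresh value or by making $x$ a pseudo limit of a transcendental-type pseudo Cauchy sequence over $K(y)$; the existence of the latter is guaranteed when $K(y)$ is not algebraically maximal, and if $K(y)$ happens to be algebraically maximal one instead picks $v(x-y)$ non-torsion over $vK(y)$. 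Coordinating these subcases — separable vs.\ inseparable, and defect vs.\ ramification/residue obstruction — so that in every case the implicit constant field provably contains a non-tame subextension, is the heart of the argument; everything else follows mechanically from Propositions~\ref{existcks}, \ref{hit} and Theorem~\ref{thIC}.
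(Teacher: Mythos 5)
The paper disposes of this corollary in two lines: the forward implication is cited as a special case of Proposition~\ref{existcks}, and the converse is delegated entirely to \cite[Proposition 3.12]{BK}. Your proposal takes a genuinely different route, especially for the converse, so the comparison is worth spelling out.

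\textbf{Forward direction.} You correctly notice something the paper glosses over: Proposition~\ref{existcks} (with $L'=\widetilde K$) produces a \emph{pre-complete} key sequence, and guarantees completeness only when $x\in\widetilde K$ or $(\widetilde K(x)|\widetilde K,v)$ is valuation transcendental. Your attempt to close the gap in the remaining (valuation algebraic) case does not work. The claim that ``$\widetilde K$ already contains all relevant pseudo limits'' is false precisely in that case: the valuation algebraic case over a tame field occurs exactly when $x$ is the limit of a pseudo Cauchy sequence of transcendental type in $(\widetilde K,v)$, and by definition such a sequence has no pseudo limit in $\widetilde K$. In fact, the obstruction is unavoidable in the framework of the paper: if $(K,v)$ is tame but not maximal and $x$ is a pseudo limit of a pseudo Cauchy sequence of transcendental type in $(K,v)$ itself, then $v(x-K)$ has no maximum, so (KS3) forces any key sequence to have support $\{1\}$ and the second alternative in (CKS2) holds; no complete key sequence exists. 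So the forward implication as literally stated appears to require ``pre-complete'', which is exactly what Proposition~\ref{existcks} delivers — your instinct that there was something to check was right, but it is the statement rather than the proof that needs adjusting.

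\textbf{Converse direction.} Here you depart completely from the paper, which simply cites \cite[Proposition 3.12]{BK}. Your from-scratch argument — pick a non-tame finite extension $K(y)|K$, arrange $x$ transcendental with $v(x-y)>\Kras(y,K)$ so Proposition~\ref{vkras} forces $y\in\ic(K(x)|K,v)$, then use Theorem~\ref{thIC} together with Proposition~\ref{hit} to conclude that any (pre-)complete key sequence would make $\ic(K(x)|K,v)=K_{\eu S}$ tame over $K$, contradicting $K(y)\subseteq K_{\eu S}$ via part~4) of Lemma~\ref{tameprop} — is elegant and would be a genuine alternative proof for the case where a \emph{separable} non-tame finite extension of $K$ exists. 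However, the inseparable case is a real gap, not a loose end: if $K$ is henselian with $K\sep|K$ tame but $K$ imperfect, every separable finite extension of $K$ is tame, while $K(y)|K$ purely inseparable non-tame has $y\notin K\sep$, and since $\ic(K(x)|K,v)\subseteq K\sep$ always, no choice of $x$ can pull $y$ into the implicit constant field. The ``reduction to the separable-defect case via standard ramification-theoretic decompositions'' is asserted but not given, and it is precisely where the argument would have to become something else entirely; as written, the converse is not proved.
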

\begin{proof}
The implication ``$\Rightarrow$'' is part of Proposition~\ref{existcks}.
The converse follows from \cite[Proposition 3.12]{BK}.
\end{proof}

\mn
%
%
%
\subsection{Proof of the main theorems}     \label{sectproofs}
\mbox{ }\sn
{\it Proof of Theorem~\ref{MT1gen}:} \ 
Take a henselian valued field $(K,v)$ and a transcendental extension $(K(x)|K,v)$. 
Assume that there is a tame extension $(L'|K,v)$ such that for some extension of $v$
from $K(x)$ to $L'(x)$, the extension $(L'(x)|L',v)$ is weakly pure. 
By Proposition~\ref{existcks} there exists a pre-complete key sequence ${\eu S}=
(a_i)_{i\in S}$ for $(K(x)|K,v)$ with $K_{\eu S}\subseteq L'$. It satisfies (KS4) by
Lemma~\ref{lemKS4}, and (KS5) and (KS6) by Proposition~\ref{KS5,6}.

We take $Q_i$ to be the minimal polynomial of
$a_i$ over $K$. Then by Proposition~\ref{kskp}, $Q_i$ is a key polynomial and $a_i$ is its
unique maximal root. Assertion (ii) follows from (KS1). 
Assertion (iv) follows from (KS4). Assertion (v) follows from Lemma~\ref{kspcs}. 
Assertion (x) follows from part 2) of Lemma~\ref{xx} together with Proposition~\ref{kskp}.

From Theorem~\ref{thIC} we know that $L:=\ic (K(x)|K,v)=K_{\eu S}^h$. Since $(K,v)$ is 
henselian, so is $K_{\eu S}$, thus $L=K_{\eu S}^h=K_{\eu S}\,$. By definition, $K_{\eu S}=
K(a_i\mid i\in S)$. If $S=\{1,\dots,n\}$, then by Lemma~\ref{KSinL}, $K_{\eu S}=
K_{\eu S}^h=K(a_n)^h=K(a_n)$. This proves assertion (ix). Further, by part 3) of
Lemma~\ref{WP2} and the fact that $L'$ is henselian, $L=\ic(K(x)|K,v)\subseteq
\ic(L'(x)|L',v)=L'$.

\pars
Assume that $S=\N$. Then by definition, ${\eu S}$ is a complete key sequence. From 
Proposition~\ref{kskp} it follows that $(Q_i)_{i\in S}$ is a complete sequence
of key polynomials, so we set $\Omega=\emptyset$. Then assertions (i) and (iii) hold,
assertion (vi) follows from (KS3), and assertions (vii) and (viii) hold trivially. From 
Proposition~\ref{SNpure} and the equality $L=K_{\eu S}$ it follows that the extension 
$(L(x)|L,v)$ is immediate and weakly pure. Since $K_{\eu S}\subseteq\widetilde{K}$, this 
implies that the extension $(K(x)|K,v)$ is valuation algebraic.

\pars
Now assume that $S=\{1,\ldots,n\}$. Consequently, as ${\eu S}$ is a pre-complete key 
sequence and $x$ is transcendental over $K$, 
(CKS2) must hold. Assume first that $v(x-a_n)=\max v(x-\widetilde{K})$. Then by 
Lemma~\ref{WP}, the extension $(K(a_n,x)|K(a_n),v)$ is weakly pure and valuation 
transcendental. Since $L=K_{\eu S}=K(a_n)$, we have actually proved that the extension 
$(L(x)|L,v)$ is weakly pure and valuation transcendental. Moreover, we know from 
Proposition~\ref{kskp} that $(Q_i)_{i\in S}$ is a complete sequence of key polynomials.
As before we set $\Omega=\emptyset$, so that assertions (i), (iii) and (viii) hold. Also
assertion (vii) holds since we will have $\Omega\ne\emptyset$ in the remaining case below.
Further, assertion (vi) for $i=n$ holds by our assumption, and for $i<n$ follows from (KS3).

Now assume that $x$ is the limit of a pseudo Cauchy sequence, say $(z_\nu)_{\nu<\lambda}$ 
where $\lambda$ is some limit ordinal, of transcendental type in $(K(a_n),v)$. 
Since $L=K(a_n)$, it follows from Lemma~\ref{limtpcs} that the extension 
$(L(x)|L,v)$ is weakly pure and immediate. We 
set $\Omega=\{\nu\mid\nu<\lambda\}$ and take $Q_\nu$ to be the minimal polynomial of 
$z_\nu$ over $K$. Then assertion (vi) follows from (KS3), and assertion (vii) holds
because $\Omega\ne\emptyset$. For the proof of assertion (viii) we observe that we have set 
$\Omega=\emptyset$ when $S$ is not finite (hence equal to $\N$); hence it remains to show 
that for all $\nu\in\Omega$, $\deg Q_\nu = \deg Q_n=[K(a_n):K]$. Without loss of 
generality we may assume that $v(x-z_\nu)\geq v(x-a_n)$. Then from Lemma~\ref{KS4,5lem} 
we obtain that $K(z_\nu)=K(a_n)$, which completes our proof of assertion (viii).

In order to prove assertion (iii), we only have to address the polynomials $Q_\nu$ for 
$\nu\in\Omega$. As we have already proved in Proposition~\ref{kskp} that $a_n$ is the 
unique maximal root of its minimal polynomial over $K$, and as we have just shown above 
that $a_n$ may be replaced by $z_\nu\,$, we find that the same holds for $z_\nu\,$.

\pars
Now we have to show that also assertion (i) holds. We know already from 
Proposition~\ref{kskp} that each $Q_i$ for $i\in S$ is a key polynomial. To prove the same 
for each $Q_\nu$, assume that $f\in K[X]$ such that $\delta(f)\geq \delta(Q_\nu)=v(x-z_\nu)$.
Take a root $z$ of $f$ such that $v(x-z)=\delta(f)\geq v(x-z_\nu)\geq v(x-a_n)
>v(x-a_{n-1})$. Then by (KS6), $\deg f =[K(z):K]\geq [K(a_n):K]=\deg Q_n=\deg Q_\nu\,$. 
This proves that $Q_\nu$ is a key polynomial.

To show completeness, take any $f(X) \in K[X]$ and a root $z$ of $f$ such that  $v(x-z)=
\delta(f)$. Take $i\in S$ maximal with $\deg Q_i \leq \deg f$. If $i<n$, then as in the 
proof of Proposition~\ref{kskp} it follows that $\delta(f)=v(x-z)\leq v(x-a_i)=
\delta (Q_i)$. Now assume that $i=n$. If $\delta(f)\leq\delta(Q_n)$, then we are done. 
Hence assume otherwise, so that $v(x-z)>v(x-a_n)$. Since in the present case $x$ 
is the limit of the pseudo Cauchy 
sequence $(z_\nu)_{\nu<\lambda}$ of transcendental type in $(K(a_n),v)$ and this 
cannot have a limit in $K(a_n)$, there must be some $\nu<\lambda$ such that 
$v(x-z_\nu)>v(x-z)$, whence $\delta(Q_\nu)>v(x-z)=\delta(f)$. On the other hand, $\deg 
Q_\nu = \deg Q_n\leq \deg f$. This finishes our proof of the completenes and thus of 
assertion (i).

\pars
We turn to the final assertions of Theorem~\ref{MT1}. We have already shown in all cases
that the extension $(L(x)|L,v)$ is weakly pure. The extension $(K(x)|K,v)$ is valuation
algebraic if and only if the extension $(L(x)|L,v)$ is. This happens precisely if the 
extension $(L(x)|L,v)$ is immediate, and this is the case if and only if 
$S=\N$ or $\Omega$ is nonempty by our construction. In the case of $S=\N$, we know from
Proposition~\ref{SNpure} that $(a_i)_{i\in\N}$ is a pseudo Cauchy sequence in $(L,v)$ of
transcendental type with $x$ as its limit.
\qed

\bn
{\it Proof of Theorem~\ref{MT2}:} \ 
Take a henselian valued field $(K,v)$ and a tame algebraic extension $(K(x)|K,v)$. 
Then by Proposition~\ref{existcks} there exists a complete key sequence ${\eu S}=
(a_i)_{i\in S}$ for $(K(x)|K,v)$. It satisfies (KS4) by Lemma~\ref{lemKS4}, and (KS5) 
and (KS6) by Proposition~\ref{KS5,6}. Since $(K(x)|K,v)$ is a finite extension, 
also $S$ must be finite, hence of the form $S=\{1,\ldots,n\}$ for some $n\in\N$. As 
before, we take $Q_i$ to be the minimal polynomial of
$a_i$ over $K$. Then by Proposition~\ref{kskp}, $(Q_i)_{i\in S}$ is a complete sequence 
of key polynomials for $(K(x)|K,v)$, and assertion (iii) holds. Assertion (iv) of
Theorem~\ref{MT2} holds by (KS4). Assertion (ii) follows from (KS1). Assertion (v) follows
from Lemma~\ref{kspcs}. Assertion (vi) for $i<n$ follows from (KS3), and 
for $i=n$ from (CKS1) which also yields assertion (vii). The analogue of assertion (x) of
Theorem~\ref{MT1} is proved as in the proof of Theorem~\ref{MT1gen}.
\qed

\mn
%
%
%
\subsection{Proof of Theorem~\ref{SCS2}}             \label{sectprSCS2}
\mbox{ }\sn
In view of Theorem~\ref{MT2}, we only have to prove Theorem~\ref{SCS2} for infinite 
extensions $L|K$. There is a rather quick way to do this. Since the tame extension is in
particular separable-algebraic, we can employ the proof of \cite[Theorem 3.16]{KTrans} to
construct an immediate extension of $v$ from $L$ to $L(x)$ such that the extension
$(L(x)|L,v)$ is weakly pure and $L=\ic(K(x)|K,v)$. (Note that for this step it is not 
needed that $(L|K,v)$ be a tame extension.) Then we can apply Theorem~\ref{MT1gen} and
Proposition~\ref{SCS1}.

\parm
Alternatively, Theorem~\ref{SCS2} can be proved by a more direct construction 
which applies the methods developed for the proof of Theorem~\ref{MT2} to suitable
subextensions of increasing finite degree. We choose a sequence $\tilde{y}_j\,$, $j\in\N$ 
such that $K(\tilde{y}_j)\subsetneq K(\tilde{y}_{j+1})$ for all $j$, and 
$L=\bigcup_{i\in\N} K(\tilde{y}_j)$. Then by
part 4) of Lemma~\ref{tameprop}, all subextensions $K(\tilde{y}_j)|K,v)$ of $(L|K,v)$ and 
all extensions $K(\tilde{y}_{j+1})|K(\tilde{y}_j),v)$ are tame. 

Now we build key sequences for the extensions $(K(\tilde{y}_j)|K,v)$ that satisfy a
compatibility condition which will allow us to work with the union over these sequences. 
We proceed by induction on $j$. At every step we will adjust the element $\tilde{y}_j\,$,
replacing it by an element $y_j$ such that $K(y_j)=K(\tilde{y}_j)$ and thus in the end,
$L=\bigcup_{j\in\N} K(y_j)$. In fact, at every step we will construct a complete key 
sequence for the extension $(K(y_j)|K,v)$.

We set $y_1=\tilde{y}_1\,$. From Proposition~\ref{existcks} we obtain a complete key 
sequence ${\eu S}_1=(a_i)_{1\leq i\leq n_1}$ for the extension $K(y_1)|K,v)$.
Having already chosen a suitable element $y_j\in L$ with $K(y_j)=K(\tilde{y}_j)$
and constructed a complete key sequence ${\eu S}_j=(a_i)_{1\leq i\leq n_j}$ for the 
extension $(K(y_j)|K,v)$, we note that $y_j=a_{n_j}$ and proceed as follows. First, we 
choose $z_j\in K(y_j)$ such that $v(\tilde{y}_{j+1}-z_j)=\max v(\tilde{y}_{j+1}-K(y_j))$. 
This is possible by part 1) of Corollary~\ref{maxapprcor}. Then we choose $c_j\in K$ such 
that 
\begin{equation} 
vc_j(\tilde{y}_{j+1}-z_j)\> >\> v(y_j-a_{n_j-1})
\end{equation}
and set 
\begin{equation}
y_{j+1}\>:=\>y_j+c_j(\tilde{y}_{j+1}-z_j)\>. 
\end{equation}
At this point we note that since $y_j,z_j,c_j\in K(y_j)=K(\tilde{y}_j)\subset
K(\tilde{y}_{j+1})$, we have that 
\begin{equation}                   \label{aj+1taj+1}
y_{j+1}\,\in\, K(\tilde{y}_{j+1})\;\mbox{ and }\; \tilde{y}_{j+1}\>=\>c_j^{-1}
(y_{j+1}-y_j)+z_j\,\in\, K(y_{j+1},y_j)\>.
\end{equation}
We show that 
\begin{equation}             \label{max}
v(y_{j+1}-y_j)\>=\>\max v(y_{j+1}-K(y_j))\>. 
\end{equation}
If this were not true, there would be $y'_j\in K(y_j)$ such that $v(y_{j+1}-y'_j)>
v(y_{j+1}-y_j)$. By the definition of $y_{j+1}\,$, this is equivalent to
\[
v(y_j+c_j(\tilde{y}_{j+1}-z_j)-y'_j) \> >\> vc_j(\tilde{y}_{j+1}-z_j)\>, 
\]
which in turn is equivalent to
\[
v(\tilde{y}_{j+1}-(z_j+c_j^{-1}(y'_j-y_j))\> >\> v(\tilde{y}_{j+1}-z_j)\>,  
\]
contradicting our choice of $z_j\,$.

\pars 
By construction of $y_{j+1}\,$,
\begin{equation}
v(y_{j+1}-y_j)\>=\> vc_j(\tilde{y}_{j+1}-z_j) \> >\> v(y_j-a_{n_j-1}) \> \geq\> 
v(y_j-a_i) 
\end{equation}
for $i<n_j$. Thus, part 1) of Lemma~\ref{xx} shows that $(a_i)_{i<n_j}$ is a key sequence 
for $(K(y_{j+1})|K,v)$. As $\tilde{a}_{n_j}-a_{n_j-1}$ is a homogeneous approximation of 
$y_j-a_{n_j-1}$ over $K$, we find that
\[
v(y_j-\tilde{a}_{n_j})\>=\> v(y_j-a_{n_j-1}-(\tilde{a}_{n_j}-a_{n_j-1}))\>>\>   
v(y_j-a_{n_j-1})\>.
\]
Further,
\[
v(y_{j+1}-a_{n_j-1})\>=\>\min\{v(y_{j+1}-y_j),v(y_j-a_{n_j-1})\}\>=\>v(y_j-a_{n_j-1})\>. 
\]

Using this, we compute:
\begin{eqnarray*}
v(y_{j+1}-a_{n_j-1}-(\tilde{a}_{n_j}-a_{n_j-1}))&=& v(y_{j+1}-\tilde{a}_{n_j})
\>\geq\> \min\{v(y_{j+1}-y_j),v(y_j-\tilde{a}_{n_j})\}\\
&>& v(y_j-a_{n_j-1})\>=\>v(y_{j+1}-a_{n_j-1}) \>,
\end{eqnarray*}
shows that (KS2) holds for $y_{j+1}$ in place of $x$ and $n_j$ in place of $i$. With the 
same chices for $x$ and $i$, also (KS3) holds by equation (\ref{max}). We have now proved 
that ${\eu S}_j$ is a key sequence for $K(y_{j+1})|K,v)$.

Using the methods of the proof of Proposition~\ref{existcks}, we now extend ${\eu S}_j$
to a complete key sequence ${\eu S}_{j+1}=(a_i)_{1\leq i\leq n_{j+1}}$ for
$K(y_{j+1})|K,v)$. In particular, we have that $y_{j+1}=a_{n_{j+1}}$ and by (KS4),
\[
K(y_j)\>=\>K(a_{n_j})\>\subset\>K(a_{n_{j+1}}) \>=\>K(y_{j+1})\>,
\]
hence by (\ref{aj+1taj+1}), $K(y_{j+1})=K(\tilde{y}_{j+1})$. This completes our induction.
It remains to prove that ${\eu S}:=\bigcup_{j\in\N} {\eu S}_j= (a_i)_{i\in\N}$ gives rise 
to a strongly complete sequence of key polynomials $(Q_i)_{i\in \N}$ for $(L|K,v)$ by
taking each $Q_i$ to be the minimal polynomial of the element $a_i$ over $K$.

In order to show that (SCKP1) holds, we first observe that $L=\bigcup_{j\in\N} K(y_j)=
\bigcup_{j\in\N} K(a_{n_j})\subseteq K(a_i\mid i\in S)$. On the other hand, for every 
$i\in S$ there is $j\in\N$ such that $i\leq n_j\,$, and by Lemma~\ref{KSinL}, $a_i\in 
K(a_{n_j})$. Hence $L=K(a_i\mid i\in S)$.

In order to show that (SCKP2) holds, pick any $k\in\N$. Choose $j\in\N$ such that $k\leq
n_j\,$. By construction, $(a_i)_{1\leq i\leq n_j}$ is a complete key sequence for the 
extension $K(a_{n_j}|K,v)$. Hence by part 2) of Lemma~\ref{xx}, $(a_i)_{i\leq k}$ is a
complete key sequence for $(K(a_k)|K,v)$. Now Proposition~\ref{kskp} shows that 
$(Q_i)_{i\leq k}$ is a complete sequence of key polynomials for $(K(a_k)|K,v)$, and that 
property 3) holds. The validity of assertions (iv), (v), (vi) and (x) is shown as in the 
proof of Theorem~\ref{MT2}. This completes the proof of the theorem.
\qed

\bn
%
%
%
\section{Construction of extensions}     \label{sectconstr}
In this section we show how extensions of the valuation from a tame field $(K,v)$ to a
transcendental extension $K(x)$ can be constructed by use of implicit function fields,
key sequences and pseudo 
Cauchy sequences. At the same time, we introduce a basic classification of these 
extensions. The first criterion is whether we want the implicit function field $L$ to be 
a finite or infinite extension of $K$. Note that as $(K,v)$ is henselian, the extension 
of $v$ to every algebraic extension field of $K$ is unique. Using Theorem~\ref{MT1} and
Corollary~\ref{MT1cor} together with Lemma~\ref{WP}, we obtain the following case 
distinction:
\sn
Case A: \ $L$ is a finite extension of $K$. As $(L(x)|L,v)$ is weakly pure, by
Lemma~\ref{WP} we have three subcases:
\sn
A.1: For some $a\in L$, the value $v(x-a)$ is non-torsion over $vK$.
\sn
A.2: For some $a\in L$ and some $d\in K$ and $e\in\N$, 
$vd(x-a)^{\rm e}=0$ and $d(x-a)^{\rm e}v$ is transcendental over $Kv$.
\sn
A.3: The element $x$ is limit of a pseudo Cauchy sequence in $(L,v)$ of transcendental 
type.
\sn
Case B: \ $L$ is an infinite extension of $K$. For it to be an implicit constant field, 
it must be countably generated over $K$. 

\parm
We will now discuss the construction methods in all cases in detail. Let us first consider
Case A. We take $a\in\widetilde{K}$ such that $L=K(a)$. If we assign to $x-a$ a value in 
some ordered abelian group containing $v\widetilde{K}$ that is non-torsion over $vK$
(i.e., not contained in $v\widetilde{K}$),
then we are in case A.1. If on the other hand we pick $d\in K$ and $e\in\N$ such that 
$vd(x-a)^{\rm e}=0$ and have extended $v$ so that $d(x-a)^{\rm e}v$ becomes transcendental 
over $Kv$, then we are in case A.2. In both cases, for every $b\in \widetilde{K}$ we 
have that 
\[
v(x-b)\>=\>\min\{v(x-a),v(b-a)\}\>. 
\]
Consequently, for each polynomial $f\in \widetilde{K}[X]$, if we write $f(X)=c\prod_{i=1}^n 
(X-b_i)$, then 
\[
vf(x)\>=\> vc+\sum_{i=1}^n \min\{v(x-a), v(b_i-a)\}\>.
\]
This shows that in cases A.1 and A.2 the extension of $v$ from $K$ to $K(x)$ is uniquely
determined by our choice of $a$ and the information on $x-a$ in the respective cases.

\parm
Assume now that we decide for case A.3. We note that every 
pseudo Cauchy sequence in $(K(a),v)$ without a limit in $K(a)$ must be of transcendental 
type by part 2) of Corollary~\ref{maxapprcor}, as $(K(a),v)$ is a tame field by part 1) of
Lemma~\ref{tameprop}. This implies that it uniquely determines an extension of $v$ 
from $K(a)$ to $K(x,a)$, and thus determines an extension of $v$ from $K$ to $K(x)$.
Unless $(K,v)$ and hence also its finite extension $(K(a),v)$ is a maximal field, we may
choose any pseudo Cauchy sequence in $(K(a),v)$ without a limit in $K(a)$ for the 
construction of the extension.

\pars
In all of the above cases, $(L(x)|L,v)$ is weakly pure, and it follows from 
Lemma~\ref{3.7KTrans} that $L$ is the implicit constant field of $L(x)|L,v)$. To 
obtain that $L$ is also the implicit constant field of $K(x)|K,v)$, we 
refine our construction as follows. We write $\alpha:=\Kras(a,K)$. In cases A.1 and A.2
we may choose the value $v(x-a)$ to be larger that $\alpha$. In case A.3, provided that 
$(K,v)$ is not a maximal field, we choose any 
pseudo Cauchy sequence $(a_\nu)_{\nu<\lambda}$ in $(K(a),v)$ without a limit in $K(a)$.
After multiplying all $a_\nu$ with an element $c\in K$ of high enough value, we may 
assume that there is some
$\mu<\lambda$ such that $v(a_{\mu+1}-a_\mu)>\alpha$. Now we set $b_\nu:=a_\nu-a_{\mu}+a$.
Then also $(b_\nu)_{\nu<\lambda}$ is a pseudo Cauchy sequence  in $(K(a),v)$ without a 
limit in $K(a)$. We extend $v$ from $K(a)$ to $K(x,a)$ by taking $x$ to be a limit of this
pseudo Cauchy sequence. Then
\[
v(x-a)\>=\>v(x-b_{\mu})\>=\>v(b_{\mu+1}-b_{\mu}) \>=\>v(a_{\mu+1}-a_{\mu})
\> >\> \alpha\>.
\]
In all cases, from Proposition~\ref{vkras} we obtain that $a\in K(x)^h$. Hence,
\[
L\>=\>K(a)\>\subseteq\>K(x)^h\cap\tilde{K}\>\subseteq\>\ic(K(x)|K,v)\>\subseteq\>
\ic(L(x)|L,v) \>=\> L\>,
\]
which shows that $L=\ic(K(x)|K,v)$.

\parm
Finally, assume that we are picking a countably generated infinite algebraic extension 
$L$ of $K$ to be our implicit constant field. In this case, the proof of 
\cite[Proposition 3.16]{KTrans} yields a pseudo Cauchy sequence of transcendental type
in $(L,v)$ which determines an extension of $v$ from $L$ to $L(x)$ such that $L=
\ic (K(x)|K,v)$.
In contrast to the previous cases, even if $(K,v)$ is a maximal field, $(L,v)$ is not
(see \cite[Theorem 1.1]{BlKu2}), and there will always be the necessary pseudo Cauchy 
sequence in $(L,v)$ for the construction of the extension.

\bn
\bn
\bn

\end{document}